\documentclass[preprint,12pt]{elsarticle}

\usepackage{amsmath,amssymb,amsthm,mathtools}
\usepackage[utf8]{inputenc}
\usepackage{lmodern}
\usepackage[english]{babel}
\usepackage{microtype}
\usepackage{hyperref}
\hypersetup{hidelinks}
\usepackage{xcolor}

\providecommand{\Subset}{\mathrel{\subset\!\!\subset}}

\allowdisplaybreaks

\newtheoremstyle{ptheorem}{1em}{0em}{\itshape}{}{\bfseries}{.}{.5em}{\thmname{#1}\thmnumber{
		#2}\thmnote{ (\hspace{-1sp}{#3})}}
\theoremstyle{ptheorem}
\newtheorem{thm}{Theorem}[section]
\newtheorem{pro}[thm]{Proposition}
\newtheorem{lem}[thm]{Lemma}
\newtheorem{cor}[thm]{Corollary}

\newtheoremstyle{hdef}{1em}{0em}{}{}{\bfseries}{.}{.5em}{\thmname{#1}\thmnumber{
		#2}\thmnote{ (\hspace{-.01pt}{#3})}}
\theoremstyle{hdef}
\newtheorem{dfn}[thm]{Definition}
\newtheorem{rem}[thm]{Remark}
\newtheorem{exa}[thm]{Example}

\numberwithin{equation}{section}

\begin{document}

\selectlanguage{english}

\begin{frontmatter}

\title{Aubin--Lions Compactness for Stieltjes--Bochner Evolution Graphs}

\author[usc,citmaga]{Francisco J. Fern\'andez\corref{cor1}\fnref{orcid1}}
\ead{fjavier.fernandez@usc.es}

\cortext[cor1]{Corresponding author.}
\fntext[orcid1]{ORCID: \href{https://orcid.org/0000-0002-0667-2639}{0000-0002-0667-2639}.}

\affiliation[usc]{
  organization={Departamento de Estat\'istica, An\'alise Matem\'atica e Optimizaci\'on,
    Facultade de Matem\'aticas, Universidade de Santiago de Compostela},
  addressline={Campus Vida},
  city={Santiago de Compostela},
  postcode={15782},
  country={Spain}}

\affiliation[citmaga]{
  organization={Galician Center for Mathematical Research and Technology (CITMAga)},
  city={Santiago de Compostela},
  postcode={15782},
  country={Spain}}

\begin{abstract}
Let $g$ be a nondecreasing left-continuous function and let $\mu_g$ be its
Lebesgue--Stieltjes measure. We establish a Banach-valued fundamental theorem
of calculus and an Aubin--Lions compactness theorem for evolution measured by
$\mu_g$, allowing absolutely continuous, singular continuous and atomic
components. If the range space has the Radon--Nikod\'ym property, a curve is
$g$-absolutely continuous if and only if it is an indefinite Bochner
integral; its strong $g$-derivative is the Bochner density and the variation
measure has density equal to its norm. A terminal atom may make the derivative
invisible from the Bochner state class, so the natural evolution space is a
graph of state--derivative pairs. For
$B_0\Subset B\hookrightarrow B_1$, with $B_0$ and $B_1$ reflexive and
$1<p_0,p_1<\infty$, the state projection is a compact linear operator into
$L_g^{p_0}([a,b);B)$. The proof combines bounded evaluation at atoms, local
$\mu_g$-averages at nonatomic points and Ehrling's inequality. The result
recovers the classical theorem and yields compactness principles for bounded
time scales, weighted sequences and mixed Stieltjes measures.
\end{abstract}

\begin{keyword}
Stieltjes derivative \sep Stieltjes--Bochner spaces \sep
Aubin--Lions compactness \sep vector measures \sep
Radon--Nikod\'ym property \sep time scales \sep impulsive evolution

\MSC[2020] 28B05 \sep 46G10 \sep 46E40 \sep 47B38 \sep 35D30
\end{keyword}

\gdef\useauthors{Francisco J. Fernandez;}

\end{frontmatter}

\section{Introduction}\label{sec:introduction}

The Aubin--Lions lemma is a basic compactness tool for evolution equations.
For a Banach triple
\begin{displaymath}
 B_0\Subset B\hookrightarrow B_1,
\end{displaymath}
it converts spatial control in $L^{p_0}(a,b;B_0)$ and temporal control in
$L^{p_1}(a,b;B_1)$ into strong compactness in $L^{p_0}(a,b;B)$. We refer to
\cite{Aubin1963,Lions1969} for the original results and to \cite{Simon1987}
for a systematic treatment. Related compactness mechanisms have been
developed for integral equicontinuity and tightness \cite{RossiSavare2003},
evolving Banach spaces \cite{AlphonseEtAl2023}, nonlocal fractional
derivatives \cite{LiLiu2018}, and mesh-dependent discrete reconstructions
\cite{Gomez2026}. The present paper considers a different extension: time is
measured by a fixed Lebesgue--Stieltjes measure that may contain continuous,
singular and atomic components simultaneously.

Let $g:\mathbb R\to\mathbb R$ be nondecreasing and left-continuous, and let
$\mu_g$ be determined by
\begin{displaymath}
 \mu_g([c,d))=g(d)-g(c),
 \qquad c<d.
\end{displaymath}
A right jump of $g$ produces an atom of $\mu_g$, a plateau has zero
$\mu_g$-mass, and the continuous part of $\mu_g$ may be singular with respect
to Lebesgue measure. Thus a single derivator can encode ordinary evolution,
isolated events and inactive intervals. Stieltjes differential equations use
this observation to unify continuous and impulsive dynamics; see
\cite{POUSO2015,POUSO2017}. The relation with time-scale calculus is also
well established. The basic theory was introduced by Hilger and developed in
\cite{Hilger1990,BohnerPeterson2001}; the canonical ceiling map was used by
Slav\'\i k to represent dynamic equations as generalized ordinary
differential equations \cite[Lemma~4]{Slavik2012}, while L\'opez Pouso and
Rodr\'\i guez identified the Hilger delta derivative with a Stieltjes
derivative of the corresponding extension
\cite[Theorem~3.1]{POUSO2015}. Measure-theoretic descriptions of the Hilger
measure and delta integral are given in
\cite{Guseinov2003,CabadaVivero2006}.

Banach-valued Stieltjes--Bochner spaces and differentiation of indefinite
Bochner integrals were developed in \cite{Fran2020}, and scalar compactness
criteria were obtained in \cite{MR4707791}. These results do not provide the
three-space compactness principle required in weak evolution problems. The
aim here is to establish such a principle directly on the finite measure
space $([a,b),\mu_g)$, without assuming that $g$ is invertible, strictly
increasing or absolutely continuous.

Two issues must first be resolved. The first concerns the converse
fundamental theorem of calculus. If
\begin{displaymath}
 u(t)=u(a)+\int_{[a,t)}v(s)\,d\mu_g(s),
\end{displaymath}
then the differentiation theorem from \cite[Theorem~2.9]{Fran2020} gives
$u'_g=v$ almost everywhere. The converse is genuinely vector-valued:
differentiating all scalarizations $\langle y',u\rangle$ gives at most weak
information and does not produce a norm derivative or a strongly measurable
Bochner density. We therefore associate with $u$ a vector measure. When the
range space has the Radon--Nikod\'ym property, its defining vector-measure
representation yields a Bochner density and gives the desired integral
formula. The argument also identifies the total variation measure with the
measure whose density is $\|u'_g\|$. Since every reflexive Banach space has
the Radon--Nikod\'ym property, this hypothesis is natural for the compactness
applications considered below \cite[Ch.~III, Sects.~1--2]{DiestelUhl1977}.

The second issue is specific to derivators with a terminal atom. If
$s<b$ is an atom and $\mu_g((s,b))=0$, the Bochner state class records the
left-continuous value at $s$, whereas the derivative at the atom determines
the post-jump value. Distinct derivative classes may therefore correspond to
the same state class. Example~\ref{ex:terminal-atom-nonuniqueness} makes this
ambiguity explicit. The appropriate evolution space is consequently the
graph of the Stieltjes differentiation relation: its elements are admissible
pairs $(u,z)$ consisting of a state and its derivative. Here the word
``graph'' is used in the operator-theoretic sense, rather than for the
geometric graph of a single function.

For reflexive spaces $B_0$ and $B_1$, exponents $1<p_0,p_1<\infty$, and
\begin{displaymath}
 B_0\Subset B\hookrightarrow B_1,
\end{displaymath}
our main result states that the state projection
\begin{equation}\label{eq:intro-main-projection}
 \pi_0:\mathbb W_g^{p_0,p_1}([a,b];B_0,B_1)
 \longrightarrow L_g^{p_0}([a,b);B),
 \qquad \pi_0(u,z)=u,
\end{equation}
is compact. The proof works directly with $\mu_g$. At atoms, bounded point
evaluation and the compact embedding $B_0\Subset B_1$ give pointwise strong
compactness. At nonatomic points, local averages over shrinking half-open
intervals are compared with the canonical representative through
\begin{displaymath}
 \|\widetilde u(t)-\widetilde u(s)\|_{B_1}
 \leq
 \|z\|_{L_g^{p_1}(B_1)}
 \mu_g([s,t))^{1-1/p_1},
 \qquad s<t.
\end{displaymath}
This yields strong convergence in $L_g^{p_0}(B_1)$; Ehrling's inequality then
upgrades it to strong convergence in $L_g^{p_0}(B)$.

The theorem contains several familiar settings. For $g(t)=t$, it reduces to
the classical Aubin--Lions theorem. The canonical derivator of a bounded time
scale gives a delta-calculus compactness result. Purely atomic measures yield
weighted sequence compactness, while measures with Lebesgue, Cantor and
atomic components provide mixed continuous--singular--discrete models. The
contribution is therefore threefold: a vector-valued Stieltjes fundamental
theorem under a range-space condition that is sharp uniformly over the class
of derivators considered here (because that class contains the classical clock
$g(t)=t$), a well-posed evolution graph that retains terminal impulses, and an
Aubin--Lions theorem proved without reparametrizing the derivator.

From a functional-analytic viewpoint, the paper identifies the correct
closed graph on which Stieltjes differentiation acts when the state is only
specified as a Bochner equivalence class.  The Radon--Nikod\'ym property
governs the representation of its absolutely continuous vector measures,
whereas compactness of the state projection is obtained from weak compactness
in the outer Bochner spaces, compactness of the spatial embedding, and a
measure-adapted temporal modulus.  Thus the role of the clock and the role of
the Banach-space geometry remain explicitly separated.

Section~\ref{sec:preliminaries} fixes the conventions and collects the
Stieltjes--Bochner tools. Section~\ref{sec:fundamental-theorem} proves the
vector-valued fundamental theorem. Section~\ref{sec:aubin-lions} introduces
the evolution graph and establishes the compactness theorem.
Section~\ref{sec:consequences} derives convergence consequences, the
time-scale formulation, atomic and mixed examples, and the endpoint
limitations. Function spaces are defined on $([a,b),\mu_g)$, whereas
canonical representatives are defined on $[a,b]$ so that right jumps and
terminal values remain visible.

\section{Preliminaries for Stieltjes--Bochner analysis}\label{sec:preliminaries}

\subsection{Elementary Stieltjes calculus}

Throughout the paper, $a,b\in\mathbb R$ are fixed and $a<b$.
Let $g:\mathbb R\to\mathbb R$ be nondecreasing and left-continuous.  As in
\cite{POUSO2015}, we call $g$ a \emph{derivator}.  The Lebesgue--Stieltjes
measure generated by $g$ is denoted by $\mu_g$ and is fixed by the convention
\begin{displaymath}
 \mu_g([c,d))=g(d)-g(c),
 \qquad c<d.
\end{displaymath}
This convention and the corresponding measure construction are recalled in
\cite[Sect.~2]{POUSO2015} and \cite{CabadaFernandez2026}.  Measurability and almost-everywhere statements
with the prefix $g$ always refer to $\mu_g$.  If $E$ is $g$-measurable, we
write $L_g^p(E)=L^p(E,\mu_g)$, $1\leq p\leq\infty$.

Only the restriction of $\mu_g$ to $[a,b)$ enters the function spaces below.
It is a finite measure, and
\begin{equation}\label{eq:total-clock-mass}
 M_g:=\mu_g([a,b))=g(b)-g(a)<\infty.
\end{equation}
The half-open interval is not a cosmetic choice: it is the convention
compatible with a left-continuous derivator and its right jumps.

We shall use the following two subsets of the real line:
\begin{align*}
	C_g&=\bigl\{t\in\mathbb R:
	  g\text{ is constant on }(t-\varepsilon,t+\varepsilon)
	  \text{ for some }\varepsilon>0\bigr\},\\
	D_g&=\{t\in\mathbb R:\Delta^+g(t)>0\}.
\end{align*}
Here $\Delta^+g(t)=g(t^+)-g(t)$ and $g(t^+)$ is the right-hand limit of
$g$ at $t$.  Clearly $C_g\cap D_g=\varnothing$.  The set $C_g$ is open
\cite{POUSO2015}; hence its connected components form an at most countable
family of pairwise disjoint, possibly unbounded, open intervals.  We write
\begin{equation}\label{Cgdisj}
	C_g=\bigcup_{n\in\Lambda} I_n,
\end{equation}
where $\Lambda\subset\mathbb N$.  If $I_n=(a_n,b_n)$, its endpoints are
understood in the extended real line; only finite endpoints will be used
below.  This convention is relevant, for example, when a derivator is
extended constantly outside a compact interval.

For later use, we record the precise relation between right jumps and atoms.
If $t\in\mathbb R$, continuity from above applied to $[t,t+1/n)$ gives
\begin{equation}\label{eq:atom-mass-clock}
 \mu_g(\{t\})
 =\lim_{n\to\infty}\mu_g([t,t+1/n))
 =g(t^+)-g(t)=\Delta^+g(t).
\end{equation}
Consequently, the atoms of $\mu_g$ are precisely the points of $D_g$.
Moreover, $D_g\cap[a,b)$ is at most countable.  Indeed, for every
$k\in\mathbb N$ the set of atoms with mass at least $1/k$ is finite, since
their masses add up to at most $M_g$; taking the union over $k$ proves the
claim.

With this notation, we introduce the sets of nonatomic left and right
endpoints of the connected components of $C_g$:
\begin{equation}\label{eq:Ng-endpoints}
	\begin{aligned}
		N_g^-&:=\{a_n\in\mathbb R:n\in\Lambda\}\setminus D_g,\\
		N_g^+&:=\{b_n\in\mathbb R:n\in\Lambda\}\setminus D_g,\\
		N_g&:=N_g^-\cup N_g^+.
	\end{aligned}
\end{equation}
	The superscript records the side used in the pointwise definition of the
	Stieltjes derivative: the left at points of $N_g^-$ and the right at points
	of $N_g^+$.  Endpoints that also belong to $D_g$ are excluded because the
	atomic convention applies there.

Since $\Lambda$ is at most countable, $N_g$ is at most countable. Moreover,
$N_g\cap D_g=\emptyset$ and hence
\begin{displaymath}
	\mu_g(\{t\})=\Delta^+g(t)=0,
	\qquad t\in N_g.
\end{displaymath}
It follows that
\begin{equation}\label{eq:Ng-mug-null}
	\mu_g(N_g)=0.
\end{equation}
Likewise, $g$ is constant on each component $I_n$, so
$\mu_g(I_n)=0$. By~\eqref{Cgdisj} and countable additivity,
\begin{equation}\label{eq:CgNg-mug-null}
	\mu_g(C_g)=0,
	\qquad
	\mu_g(C_g\cup N_g)=0.
\end{equation}
Thus $C_g\cup N_g$ may be included in the exceptional set whenever a
statement is only required to hold $g$-almost everywhere.  The notation
$N_g^\pm$ is needed only for pointwise derivatives at the endpoints of a
plateau.

The following notion of continuity is adapted to the clock $g$; see
\cite[Definition~3.1]{POUSO2017}.
\begin{dfn}\label{dfncont}
	Let $A\subset\mathbb R$ and let $X$ be a normed space.  A function
	$f:A\to X$ is \emph{$g$-continuous at $t\in A$} if, for every
	$\varepsilon>0$, there is $\delta>0$ such that
	\begin{displaymath}
	 \|f(s)-f(t)\|_X<\varepsilon
	 \quad\text{whenever }s\in A\text{ and }|g(s)-g(t)|<\delta.
	\end{displaymath}
	We denote by $\mathcal C_g(A;X)$ the space of $g$-continuous functions.
	Its bounded subspace is denoted by $\mathcal{BC}_g(A;X)$ and is endowed
	with the supremum norm
	\begin{displaymath}
	 \|f\|_{\infty,X}=\sup_{t\in A}\|f(t)\|_X.
	\end{displaymath}
\end{dfn}

This notion coincides with continuity of
$f:(A,\tau_g)\to(X,d)$, where $\tau_g$ is the topology induced by the
pseudometric $\rho_g(s,t)=|g(s)-g(t)|$. In particular, a $g$-continuous map
is constant on every level set $g^{-1}(\{\gamma\})$, $\gamma\in g(A)$.

Only an almost-everywhere derivative is needed in this paper.  It is therefore
enough to define it outside the null set $C_g\cup N_g$.  This also avoids
imposing the endpoint assumptions needed when the derivative is defined at
every point; compare \cite[Definition~3.7]{FERNANDEZ2022126010}.

\begin{dfn}[Strong Stieltjes derivative]\label{def:strong-g-derivative}
	Let $Y$ be a Banach space and let $f:[a,b]\to Y$.  If
	$t\in[a,b)\setminus(C_g\cup N_g)$, set
	\begin{displaymath}
	 f'_g(t)=
	 \begin{cases}
	 \displaystyle
	 \lim_{s\to t}\frac{f(s)-f(t)}{g(s)-g(t)},
	 &t\notin D_g,\\[3mm]
	 \displaystyle
	 \frac{f(t^+)-f(t)}{\Delta^+g(t)},
	 &t\in D_g,
	 \end{cases}
	\end{displaymath}
	provided that the indicated limit exists in the norm of $Y$.  In the first
	line the limit is relative to $[a,b]$ and is taken through points for which
	$g(s)\ne g(t)$.  We call this the \emph{strong $g$-derivative} of $f$.
\end{dfn}

\begin{rem}\label{rem:strong-vector-g-derivative}
	The values assigned to $f'_g$ on $C_g\cup N_g$ do not affect the Bochner
	spaces below, by \eqref{eq:CgNg-mug-null}.  If a derivative is required at
	every point, one must use the one-sided conventions at $N_g^-$ and $N_g^+$
	and the plateau convention given in
	\cite[Definition~3.7]{FERNANDEZ2022126010}.  The same reference states the
	corresponding restrictions at the endpoints of $[a,b]$.
\end{rem}

The scalar fundamental theorem below uses the following notion, which will
also be used for Banach-valued maps.
\begin{dfn}[$g$-absolutely continuous function] \label{defgabs}
	Let $(Y,\|\cdot\|)$ be a normed space and let $u:[a,b]\to Y$. We say that
	$u$ is \emph{$g$-absolutely continuous} on $[a,b]$ if for every
	$\varepsilon>0$ there exists $\delta>0$ such that, for every finite
	collection $\{(a_i,b_i)\}_{i=1}^n\subset[a,b]$ of pairwise disjoint
	intervals satisfying
	\begin{displaymath}
		\sum_{i=1}^n\bigl(g(b_i)-g(a_i)\bigr)<\delta,
	\end{displaymath}
	one has
	\begin{displaymath}
		\sum_{i=1}^n\|u(b_i)-u(a_i)\|<\varepsilon.
	\end{displaymath}
	We denote the resulting class by $AC_g([a,b];Y)$, and omit $Y$ in the
	scalar case.
\end{dfn}

We shall use the following scalar fundamental theorem. It is stated here so
that the endpoint and almost-everywhere conventions needed later are explicit.

\begin{thm}[Scalar fundamental theorem; {\cite[Theorem~5.4]{POUSO2015}}]
\label{t5.4}
A function $F:[a,b]\to\mathbb R$ is $g$-absolutely continuous if and only if
all of the following conditions are satisfied:
\begin{enumerate}
\item $F'_g(x)$ exists for $g$-almost every $x\in[a,b)$;
\item $F'_g\in L_g^1([a,b))$;
\item for every $x\in[a,b]$,
\begin{displaymath}
 F(x)=F(a)+\int_{[a,x)}F'_g(s)\,d\mu_g(s).
\end{displaymath}
\end{enumerate}
\end{thm}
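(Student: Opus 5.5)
The plan is to prove the two implications separately, using the measure $\mu_g$ on $[a,b)$ and a Lebesgue-type differentiation theorem adapted to the clock $g$.

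\emph{Sufficiency} is the easier direction. Assume (1)--(3) and set $f=F'_g\in L_g^1([a,b))$. For disjoint intervals $(a_i,b_i)\subset[a,b]$, condition (3) gives
\begin{displaymath}
 \sum_{i=1}^n|F(b_i)-F(a_i)|\le\int_{\bigcup_i[a_i,b_i)}|f|\,d\mu_g .
\end{displaymath}
The measure of the set of integration is $\sum_i\bigl(g(b_i)-g(a_i)\bigr)$. Absolute continuity of the integral $E\mapsto\int_E|f|\,d\mu_g$ with respect to the finite measure $\mu_g$ therefore yields the required $\delta$ for each $\varepsilon$.

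\emph{Necessity} is the main direction. Let $F\in AC_g([a,b])$.

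\textbf{Step 1: a signed measure.} Define $\nu([c,d))=F(d)-F(c)$ for $a\le c<d\le b$. Since $F$ is $g$-absolutely continuous, it has bounded variation (cover $[a,b]$ by finitely many blocks of small $g$-increment; blocks that are plateaus give no variation). So $F$ is a difference of two nondecreasing functions. The total variation function $V(t)=\mathrm{Var}_{[a,t]}F$ is again $g$-absolutely continuous, so both $V\pm F$ are nondecreasing and $g$-AC.

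\textbf{Step 2: left-continuity.} I need each of $V\pm F$ to be left-continuous. This holds because $g$ is left-continuous: $g(t)-g(s)\to0$ as $s\uparrow t$, and $g$-AC then forces $F(s)\to F(t)$. Consequently $V\pm F$ generate Lebesgue--Stieltjes measures $\mu_{V\pm F}$ on $[a,b)$, and $\nu=\tfrac12(\mu_{V+F}-\mu_{V-F})$.

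\textbf{Step 3: absolute continuity and a density.} The $\varepsilon$--$\delta$ condition, passed from finite unions of intervals to open sets and then to $g$-null Borel sets by outer regularity, gives $|\nu|\ll\mu_g$. The Radon--Nikod\'ym theorem then provides $f\in L_g^1([a,b))$ with
\begin{displaymath}
 F(x)=F(a)+\int_{[a,x)}f\,d\mu_g \qquad\text{for all } x\in[a,b].
\end{displaymath}

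\textbf{Step 4: $F'_g=f$ $g$-a.e.} This is the expected main obstacle, and I would split it into two cases.
\begin{itemize}
\item At atoms $t\in D_g$, the formula gives $F(t^+)-F(t)=f(t)\,\Delta^+g(t)$ directly, using \eqref{eq:atom-mass-clock} and continuity of the integral from above on $[t,t+1/n)$. So $F'_g(t)=f(t)$ by the atomic clause of Definition~\ref{def:strong-g-derivative}.
\item At nonatomic points outside $C_g\cup N_g$, the difference quotient equals the $\mu_g$-average of $f$ over $[t,s)$ or $[s,t)$. I would first try to reduce to the classical Lebesgue differentiation theorem via the pseudo-inverse of $g$. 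Concretely, I would push $\mu_g$ forward by $g$ to Lebesgue measure on $g([a,b))$, up to the countably many jump gaps, so that $f=h\circ g$ for some $h\in L^1$ with respect to Lebesgue measure. Then $\mu_g$-a.e. point maps to a Lebesgue point of $h$. Points where $g(s)=g(t)$ are excluded by the definition of the limit, and plateau endpoints lie in the null set $N_g$ by \eqref{eq:Ng-mug-null}.
\end{itemize}
If this reduction becomes awkward, my fallback is a Vitali covering argument for the $\mu_g$-averages over half-open intervals, which works for any Radon measure on $\mathbb R$ (Besicovitch). In either case the conclusion is $F'_g=f$ $g$-a.e., which establishes (1), (2) and (3).
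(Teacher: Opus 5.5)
The paper does not prove this statement; it quotes it from \cite[Theorem~5.4]{POUSO2015}. Your argument is correct in outline. It is essentially the scalar specialization of the paper's proof of Theorem~\ref{ftc}. Your sufficiency direction is the same absolute-continuity-of-the-integral computation. Your necessity chain (bounded variation as in Proposition~\ref{prop:ACg-BPV}, left-continuity, a measure dominated by $\mu_V$, $\mu_V\ll\mu_g$, Radon--Nikod\'ym, differentiation of the indefinite integral) is the one used there, with $Y=\mathbb R$.

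The genuine differences are at the two points where the paper relies on citations.
\begin{itemize}
\item In Theorem~\ref{ftc}, $\mu_V\ll\mu_g$ is obtained by applying Theorem~\ref{t5.4} to $V$, which would be circular here. Your outer-regularity argument is exactly the replacement needed. Since $V$ is scalar, it would also make Theorem~\ref{ftc} independent of Theorem~\ref{t5.4}. To make it precise, note that $\mu_g((\alpha,\beta))=g(\beta)-g(\alpha^+)$, not $g(\beta)-g(\alpha)$. So exhaust each component of an open $O\supset E$ by intervals $[\alpha',\beta)$ with $\alpha'\downarrow\alpha$ before invoking the $g$-AC condition. Apply that condition to refined partitions to get $\mu_V(O)\le\varepsilon$; then $|\nu|\le\mu_V$ yields $|\nu|(E)=0$.
\item The paper takes the differentiation step from \cite[Theorem~2.4]{POUSO2015} through Theorem~\ref{t2.9}, whereas you prove it. This makes your whole chain self-contained modulo classical measure theory and Lebesgue differentiation.
\end{itemize}

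One correction is needed in Step~4. The pushforward $g_{\#}\mu_g$ is not Lebesgue measure: each jump point $r$ becomes an atom at $g(r)$. So writing $f=h\circ g$ does not convert the atoms inside $[t,s)$ into a Lebesgue integral. Use instead the generalized inverse $\varphi(\gamma)=\max\{r\in[a,b]:g(r)\le\gamma\}$ for $\gamma\in[g(a),g(b))$.
\begin{itemize}
\item Left-continuity of $g$ gives $\varphi^{-1}([t,s))=[g(t),g(s))$, hence $\mu_g=\varphi_{\#}\bigl(\mathcal L^1|_{[g(a),g(b))}\bigr)$.
\item With $h=f\circ\varphi$, which equals $f(r)$ on each gap $[g(r),g(r^+))$, one gets $F(s)-F(t)=\int_{g(t)}^{g(s)}h\,d\gamma$.
\item Moreover $\varphi(g(t))=t$ off $C_g\cup N_g^-$, $g$ is continuous at nonatomic $t$, and $g_{\#}\bigl(\mu_g|_{[a,b)\setminus D_g}\bigr)\le\mathcal L^1$.
\end{itemize}
Hence Lebesgue points of $h$ transfer to $\mu_g$-almost every nonatomic $t$, as you claim.

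For your fallback, Besicovitch's theorem concerns centred intervals. You need one-sided ones, and $\mu_g$ need not be doubling. The right tool is the one-dimensional fact that any finite family of intervals has a subfamily with the same union and overlap at most two. This gives a weak-$(1,1)$ bound for the uncentred maximal operator of any finite measure on $\mathbb R$.
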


\subsection{Total variation and Stieltjes absolute continuity}

\begin{dfn}[Total variation]
Let $(Y,\|\cdot\|)$ be a normed space and let $u:[a,b]\to Y$.  Its total
variation on $[a,b]$ is
\begin{displaymath}
 \operatorname{Var}_{[a,b]}(u)
 :=\sup_P\sum_{i=1}^n\|u(t_i)-u(t_{i-1})\|,
\end{displaymath}
where the supremum is taken over all partitions
$P=\{a=t_0<t_1<\cdots<t_n=b\}$.  We write $u\in BV([a,b];Y)$ if this
quantity is finite.  If $I$ is an interval, $BV_{\mathrm{loc}}(I;Y)$ denotes
the class of functions that have bounded variation on every compact
subinterval of $I$.
\end{dfn}

\begin{rem}
Every function of bounded variation is bounded.  Indeed, for
$t_0,t\in[a,b]$, the triangle inequality and the definition of total
variation give
\begin{displaymath}
 \|u(t)\|
 \leq \|u(t_0)\|+\|u(t)-u(t_0)\|
 \leq \|u(t_0)\|+\operatorname{Var}_{[a,b]}(u).
\end{displaymath}
\end{rem}

We also use the standard additivity of variation on adjacent intervals.
For $a\leq c\leq b$,
\begin{displaymath}
\operatorname{Var}_{[a,b]}(u)
=
\operatorname{Var}_{[a,c]}(u)
+
\operatorname{Var}_{[c,b]}(u).
\end{displaymath}
This identity follows directly from the definition: one joins partitions of
the two subintervals for one inequality and inserts $c$ into an arbitrary
partition of $[a,b]$ for the other.

\begin{lem}[Absolute continuity implies $g$-continuity]
\label{lem:ACg-Cg}
Let $Y$ be a normed space.  Then
\begin{displaymath}
 AC_g([a,b];Y)\subset\mathcal C_g([a,b];Y).
\end{displaymath}
\end{lem}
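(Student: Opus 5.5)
The plan is to apply Definition~\ref{defgabs} with a single interval and read off $g$-continuity in the sense of Definition~\ref{dfncont}. Fix $u\in AC_g([a,b];Y)$, a point $t\in[a,b]$ and $\varepsilon>0$. Let $\delta>0$ be the number given by $g$-absolute continuity for this $\varepsilon$.

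Take any $s\in[a,b]$ with $|g(s)-g(t)|<\delta$. If $s=t$ there is nothing to prove. Otherwise, set $c=\min\{s,t\}$ and $d=\max\{s,t\}$. The one-element family $\{(c,d)\}$ is trivially a collection of pairwise disjoint intervals contained in $[a,b]$. Since $g$ is nondecreasing,
\begin{displaymath}
 g(d)-g(c)=|g(s)-g(t)|<\delta .
\end{displaymath}
The definition then gives
\begin{displaymath}
 \|u(s)-u(t)\|=\|u(d)-u(c)\|<\varepsilon ,
\end{displaymath}
which is exactly the $g$-continuity condition at $t$. As $t$ was arbitrary, $u\in\mathcal C_g([a,b];Y)$.

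There is no real obstacle. The only points to check are that Definition~\ref{defgabs} allows a collection with $n=1$, and that the sum $g(b_i)-g(a_i)$ uses the endpoint values of $g$. The latter lets the possibly degenerate case $g(s)=g(t)$ with $s\neq t$, for instance on a plateau, be handled by the same estimate. In that case the conclusion $u(s)=u(t)$ follows because the bound $\|u(s)-u(t)\|<\varepsilon$ holds for every $\varepsilon>0$. This is consistent with the remark after Definition~\ref{dfncont} that $g$-continuous maps are constant on the level sets of $g$.
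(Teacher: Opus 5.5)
Your proposal is correct and is essentially the paper's proof: apply Definition~\ref{defgabs} to the single interval $(\min\{s,t\},\max\{s,t\})$, use monotonicity of $g$ to get $g(d)-g(c)=|g(s)-g(t)|<\delta$, and conclude $\|u(s)-u(t)\|<\varepsilon$. The paper adds the same observation that $g(s)=g(t)$ forces $u(s)=u(t)$, and your separate handling of $s=t$ is a small piece of care that the paper leaves implicit.
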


\begin{proof}
Let $u\in AC_g([a,b];Y)$, fix $t\in[a,b]$, and let $\varepsilon>0$.
Choose $\delta>0$ from Definition~\ref{defgabs}.  If $s\in[a,b]$ and
$|g(s)-g(t)|<\delta$, put $c=\min\{s,t\}$ and $d=\max\{s,t\}$.  Since $g$
is nondecreasing,
\begin{displaymath}
 g(d)-g(c)=|g(s)-g(t)|<\delta.
\end{displaymath}
Applying Definition~\ref{defgabs} to the single interval $(c,d)$ gives
$\|u(s)-u(t)\|_Y<\varepsilon$.  Thus $u$ is $g$-continuous at $t$.  Notice
also that if $g(s)=g(t)$, the same argument works for every
$\varepsilon>0$, and hence $u(s)=u(t)$.
\end{proof}

We shall also need the following bounded-variation property.

\begin{pro}\label{prop:ACg-BPV}
	Let $(Y,\|\cdot\|)$ be a normed space. Then
	\begin{displaymath}
		AC_g([a,b];Y)
		\subset
		BV([a,b];Y).
	\end{displaymath}
	Consequently,
	$AC_g([a,b];Y)\subset\mathcal{BC}_g([a,b];Y)$.
\end{pro}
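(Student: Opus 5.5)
We follow the classical argument that an absolutely continuous function has bounded variation, measuring interval lengths by increments of $g$ instead of by Lebesgue measure. Let $u\in AC_g([a,b];Y)$ and apply Definition~\ref{defgabs} with $\varepsilon=1$ to obtain $\delta>0$. The plan is to cover $[a,b]$ by finitely many consecutive closed subintervals $[s_{j-1},s_j]$, $a=s_0<s_1<\cdots<s_m=b$, on each of which $u$ has variation at most $1$. Additivity of variation then gives $\operatorname{Var}_{[a,b]}(u)\leq m<\infty$.

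\textbf{Bound on a short interval.} Suppose $[c,d]\subset[a,b]$ satisfies $g(d)-g(c)<\delta$. Take any partition $c=t_0<\cdots<t_n=d$. Its open intervals $(t_{i-1},t_i)$ are pairwise disjoint, and since they telescope,
\begin{displaymath}
\sum_{i=1}^n\bigl(g(t_i)-g(t_{i-1})\bigr)=g(d)-g(c)<\delta .
\end{displaymath}
Definition~\ref{defgabs} therefore gives $\sum_i\|u(t_i)-u(t_{i-1})\|<1$, so $\operatorname{Var}_{[c,d]}(u)\leq1$.

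\textbf{Main obstacle: choosing the subdivision.} Because $g$ may jump, a short interval in $t$ need not have small $g$-increment, so the subintervals cannot be taken uniformly fine. Atoms are harmless in this respect. A jump of $g$ at $t$ lies in $[t,t+h)$ for every $h>0$, and it is never contained inside a partition interval whose right endpoint is $t$.

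We therefore build the points greedily, using one-sided limits of the monotone function $g$.
\begin{itemize}
\item Given $s_{j-1}<b$, note that $g(s_{j-1}^+)$ exists. Choose $r>s_{j-1}$ so close that $g(r)-g(s_{j-1}^+)<\delta/2$.
\item The $g$-increment on $(s_{j-1},r]$ is then below $\delta/2$. The only problem is the closed endpoint $s_{j-1}$, where $g(r)-g(s_{j-1})$ includes the jump $\Delta^+g(s_{j-1})$, which may exceed $\delta$.
\item To avoid this, we handle the single pair $(s_{j-1},t_1)$ separately: $\|u(t_1)-u(s_{j-1})\|$ is at most $\|u(t_1)\|+\|u(s_{j-1})\|$.
\end{itemize}

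A cleaner route is a compactness argument. For each $t\in[a,b]$, one-sided limits of $g$ give $\eta_t>0$ with
\begin{displaymath}
g(t)-g(t-\eta_t)<\delta,\qquad g(t+\eta_t)-g(t^+)<\delta .
\end{displaymath}
On $[t-\eta_t,t]$ the short-interval bound gives variation at most $1$. On $[t,t+\eta_t]$, any partition starting at $t$ and containing $t_1$ contributes $\|u(t_1)-u(t)\|$ plus terms over $[t_1,t+\eta_t]$. That interval has $g$-increment below $\delta$, so those terms add to less than $1$.

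For the first term, fix $t_1\downarrow t$ and use the $g$-continuity of Lemma~\ref{lem:ACg-Cg}. Since $|g(t_1)-g(t)|\to\Delta^+g(t)$, this does not directly bound $\|u(t_1)-u(t)\|$. However,
\begin{displaymath}
\|u(t_1)-u(t)\|\leq\|u(t_1)-u(t')\|+\|u(t')-u(t)\|
\end{displaymath}
for a fixed $t'\in(t,t+\eta_t)$. The first term on the right is below $1$ by the short-interval bound, and the second is a fixed finite number $K_t$. Hence $\operatorname{Var}_{[t,t+\eta_t]}(u)\leq 2+K_t$.

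Cover $[a,b]$ by the open intervals $(t-\eta_t,t+\eta_t)$, extract a finite subcover, and order the resulting endpoints. Additivity of variation then yields $\operatorname{Var}_{[a,b]}(u)<\infty$.

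\textbf{The consequence.} Every function of bounded variation is bounded, by the preceding remark. Lemma~\ref{lem:ACg-Cg} gives $g$-continuity. Together these give $u\in\mathcal{BC}_g([a,b];Y)$.
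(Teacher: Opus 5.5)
Your compactness argument is correct, but it takes a different route from the paper's proof.

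\textbf{The paper's route.} The paper works on the range side. It cuts $[g(a),g(b)]$ into $m$ cells $J_k$ of length $h<\delta$, with the last cell closed, and pulls them back to order-convex sets $E_k=\{t:g(t)\in J_k\}$. Any chain of increments with endpoints in a single cell then sums to less than $1$. Along an ordered partition the cell index is nondecreasing, so at most $r-1$ increments cross between cells. These are exactly the increments that may straddle a jump of $g$, and each is bounded by $M+2$ via the triangle inequality through one fixed representative $c_j$ per cell.

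\textbf{Your route.} You argue locally. Left-continuity and the right limit $g(t^+)$ give a neighbourhood $[t-\eta_t,t+\eta_t]$ on which the only increment not controlled by Definition~\ref{defgabs} is the first one leaving $t$. You handle that increment through a fixed $t'$, which is the same device as the paper's $c_j$. Heine--Borel and additivity of variation then globalize the local bounds.

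\textbf{Comparison.} The paper's argument is global and explicit, giving the bound $r+(r-1)(M+2)$ with $r\le m$. It uses only monotonicity of $g$ and no compactness of $[a,b]$. Yours is closer to the classical proof and shows that the jump difficulty is purely one-sided. The costs are two. First, you need cover bookkeeping: after ordering the endpoints of the finite subcover, each gap lies in a single closed covering interval, for instance the one containing its midpoint. Second, you use left-continuity of $g$ for the left halves, or else a symmetric reference-point trick would be needed there.

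\textbf{Presentation.} Drop the greedy bullets. As written, they bound $\|u(t_1)-u(s_{j-1})\|$ by $\|u(t_1)\|+\|u(s_{j-1})\|$, which presupposes the boundedness of $u$ that is a consequence of the result, not a hypothesis. They also leave termination of the greedy scheme open. The compactness argument needs neither.
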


\begin{proof}
	Let $u\in AC_g([a,b];Y)$. We first consider the degenerate case
	$g(a)=g(b)$. Since $g$ is nondecreasing, it is constant on $[a,b]$. Given
	$s<t$ and any $\varepsilon>0$, the single interval $(s,t)$ has
	\begin{displaymath}
		g(t)-g(s)=0.
	\end{displaymath}
	Definition~\ref{defgabs} therefore yields
	$\|u(t)-u(s)\|<\varepsilon$. Since $\varepsilon$ is arbitrary,
	$u(t)=u(s)$; hence $u$ is constant and
	$\operatorname{Var}_{[a,b]}(u)=0$.

	Assume now that
	\begin{displaymath}
		G:=g(b)-g(a)>0.
	\end{displaymath}
	Apply Definition~\ref{defgabs} with $\varepsilon=1$, and let $\delta>0$
	be the corresponding number. Choose $m\in\mathbb N$ such that
	\begin{displaymath}
		h:=\frac{G}{m}<\delta,
	\qquad
		y_k:=g(a)+kh,
	\quad k=0,\ldots,m.
	\end{displaymath}
	We use a half-open partition of the whole range $[g(a),g(b)]$:
	\begin{displaymath}
		J_k:=[y_{k-1},y_k),
		\quad k=1,\ldots,m-1,
		\qquad
		J_m:=[y_{m-1},y_m].
	\end{displaymath}
	Define
	\begin{displaymath}
		E_k:=\{t\in[a,b]:g(t)\in J_k\},
		\qquad k=1,\ldots,m.
	\end{displaymath}
	The sets $E_k$ are pairwise disjoint and their union is exactly $[a,b]$;
	the last range interval contains the previously missing endpoint $g(b)$.
	Moreover, since $g$ is nondecreasing, every nonempty $E_k$ is order-convex
	(that is, it contains every point lying between any two of its points):
	if $s,t\in E_k$ and $s<r<t$, then $r\in E_k$.

	Let
	\begin{displaymath}
		s_0<s_1<\cdots<s_q
	\end{displaymath}
	be arbitrary points of one fixed nonempty set $E_k$. The intervals
	$(s_{i-1},s_i)$ are pairwise disjoint and
	\begin{displaymath}
		\sum_{i=1}^q
		\bigl(g(s_i)-g(s_{i-1})\bigr)
		=
		g(s_q)-g(s_0)
		\leq h<\delta.
	\end{displaymath}
	Hence
	\begin{equation}\label{eq:variation-inside-range-cell}
		\sum_{i=1}^q
		\|u(s_i)-u(s_{i-1})\|<1.
	\end{equation}
	In particular, any two points $s,t\in E_k$ satisfy
	\begin{equation}\label{eq:oscillation-inside-range-cell}
		\|u(t)-u(s)\|<1.
	\end{equation}

	List the nonempty cells in their natural order as
	\begin{displaymath}
		E_{k_1},\ldots,E_{k_r},
		\qquad k_1<\cdots<k_r,
	\end{displaymath}
	and choose one point $c_j\in E_{k_j}$ for each $j$. Since this is a finite
	family,
	\begin{displaymath}
		M:=\max_{1\leq p,q\leq r}\|u(c_p)-u(c_q)\|<\infty.
	\end{displaymath}
	Let $P=\{t_0<\cdots<t_n\}$ be an arbitrary partition of $[a,b]$. The
	increments whose endpoints belong to the same cell have, by
	\eqref{eq:variation-inside-range-cell}, total sum less than $1$ for each
	nonempty cell, and therefore contribute at most $r$ altogether.

	The cell index along the ordered partition $P$ is nondecreasing. Thus there
	are at most $r-1$ increments whose endpoints belong to different cells. If
	$t_{i-1}\in E_{k_p}$ and $t_i\in E_{k_q}$ with $p<q$, then
	\eqref{eq:oscillation-inside-range-cell} and the triangle inequality give
	\begin{displaymath}
		\begin{aligned}
		\|u(t_i)-u(t_{i-1})\|
		&\leq
		\|u(t_i)-u(c_q)\|
		+
		\|u(c_q)-u(c_p)\|
		+
		\|u(c_p)-u(t_{i-1})\|
		\\
		&\leq M+2.
		\end{aligned}
	\end{displaymath}
	Consequently,
	\begin{displaymath}
		\sum_{i=1}^n\|u(t_i)-u(t_{i-1})\|
		\leq
		r+(r-1)(M+2).
	\end{displaymath}
	The right-hand side is independent of $P$. Taking the supremum over all
	partitions proves that
	$\operatorname{Var}_{[a,b]}(u)<\infty$.

	Finally, a function of bounded variation on a compact interval is bounded.
	Together with Lemma~\ref{lem:ACg-Cg}, this gives
	$u\in\mathcal{BC}_g([a,b];Y)$.
\end{proof}

Let $(X,\mathfrak M,\mu)$ be a measure space and let $Y$ be a Banach space.
A map $f:X\to Y$ is \emph{weakly measurable} if
$y'\!\circ f$ is measurable for every $y'\in Y'$. It is \emph{strongly
measurable} if it is the $\mu$-almost-everywhere pointwise limit of a sequence
of $Y$-valued simple functions. A strongly measurable map is
\emph{Bochner integrable} if $\|f(\cdot)\|_Y\in L^1(X,\mu)$. For a $g$-measurable set $X$ and $1\leq p<\infty$, we denote by
$L_g^p(X;Y)$ the space of equivalence classes, modulo
$\mu_g$-almost-everywhere equality, of strongly $\mu_g$-measurable maps
$f:X\to Y$ for which
\begin{displaymath}
 \|f\|_{L_g^p(X;Y)}
 :=
 \left(\int_X\|f(x)\|_Y^p\,d\mu_g(x)\right)^{1/p}
 <\infty.
\end{displaymath}
The space $L_g^\infty(X;Y)$ is defined analogously using the essential
supremum.  For $Y=\mathbb R$ this agrees with the scalar notation introduced
above.

\subsection{Bochner integration and compactness tools}

Two auxiliary results will be used repeatedly.  We state them in the precise
form required by the later arguments.

The next elementary approximation fact spells out one step in the
differentiation argument.  The pointwise bound allows the scalar
differentiation theorem to be applied to the approximation error without a
second truncation argument.

\begin{lem}[Dominated simple approximation]
\label{lem:dominated-simple-approximation}
Let $(X,\mathfrak M,\mu)$ be a measure space, let $V$ be a Banach space, and
let $f:X\to V$ be strongly measurable.  There are measurable simple
functions $f_n:X\to V$ such that
\begin{equation}\label{eq:dominated-simple-approximation}
 f_n(x)\longrightarrow f(x)
 \quad\text{and}\quad
 \|f_n(x)\|_V\leq2\|f(x)\|_V
\end{equation}
for $\mu$-almost every $x\in X$.  In particular, if $f$ is Bochner
integrable, then every $f_n$ and every scalar function $\|f-f_n\|_V$ is
integrable.
\end{lem}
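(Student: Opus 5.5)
Since $f$ is strongly measurable, there is a $\mu$-null set $Z$ and simple functions $s_n:X\to V$ with $s_n(x)\to f(x)$ for every $x\in X\setminus Z$. The plan is to truncate these approximants so that they never exceed twice the norm of $f$, while preserving convergence. Define
\begin{displaymath}
 f_n(x):=
 \begin{cases}
 s_n(x), & \|s_n(x)\|_V\leq 2\|f(x)\|_V,\\
 0, & \text{otherwise}.
 \end{cases}
\end{displaymath}
Then $\|f_n(x)\|_V\leq2\|f(x)\|_V$ holds for every $x\in X$, not just almost everywhere.

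\emph{Step 1: $f_n$ is a measurable simple function.} The map $x\mapsto\|f(x)\|_V$ is measurable, being the pointwise a.e.\ limit of the measurable functions $\|s_n\|_V$. To avoid completeness issues, one may redefine $f$ as $0$ on $Z$, or work with $\limsup_n\|s_n\|_V$ in place of $\|f\|_V$. Hence the set
\begin{displaymath}
 A_n:=\{x:\|s_n(x)\|_V\leq2\|f(x)\|_V\}
\end{displaymath}
is measurable. Therefore $f_n=s_n\mathbf 1_{A_n}$ is simple and measurable, and it takes finitely many values.

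\emph{Step 2: convergence.} Fix $x\notin Z$. If $f(x)=0$, then $\|f_n(x)\|_V\leq 2\|f(x)\|_V=0$, so $f_n(x)=0=f(x)$ for all $n$. If $f(x)\neq0$, then $\|s_n(x)\|_V\to\|f(x)\|_V<2\|f(x)\|_V$. Hence $x\in A_n$ for all large $n$, so $f_n(x)=s_n(x)\to f(x)$. This gives \eqref{eq:dominated-simple-approximation} off $Z$.

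\emph{Step 3: integrability.} If $f$ is Bochner integrable, the bound gives $\|f_n\|_V\leq2\|f\|_V\in L^1(\mu)$, so each $f_n$ is Bochner integrable. Moreover $\|f-f_n\|_V\leq3\|f\|_V$ almost everywhere, and this function is measurable as a limit of the measurable functions $\|s_m-f_n\|_V$. Hence $\|f-f_n\|_V$ is integrable. Integrability of a simple $f_n$ also follows directly: each nonzero value is taken on a set of finite measure, because $\|f\|_V$ is bounded below there.

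The only delicate point is the measurability of $A_n$ when $f$ is defined only up to null sets, and modifying $f$ on $Z$ handles it.
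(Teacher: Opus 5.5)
Your proof is correct and takes essentially the paper's route: truncate the given simple approximants by the indicator of a measurable set on which the domination holds, then argue convergence separately according to whether $f(x)=0$. The differences are cosmetic. The paper truncates on $\{x\in X\setminus N: f(x)\neq0,\ \|\varphi_n(x)-f(x)\|_V\le\|f(x)\|_V\}$ and obtains the factor $2$ from the triangle inequality, with measurability secured by working inside $X\setminus N$. You truncate directly at $\|s_n\|_V\le2\|f\|_V$ and secure measurability by modifying $f$ on the null set.
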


\begin{proof}
Strong measurability provides simple functions $\varphi_n:X\to V$ and a
set $N\in\mathfrak M$ with $\mu(N)=0$ such that
$\varphi_n(x)\to f(x)$ for every $x\in X\setminus N$.  For each $n$, set
\begin{displaymath}
 E_n
 =
 \bigl\{x\in X\setminus N:
 f(x)\ne0\text{ and }
 \|\varphi_n(x)-f(x)\|_V\leq\|f(x)\|_V\bigr\}
\end{displaymath}
and define $f_n=\mathbf1_{E_n}\varphi_n$.  The set $E_n$ is measurable and
$f_n$ is still a simple function.  Fix $x\in X\setminus N$.  If
$f(x)\ne0$, then $x\in E_n$ for all sufficiently large $n$, and hence
$f_n(x)=\varphi_n(x)\to f(x)$.  If $f(x)=0$, then $f_n(x)=0$ for every
$n$.  Therefore $f_n(x)\to f(x)$ for every $x\in X\setminus N$.

On $E_n$, the triangle inequality gives
\begin{displaymath}
 \|f_n(x)\|_V
 \leq\|f(x)\|_V+\|\varphi_n(x)-f(x)\|_V
 \leq2\|f(x)\|_V,
\end{displaymath}
and the same inequality is immediate on
$(X\setminus N)\setminus E_n$.  Thus the pointwise bound holds
$\mu$-almost everywhere.  Finally,
$\|f-f_n\|_V\leq3\|f\|_V$ $\mu$-almost everywhere, which proves the
integrability assertion.
\end{proof}

\begin{thm}[Differentiation of a Stieltjes--Bochner integral]\label{t2.9}
Let $V$ be a Banach space and let $f:[a,b]\to V$ be Bochner
$g$-integrable.  Define
		\begin{displaymath}
			F(t)=\int_{[a,t)} f(s) \, d\mu_g(s),\qquad t\in[a,b].
	\end{displaymath}
	Then $F\in AC_g([a,b];V)$ and there exists a $g$-measurable set
	$N\subset [a,b)$ such that
	$\mu_g(N)=0$ and
	\begin{displaymath}
		F_g'(t)=f(t),\qquad t \in [a,b) \setminus N.
		\end{displaymath} 
\end{thm}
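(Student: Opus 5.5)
The plan has two parts: first show $F\in AC_g([a,b];V)$ directly from absolute continuity of the integral, then prove the derivative identity by reducing to the scalar theory through the dominated simple approximation of Lemma~\ref{lem:dominated-simple-approximation}.

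\textbf{Absolute continuity.} Since $\|f\|_V\in L_g^1([a,b))$, the measure $E\mapsto\int_E\|f\|_V\,d\mu_g$ is absolutely continuous with respect to $\mu_g$. Given $\varepsilon>0$, choose $\delta>0$ such that $\mu_g(E)<\delta$ implies $\int_E\|f\|_V\,d\mu_g<\varepsilon$. For disjoint intervals $(a_i,b_i)$ we have $F(b_i)-F(a_i)=\int_{[a_i,b_i)}f\,d\mu_g$. The half-open sets $[a_i,b_i)$ are pairwise disjoint, because open intervals that are disjoint can share at most endpoints. Their union has $\mu_g$-measure $\sum_i (g(b_i)-g(a_i))<\delta$. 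Therefore $\sum_i\|F(b_i)-F(a_i)\|\le\int_{\cup[a_i,b_i)}\|f\|_V\,d\mu_g<\varepsilon$.

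\textbf{Derivative identity.} Take simple functions $f_n$ as in Lemma~\ref{lem:dominated-simple-approximation}. Put $F_n(t)=\int_{[a,t)}f_n\,d\mu_g$ and $h_n(t)=\int_{[a,t)}\|f-f_n\|_V\,d\mu_g$.
\begin{itemize}
\item[(i)] Each $f_n=\sum_j v_j\mathbf 1_{A_j}$ is a finite combination, so $F_n=\sum_j v_j\,\phi_j$ with $\phi_j(t)=\mu_g(A_j\cap[a,t))$. By Theorem~\ref{t5.4}, applied to these scalar $g$-absolutely continuous functions, $(F_n)'_g=f_n$ off a $\mu_g$-null set $N_n$.
\item[(ii)] The scalar functions $h_n$ satisfy $(h_n)'_g=\|f-f_n\|_V$ off a null set $N_n'$, again by Theorem~\ref{t5.4}. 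Here integrability of $\|f-f_n\|_V$ is guaranteed by the lemma.
\item[(iii)] By dominated convergence, using the bound $3\|f\|_V$, we have $\int\|f-f_n\|_V\,d\mu_g\to0$.
\end{itemize}
Passing to a subsequence, we may also assume $\|f(t)-f_n(t)\|_V\to0$ off a null set $N''$.

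\textbf{Key estimate.} Let $N=N''\cup\bigcup_n(N_n\cup N_n')\cup C_g\cup N_g\cup\{t:f_n(t)\not\to f(t)\}$, a $\mu_g$-null set, and fix $t\in[a,b)\setminus N$. For $s$ with $g(s)\ne g(t)$, say $s>t$,
\[
\Bigl\|\frac{F(s)-F(t)}{g(s)-g(t)}-f(t)\Bigr\|
\le \frac{h_n(s)-h_n(t)}{g(s)-g(t)}
+\Bigl\|\frac{F_n(s)-F_n(t)}{g(s)-g(t)}-f_n(t)\Bigr\|
+\|f_n(t)-f(t)\|_V .
\]
The case $s<t$ is symmetric. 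Letting $s\to t$, the right-hand side tends to $2\|f(t)-f_n(t)\|_V$, and letting $n\to\infty$ gives $0$. The atomic case $t\in D_g$ is handled identically with $s\to t^+$: there $F(t^+)-F(t)=f(t)\Delta^+g(t)$ directly, since $\mu_g(\{t\})=\Delta^+g(t)$ and $\mu_g((t,s))\to0$.

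\textbf{Main obstacle.} The delicate step is the scalar input for $h_n$ and for the simple pieces $\phi_j$. Theorem~\ref{t5.4} gives a.e.\ differentiability of $g$-absolutely continuous scalar functions with derivative equal to the integrand. However, we need the derivative to equal the \emph{given} integrand $\|f-f_n\|_V$ pointwise a.e. This follows from the uniqueness of $L^1_g$ densities: two integrands with the same indefinite integral on all $[a,x)$ agree $\mu_g$-a.e., by a $\pi$–$\lambda$ argument on half-open intervals. Care is also needed so that the countably many exceptional sets are collected before the double limit is taken.
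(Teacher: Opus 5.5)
Your proof is correct and follows the paper's argument essentially step for step. The shared steps are the dominated simple approximation, the reduction of the finite-range part $F_n$ to scalar theory, the control of $F-F_n$ by the indefinite integral of $\|f-f_n\|_V$ giving the bound $2\|f(t)-f_n(t)\|_V$, the direct computation at atoms, and absolute continuity of the integral for $F\in AC_g([a,b];V)$. The differences are minor: you obtain the scalar differentiation statement from Theorem~\ref{t5.4} plus uniqueness of $L_g^1$ densities, where the paper cites \cite[Theorem~2.4]{POUSO2015} directly, and your step (iii) and the subsequence extraction are unnecessary because Lemma~\ref{lem:dominated-simple-approximation} already gives $f_n\to f$ $\mu_g$-almost everywhere.
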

\begin{proof}
For completeness and to keep the paper self-contained, we give the details of
the differentiation argument from \cite[Theorem~2.9]{Fran2020}.  By
Lemma~\ref{lem:dominated-simple-approximation}, choose simple $V$-valued
functions $f_n$ such that $f_n(t)\to f(t)$ for $g$-almost every $t$ and
$\|f_n(t)\|_V\leq2\|f(t)\|_V$.  Every $f_n$ and
$h_n:=\|f-f_n\|_V$ is $g$-integrable.  Put
\begin{displaymath}
 F_n(t)=\int_{[a,t)}f_n(s)\,d\mu_g(s),
 \qquad
 H_n(t)=\int_{[a,t)}h_n(s)\,d\mu_g(s).
\end{displaymath}
Since $f_n$ has finite-dimensional range, this step can be reduced explicitly
to the scalar differentiation theorem.  Let
\begin{displaymath}
 Y_n:=\operatorname{span}(f_n([a,b)))
\end{displaymath}
and choose a basis $e_1,\ldots,e_{m_n}$ of $Y_n$.  Writing
\begin{displaymath}
 f_n(t)=\sum_{j=1}^{m_n} f_{n,j}(t)e_j,
\qquad
 F_n(t)=\sum_{j=1}^{m_n}
 \left(\int_{[a,t)} f_{n,j}(s)\,d\mu_g(s)\right)e_j,
\end{displaymath}
Theorem~2.4 of \cite{POUSO2015} applies to every scalar component
$f_{n,j}$.  Because there are only finitely many components, the union of
the corresponding exceptional sets is still $g$-null.  Outside that union,
every coordinate of the difference quotient converges to the corresponding
coordinate of $f_n(t)$; equivalence of norms on the finite-dimensional space
$Y_n$ therefore yields convergence in the norm of $Y_n$, and hence in the
norm of $V$.  Consequently there is a $g$-null set $N_n^F$ such that
\begin{displaymath}
 (F_n)'_g(t)=f_n(t),
 \qquad t\in[a,b)\setminus N_n^F.
\end{displaymath}
This is the componentwise content of the finite-rank differentiation theorem
\cite[Theorem~2.8]{Fran2020}.
Independently, the scalar differentiation theorem
\cite[Theorem~2.4]{POUSO2015}, applied to $h_n$, gives a $g$-null set
$N_n^H$ such that
\begin{displaymath}
 (H_n)'_g(t)=h_n(t),
 \qquad t\in[a,b)\setminus N_n^H.
\end{displaymath}
We set $N_n=N_n^F\cup N_n^H$.

Let $N$ be the union of the sets $N_n$, the exceptional set where
$f_n\not\to f$, and $C_g\cup N_g$.  This is a $g$-null set.  Fix
$t\in[a,b)\setminus(N\cup D_g)$.  If $s>t$ and $g(s)\ne g(t)$, then
\begin{displaymath}
\begin{aligned}
&\left\|
\frac{(F-F_n)(s)-(F-F_n)(t)}{g(s)-g(t)}
\right\|_V
\\
&\quad=
\left\|
\frac{1}{g(s)-g(t)}
\int_{[t,s)}(f(r)-f_n(r))\,d\mu_g(r)
\right\|_V
\\
&\quad\leq
\frac{1}{g(s)-g(t)}
\int_{[t,s)}h_n(r)\,d\mu_g(r)
=
\frac{H_n(s)-H_n(t)}{g(s)-g(t)}.
\end{aligned}
\end{displaymath}
For $s<t$, the same argument applies after reversing both increments.
Since $(H_n)'_g(t)=h_n(t)$, it follows that
\begin{displaymath}
\limsup_{s\to t}
\left\|
\frac{(F-F_n)(s)-(F-F_n)(t)}{g(s)-g(t)}
\right\|_V
\leq h_n(t),
\end{displaymath}
where the limit is taken through points for which $g(s)\ne g(t)$.

For such $s$, the triangle inequality gives
\begin{displaymath}
\begin{aligned}
\left\|
\frac{F(s)-F(t)}{g(s)-g(t)}-f(t)
\right\|_V
&\leq
\left\|
\frac{(F-F_n)(s)-(F-F_n)(t)}{g(s)-g(t)}
\right\|_V
\\
&\quad+
\left\|
\frac{F_n(s)-F_n(t)}{g(s)-g(t)}-f_n(t)
\right\|_V
+
\|f_n(t)-f(t)\|_V.
\end{aligned}
\end{displaymath}
Taking the upper limit as $s\to t$, using
$(F_n)'_g(t)=f_n(t)$ and
$\|f_n(t)-f(t)\|_V=h_n(t)$, yields
\begin{displaymath}
 \limsup_{s\to t}
 \left\|\frac{F(s)-F(t)}{g(s)-g(t)}-f(t)\right\|_V
 \leq 2h_n(t).
\end{displaymath}
Finally, $h_n(t)\to0$ because $t$ lies outside the exceptional set where
$f_n\not\to f$.  Hence $F'_g(t)=f(t)$.

If $t\in D_g$, the calculation is direct:
\begin{displaymath}
 F(t^+)-F(t)=\int_{\{t\}}f(s)\,d\mu_g(s)
 =f(t)\Delta^+g(t).
\end{displaymath}
The value of a Bochner class at an atom is well defined, and the atomic
formula therefore holds outside no additional exceptional set.  We have
proved $F'_g=f$ $g$-almost everywhere.

It remains to prove the stated absolute continuity.  Given
$\varepsilon>0$, choose $R>0$ so that
\begin{displaymath}
 \int_{\{\|f\|_V>R\}}\|f(s)\|_V\,d\mu_g(s)<\frac{\varepsilon}{2}.
\end{displaymath}
Set $\delta=\varepsilon/(2R)$; if $f=0$ almost everywhere, any positive
$\delta$ may be used.  Whenever $E$ is measurable and
$\mu_g(E)<\delta$,
\begin{displaymath}
 \int_E\|f(s)\|_V\,d\mu_g(s)
 \leq R\mu_g(E)
 +\int_{\{\|f\|_V>R\}}\|f(s)\|_V\,d\mu_g(s)
 <\varepsilon.
\end{displaymath}
Now let $(c_i,d_i)$, $i=1,\ldots,n$, be pairwise disjoint intervals with
$\sum_i(g(d_i)-g(c_i))<\delta$.  Their half-open versions are disjoint and
their union $E$ satisfies $\mu_g(E)<\delta$.  Therefore
\begin{displaymath}
 \sum_{i=1}^n\|F(d_i)-F(c_i)\|_V
 \leq\sum_{i=1}^n\int_{[c_i,d_i)}\|f(s)\|_V\,d\mu_g(s)
 =\int_E\|f(s)\|_V\,d\mu_g(s)<\varepsilon.
\end{displaymath}
This is the defining condition for $F\in AC_g([a,b];V)$.
\end{proof}

\begin{rem}
	Theorem~\ref{t2.9} identifies $F'_g$ on $[a,b)$ outside a
	$\mu_g$-null set.  No derivative at $b$ is required.  Notice also that the
	Radon--Nikod\'ym property does not enter this result: the density $f$ is
	already strongly measurable and Bochner integrable.
\end{rem}

We use throughout the standard estimate
\begin{displaymath}
 \left\|\int_A f\,d\mu\right\|\leq\int_A\|f\|\,d\mu
\end{displaymath}
for Bochner-integrable functions.
\begin{lem}[A consequence of Ehrling's lemma]\label{l7.6}
Let $V_1,V_2,V_3$ be Banach spaces such that
$V_1\Subset V_2$ and $V_2\hookrightarrow V_3$ continuously. If $p\geq1$,
then, for every $\varepsilon>0$, there exists $C_\varepsilon>0$ such that
\begin{displaymath}
	\|v\|_{V_2}^p
	\leq
	\varepsilon\|v\|_{V_1}^p
	+
	C_\varepsilon\|v\|_{V_3}^p,
	\qquad v\in V_1.
\end{displaymath}
\end{lem}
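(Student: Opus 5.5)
We argue by contradiction, following the classical proof of Ehrling's lemma. The first step is to reduce the $p$-th power form to the linear form. Suppose we know that for every $\eta>0$ there is $K_\eta>0$ with
\begin{displaymath}
 \|v\|_{V_2}\leq\eta\|v\|_{V_1}+K_\eta\|v\|_{V_3},
 \qquad v\in V_1.
\end{displaymath}
Since $t\mapsto t^p$ is convex for $p\geq1$, we have $(x+y)^p\leq2^{p-1}(x^p+y^p)$ for $x,y\geq0$. Therefore
\begin{displaymath}
 \|v\|_{V_2}^p\leq 2^{p-1}\eta^p\|v\|_{V_1}^p+2^{p-1}K_\eta^p\|v\|_{V_3}^p.
\end{displaymath}
Given $\varepsilon>0$, choose $\eta$ with $2^{p-1}\eta^p=\varepsilon$ and set $C_\varepsilon=2^{p-1}K_\eta^p$. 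So it suffices to prove the linear inequality.

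For the linear inequality, suppose it fails for some $\eta>0$. Then for every $n\in\mathbb N$ there is $v_n\in V_1$ with
\begin{displaymath}
 \|v_n\|_{V_2}>\eta\|v_n\|_{V_1}+n\|v_n\|_{V_3}.
\end{displaymath}
This forces $v_n\neq0$, and the inequality is homogeneous, so we may normalize $\|v_n\|_{V_1}=1$. The embedding $V_1\Subset V_2$ is continuous, so $\|v_n\|_{V_2}\leq c$ for a constant $c$. Hence $\|v_n\|_{V_3}<c/n\to0$.

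Next we use compactness. Because $V_1\Subset V_2$ and $(v_n)$ is bounded in $V_1$, a subsequence, not relabelled, converges in $V_2$ to some $w\in V_2$. The embedding $V_2\hookrightarrow V_3$ is continuous, so $v_n\to w$ in $V_3$ as well. Since also $v_n\to0$ in $V_3$, uniqueness of limits in $V_3$ gives $w=0$ as an element of $V_3$. Here we must use that the embedding is injective, which is implicit in the notation $\hookrightarrow$ for an embedding; hence $w=0$ in $V_2$. Thus $\|v_n\|_{V_2}\to0$, which contradicts $\|v_n\|_{V_2}>\eta\|v_n\|_{V_1}=\eta$ for all $n$.

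The only delicate step is identifying the limit $w$ as zero. This is exactly where injectivity of $V_2\hookrightarrow V_3$ is needed. The rest is routine normalization plus the elementary convexity inequality for the $p$-th powers.
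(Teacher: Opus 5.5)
Your proof is correct and follows the paper's route. The paper derives the $p$-th power form from the linear Ehrling inequality in the same way, via $(r+s)^p\leq2^{p-1}(r^p+s^p)$, with the same choices $\eta=(\varepsilon/2^{p-1})^{1/p}$ and $C_\varepsilon=2^{p-1}C_\eta^p$. The only difference is that the paper simply cites the linear inequality (Roub\'\i\v{c}ek, Lemma~7.6), whereas you prove it by the standard normalization and compactness contradiction argument, correctly noting that injectivity of $V_2\hookrightarrow V_3$ is what forces the limit to vanish.
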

\begin{proof}
The usual form of Ehrling's lemma states that, for every $\eta>0$, there is
$C_\eta>0$ such that
\begin{displaymath}
 \|v\|_{V_2}\leq\eta\|v\|_{V_1}+C_\eta\|v\|_{V_3},
 \qquad v\in V_1;
\end{displaymath}
see \cite[Lemma~7.6]{Roubicek2013}.  Raising this inequality to the power
$p$ and using $(r+s)^p\leq2^{p-1}(r^p+s^p)$ gives
\begin{displaymath}
 \|v\|_{V_2}^p
 \leq2^{p-1}\eta^p\|v\|_{V_1}^p
   +2^{p-1}C_\eta^p\|v\|_{V_3}^p.
\end{displaymath}
Given $\varepsilon>0$, choose
$\eta=(\varepsilon/2^{p-1})^{1/p}$ and put
$C_\varepsilon=2^{p-1}C_\eta^p$.
\end{proof}
\section{A fundamental theorem for Stieltjes--Bochner integrals}
\label{sec:fundamental-theorem}

We now extend Theorem~\ref{t5.4} to Banach-valued functions.  The appropriate
assumption on the range space is the Radon--Nikod\'ym property.  In particular,
the result applies when the range space is reflexive.

Recall that a Banach space $Y$ has the \emph{Radon--Nikod\'ym property} if, for
every finite measure space $(X,\Sigma,\mu)$, every countably additive
$Y$-valued measure $\nu$ of bounded variation satisfying $\nu\ll\mu$ admits
a Bochner-integrable density $v\in L^1(X,\mu;Y)$, namely
$\nu(A)=\int_Av\,d\mu$ for all $A\in\Sigma$. This vector-measure formulation
is given in \cite[Ch.~III, Sect.~1]{DiestelUhl1977}; the fact that every
reflexive Banach space has this property is proved in
\cite[Ch.~III, Sect.~2]{DiestelUhl1977}.

The proof below uses the RNP only once: after showing that the vector measure
$\nu_u$ is absolutely continuous with respect to $\mu_g$.  All preceding
steps, including the construction of $\nu_u$ and the identification of its
total variation, are valid in an arbitrary Banach space.  Keeping these steps separate makes both the scope and the limitation of
the result explicit.

We start with two measure-theoretic lemmas.  Including their proofs makes the
subsequent use of vector measures transparent.

\begin{lem}[Approximation by the interval algebra]
	\label{lem:interval-algebra-density}
	Let $\lambda$ be a finite Borel measure on $[a,b)$, and let $\mathcal A$
	be the algebra of all finite disjoint unions of half-open intervals
	$[c,d)$ with $a\leq c<d\leq b$, together with the empty set. For
	$E,A\subset [a,b)$, their symmetric difference is defined by
	\begin{displaymath}
		E\mathbin{\triangle}A
		:=
		(E\setminus A)\cup(A\setminus E).
	\end{displaymath}
	Then, for every Borel set $E\subset[a,b)$ and every $\varepsilon>0$,
	there exists $A\in\mathcal A$ such that
	\begin{displaymath}
		\lambda(E\mathbin{\triangle}A)<\varepsilon.
	\end{displaymath}
\end{lem}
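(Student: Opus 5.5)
We use the standard good-sets argument: introduce the class
\begin{displaymath}
 \mathcal G:=\bigl\{E\subset[a,b)\text{ Borel}:\ \text{for every }\varepsilon>0\text{ there is }A\in\mathcal A\text{ with }\lambda(E\mathbin{\triangle}A)<\varepsilon\bigr\},
\end{displaymath}
and show that $\mathcal G$ is a $\sigma$-algebra containing every interval $[c,d)$. Since such intervals generate the Borel $\sigma$-algebra of $[a,b)$, it follows that $\mathcal G$ contains all Borel sets, which is the claim. First, $\mathcal A$ is an algebra: intersections of two half-open intervals are half-open intervals or empty, and the complement of $[c,d)$ in $[a,b)$ is $[a,c)\cup[d,b)$ (with empty pieces omitted). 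Thus $\mathcal A\subset\mathcal G$, and in particular $[a,b)\in\mathcal G$.

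Next we check closure under complements and finite unions. If $E\in\mathcal G$ and $A\in\mathcal A$ approximates $E$, then $E^c\mathbin{\triangle}A^c=E\mathbin{\triangle}A$ and $A^c\in\mathcal A$, so $E^c\in\mathcal G$. For unions, the inclusion $(E_1\cup E_2)\mathbin{\triangle}(A_1\cup A_2)\subset(E_1\mathbin{\triangle}A_1)\cup(E_2\mathbin{\triangle}A_2)$ shows that $E_1\cup E_2\in\mathcal G$, by choosing each $A_i$ within $\varepsilon/2$.

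The only step that uses countable additivity, and hence the main point, is closure under countable unions; here finiteness of $\lambda$ is essential. Let $E_k\in\mathcal G$ and $E=\bigcup_kE_k$. Continuity from below together with $\lambda([a,b))<\infty$ gives $m$ such that
\begin{displaymath}
 \lambda\Bigl(E\setminus\bigcup_{k\le m}E_k\Bigr)<\varepsilon/2.
\end{displaymath}
By the finite-union step there is $A\in\mathcal A$ with $\lambda\bigl((\bigcup_{k\le m}E_k)\mathbin{\triangle}A\bigr)<\varepsilon/2$. The triangle inequality for symmetric differences,
\begin{displaymath}
 E\mathbin{\triangle}A\subset\Bigl(E\mathbin{\triangle}\bigcup_{k\le m}E_k\Bigr)\cup\Bigl(\bigcup_{k\le m}E_k\mathbin{\triangle}A\Bigr),
\end{displaymath}
then gives $\lambda(E\mathbin{\triangle}A)<\varepsilon$, so $E\in\mathcal G$.

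It remains to note that half-open intervals $[c,d)\subset[a,b)$ generate the Borel sets of $[a,b)$. Every relatively open subset of $[a,b)$ is a countable union of such intervals with rational endpoints, together with possibly $[a,d)$ pieces. Hence $\mathcal G\supseteq\mathcal B([a,b))$, which completes the argument. An alternative route would be the $\pi$–$\lambda$ theorem, but the direct $\sigma$-algebra verification above is shorter.
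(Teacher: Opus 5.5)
Your proposal is correct and follows the paper's proof essentially step by step: the class of approximable Borel sets contains $\mathcal A$ and is closed under complements and finite unions, finiteness of $\lambda$ (continuity from below) gives closure under countable unions, and half-open intervals generate $\mathcal B([a,b))$. The only difference is cosmetic: the paper writes each rational open interval $(p,q)$ of a countable basis as $\bigcup_n[p_n,q)$, whereas you write relatively open sets directly as countable unions of half-open intervals with rational endpoints, together with pieces of the form $[a,d)$.
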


\begin{proof}
	All complements in this proof are taken relative to $[a,b)$. Define
	$\mathcal D$ as the family of all sets $E\in\mathcal B([a,b))$ with the
	following property: for every $\varepsilon>0$, there exists
	$A\in\mathcal A$ such that
	\begin{displaymath}
		\lambda(E\mathbin{\triangle}A)<\varepsilon.
	\end{displaymath}
	We prove that $\mathcal D$ is a $\sigma$-algebra containing $\mathcal A$.
	
	\medskip
	\noindent
	\emph{Step 1: $\mathcal A\subset\mathcal D$.}
	Let $E\in\mathcal A$. Choosing $A=E$, we have
	\begin{displaymath}
		E\mathbin{\triangle}A
		=
		E\mathbin{\triangle}E
		=
		\varnothing,
	\end{displaymath}
	and therefore
	\begin{displaymath}
		\lambda(E\mathbin{\triangle}A)=0.
	\end{displaymath}
	Thus $E\in\mathcal D$.
	
	\medskip
	\noindent
	\emph{Step 2: $\mathcal D$ is closed under complements.}
	Let $E\in\mathcal D$ and let $\varepsilon>0$. Choose
	$A\in\mathcal A$ such that
	\begin{displaymath}
		\lambda(E\mathbin{\triangle}A)<\varepsilon.
	\end{displaymath}
	Since $\mathcal A$ is an algebra, $A^c\in\mathcal A$. Moreover,
	\begin{displaymath}
		E^c\mathbin{\triangle}A^c
		=
		E\mathbin{\triangle}A.
	\end{displaymath}
	Indeed, a point belongs to exactly one of $E^c$ and $A^c$ if and only if
	it belongs to exactly one of $E$ and $A$. Hence
	\begin{displaymath}
		\lambda(E^c\mathbin{\triangle}A^c)
		=
		\lambda(E\mathbin{\triangle}A)
		<
		\varepsilon.
	\end{displaymath}
	Thus $E^c\in\mathcal D$.
	
	\medskip
	\noindent
	\emph{Step 3: $\mathcal D$ is closed under finite unions.}
	Let $E_1,E_2\in\mathcal D$ and let $\varepsilon>0$. For $i=1,2$,
	choose $A_i\in\mathcal A$ such that
	\begin{displaymath}
		\lambda(E_i\mathbin{\triangle}A_i)
		<
		\frac{\varepsilon}{2}.
	\end{displaymath}
	Since $\mathcal A$ is an algebra, $A_1\cup A_2\in\mathcal A$. We have
	\begin{displaymath}
		(E_1\cup E_2)\mathbin{\triangle}(A_1\cup A_2)
		\subset
		(E_1\mathbin{\triangle}A_1)
		\cup
		(E_2\mathbin{\triangle}A_2).
	\end{displaymath}
	To prove the inclusion, take a point in the set on the left. If it belongs
	to $E_1\cup E_2$ but not to $A_1\cup A_2$, then it belongs to
	$E_i\setminus A_i$ for at least one $i\in\{1,2\}$. If it belongs to
	$A_1\cup A_2$ but not to $E_1\cup E_2$, then it belongs to
	$A_i\setminus E_i$ for at least one $i\in\{1,2\}$. In either case it
	belongs to the set on the right. Therefore,
	\begin{displaymath}
		\begin{aligned}
			&\lambda\bigl(
			(E_1\cup E_2)\mathbin{\triangle}(A_1\cup A_2)
			\bigr)
			\\
			&\qquad\leq
			\lambda(E_1\mathbin{\triangle}A_1)
			+
			\lambda(E_2\mathbin{\triangle}A_2)
			<
			\varepsilon.
		\end{aligned}
	\end{displaymath}
	Thus $E_1\cup E_2\in\mathcal D$. By induction, $\mathcal D$ is closed
	under arbitrary finite unions.
	
	\medskip
	\noindent
	\emph{Step 4: $\mathcal D$ is closed under countable unions.}
	Let $(E_j)_{j\geq1}$ be a sequence in $\mathcal D$, and set
	\begin{displaymath}
		E:=\bigcup_{j=1}^{\infty}E_j,
		\qquad
		E^{(N)}:=\bigcup_{j=1}^{N}E_j.
	\end{displaymath}
	By Step 3, $E^{(N)}\in\mathcal D$ for every $N$. Moreover,
	$E^{(N)}\uparrow E$. Since $\lambda$ is finite, continuity from below gives
	\begin{displaymath}
		\lambda(E)
		=
		\lim_{N\to\infty}\lambda(E^{(N)}).
	\end{displaymath}
	Because $E^{(N)}\subset E$, it follows that
	\begin{displaymath}
		\lambda\bigl(E\setminus E^{(N)}\bigr)
		=
		\lambda(E)-\lambda(E^{(N)})
		\longrightarrow0.
	\end{displaymath}
	Fix $\varepsilon>0$. Choose $N$ such that
	\begin{displaymath}
		\lambda\bigl(E\setminus E^{(N)}\bigr)
		<
		\frac{\varepsilon}{2}.
	\end{displaymath}
	Since $E^{(N)}\in\mathcal D$, choose $A\in\mathcal A$ such that
	\begin{displaymath}
		\lambda\bigl(E^{(N)}\mathbin{\triangle}A\bigr)
		<
		\frac{\varepsilon}{2}.
	\end{displaymath}
	We claim that
	\begin{displaymath}
		E\mathbin{\triangle}A
		\subset
		\bigl(E\setminus E^{(N)}\bigr)
		\cup
		\bigl(E^{(N)}\mathbin{\triangle}A\bigr).
	\end{displaymath}
	Indeed, if $x\in E\setminus A$, then either $x\notin E^{(N)}$, in which
	case $x\in E\setminus E^{(N)}$, or $x\in E^{(N)}\setminus A$. If
	$x\in A\setminus E$, then, since $E^{(N)}\subset E$, we also have
	$x\in A\setminus E^{(N)}$. Thus the claimed inclusion holds, and hence
	\begin{displaymath}
		\begin{aligned}
			\lambda(E\mathbin{\triangle}A)
			&\leq
			\lambda\bigl(E\setminus E^{(N)}\bigr)
			+
			\lambda\bigl(E^{(N)}\mathbin{\triangle}A\bigr)
			\\
			&<
			\frac{\varepsilon}{2}
			+
			\frac{\varepsilon}{2}
			=
			\varepsilon.
		\end{aligned}
	\end{displaymath}
	Therefore $E\in\mathcal D$, and $\mathcal D$ is closed under countable
	unions.
	
	\medskip
	\noindent
	\emph{Step 5: $\mathcal A$ generates the Borel $\sigma$-algebra of
		$[a,b)$.}
	Every set in $\mathcal A$ is Borel, so
	\begin{displaymath}
		\sigma(\mathcal A)
		\subset
		\mathcal B([a,b)).
	\end{displaymath}
	For the converse inclusion, let $\mathcal G$ be the family consisting of
	all intervals of the following three types:
	\begin{displaymath}
		[a,q),
		\qquad
		(p,q),
		\qquad
		(p,b),
	\end{displaymath}
	where $p,q\in\mathbb Q$ and $a<p<q<b$ whenever the corresponding
	endpoints occur. The family $\mathcal G$ is countable and is a basis for
	the relative topology of $[a,b)$. We now verify that every member of
	$\mathcal G$ belongs to $\sigma(\mathcal A)$. Intervals of the form
	$[a,q)$ already belong to $\mathcal A$. If $p<q$, choose a sequence
	$(p_n)$ in $(p,q)$ such that $p_n\downarrow p$. Then
	\begin{displaymath}
		(p,q)
		=
		\bigcup_{n=1}^{\infty}[p_n,q),
	\end{displaymath}
	so $(p,q)\in\sigma(\mathcal A)$. Similarly,
	\begin{displaymath}
		(p,b)
		=
		\bigcup_{n=1}^{\infty}[p_n,b),
	\end{displaymath}
	and hence $(p,b)\in\sigma(\mathcal A)$. Therefore every member of
	$\mathcal G$ belongs to $\sigma(\mathcal A)$. Since $\mathcal G$ is a
	countable basis, every relatively open subset of $[a,b)$ is a countable
	union of members of $\mathcal G$ and consequently belongs to
	$\sigma(\mathcal A)$. Thus
	\begin{displaymath}
		\mathcal B([a,b))
		\subset
		\sigma(\mathcal A).
	\end{displaymath}
	Consequently,
	\begin{displaymath}
		\sigma(\mathcal A)
		=
		\mathcal B([a,b)).
	\end{displaymath}
	
	Since $\mathcal D$ is a $\sigma$-algebra containing $\mathcal A$, we have
	\begin{displaymath}
		\mathcal B([a,b))
		=
		\sigma(\mathcal A)
		\subset
		\mathcal D.
	\end{displaymath}
	The reverse inclusion follows from the definition of $\mathcal D$. Hence
	\begin{displaymath}
		\mathcal D
		=
		\mathcal B([a,b)),
	\end{displaymath}
	which proves the result.
\end{proof}

\begin{lem}[Variation of a Bochner density]
	\label{lem:variation-Bochner-density}
	Let $(X,\Sigma,\mu)$ be a finite measure space, let $Y$ be a Banach space,
	and let $v\in L^1(X,\mu;Y)$. Define
	\begin{displaymath}
		\nu_v(E)
		:=
		\int_E v\,\mathrm{d}\mu,
		\qquad E\in\Sigma.
	\end{displaymath}
	Then $\nu_v:\Sigma\to Y$ is a countably additive vector measure. For
	$E\in\Sigma$, let $\mathcal P(E)$ denote the family of all finite
	measurable partitions of $E$. Thus an element of $\mathcal P(E)$ is a
	finite family $\{E_1,\ldots,E_n\}$ of pairwise disjoint sets in
	$\Sigma$ whose union is $E$. The total variation of $\nu_v$ is defined by
	\begin{displaymath}
		|\nu_v|(E)
		:=
		\sup_{\{E_1,\ldots,E_n\}\in\mathcal P(E)}
		\sum_{i=1}^{n}\|\nu_v(E_i)\|_Y.
	\end{displaymath}
	Then
	\begin{equation}
		\label{eq:variation-Bochner-density}
		|\nu_v|(E)
		=
		\int_E\|v(s)\|_Y\,\mathrm{d}\mu(s),
		\qquad E\in\Sigma.
	\end{equation}
\end{lem}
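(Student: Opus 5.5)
Countable additivity of $\nu_v$ comes first and is routine. If $E=\bigsqcup_j E_j$, then $\mathbf 1_{\bigcup_{j\le N}E_j}v\to\mathbf 1_E v$ pointwise, with dominating function $\|v\|_Y\in L^1(\mu)$. The dominated convergence theorem for Bochner integrals therefore gives $\sum_{j\le N}\nu_v(E_j)\to\nu_v(E)$ in norm.

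For \eqref{eq:variation-Bochner-density}, the inequality ``$\leq$'' is immediate. For any finite partition $\{E_i\}$ of $E$, the standard estimate $\|\int_{E_i}v\,d\mu\|\le\int_{E_i}\|v\|\,d\mu$ recorded before Lemma~\ref{l7.6} gives $\sum_i\|\nu_v(E_i)\|_Y\le\int_E\|v\|_Y\,d\mu$. Taking the supremum over $\mathcal P(E)$ yields $|\nu_v|(E)\le\int_E\|v\|_Y\,d\mu$.

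The reverse inequality is the main step, and I will argue it by simple-function approximation. Fix $\varepsilon>0$. Strong measurability and integrability give a simple function $\varphi=\sum_{k=1}^m y_k\mathbf 1_{A_k}$ with $\int_X\|v-\varphi\|_Y\,d\mu<\varepsilon$. I can take it in this form by applying Lemma~\ref{lem:dominated-simple-approximation} and then dominated convergence for the scalar functions $\|v-f_n\|_Y\le 3\|v\|_Y$. Here the $A_k$ are pairwise disjoint sets in $\Sigma$. Now use the partition $E_k:=E\cap A_k$, together with $E\setminus\bigcup_k A_k$ if that set is nonempty. On $E_k$,
\[
\|\nu_v(E_k)\|_Y\ge\|y_k\|_Y\,\mu(E_k)-\int_{E_k}\|v-\varphi\|_Y\,d\mu
=\int_{E_k}\|\varphi\|_Y\,d\mu-\int_{E_k}\|v-\varphi\|_Y\,d\mu .
\]
Summing over $k$ and dropping the nonnegative contribution of the leftover set (on which $\varphi=0$), I get
\[
|\nu_v|(E)\ge\int_E\|\varphi\|_Y\,d\mu-\varepsilon\ge\int_E\|v\|_Y\,d\mu-2\varepsilon .
\]
Letting $\varepsilon\to0$ gives ``$\geq$''.

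The only point needing care is the approximation step: I need the simple functions to converge to $v$ in $L^1(\mu;Y)$, not merely almost everywhere. This follows from the pointwise domination $\|v-f_n\|_Y\le3\|v\|_Y$ in Lemma~\ref{lem:dominated-simple-approximation} and scalar dominated convergence. I must also make sure each $f_n$ is written with disjoint level sets, which always holds for the canonical representation of a simple function. The argument uses neither the Radon--Nikod\'ym property nor finiteness of $\mu$ beyond integrability of $v$; as a byproduct it shows $|\nu_v|$ is a finite measure with $|\nu_v|\ll\mu$.
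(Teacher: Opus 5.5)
Your proof is correct and follows the paper's strategy. The upper bound comes from $\|\int_{E_i}v\,d\mu\|_Y\le\int_{E_i}\|v\|_Y\,d\mu$, and the lower bound comes from approximating $v$ in $L^1$ by a simple function and partitioning $E$ along its level sets.

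The only difference is bookkeeping. The paper first proves exact equality for simple densities, then the stability estimate $\bigl||\nu|(E)-|\eta|(E)\bigr|\le|\nu-\eta|(E)$, and then passes to the limit. You instead apply the triangle inequality cell by cell to get $|\nu_v|(E)\ge\int_E\|v\|_Y\,d\mu-2\varepsilon$ directly, which is slightly shorter.
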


\begin{proof}
	We prove each assertion separately.
	
	\medskip
	\noindent
	\emph{Step 1: $\nu_v$ is well defined.}
	For every $E\in\Sigma$, the function $\mathbf{1}_E v$ is Bochner
	integrable because
	\begin{displaymath}
		\int_X\|\mathbf{1}_E(s)v(s)\|_Y\,\mathrm{d}\mu(s)
		=
		\int_E\|v(s)\|_Y\,\mathrm{d}\mu(s)
		\leq
		\int_X\|v(s)\|_Y\,\mathrm{d}\mu(s)
		<
		\infty.
	\end{displaymath}
	Thus $\nu_v(E)$ is well defined.
	
	\medskip
	\noindent
	\emph{Step 2: $\nu_v$ is countably additive.}
	Let $(E_j)_{j\geq1}$ be pairwise disjoint sets in $\Sigma$, and set
	\begin{displaymath}
		E:=\bigcup_{j=1}^{\infty}E_j.
	\end{displaymath}
	For every $N\geq1$, finite additivity of the Bochner integral gives
	\begin{displaymath}
		\sum_{j=1}^{N}\nu_v(E_j)
		=
		\int_{\bigcup_{j=1}^{N}E_j}v\,\mathrm{d}\mu.
	\end{displaymath}
	Consequently,
	\begin{displaymath}
		\begin{aligned}
			\left\|
			\nu_v(E)-\sum_{j=1}^{N}\nu_v(E_j)
			\right\|_Y
			&=
			\left\|
			\int_{\bigcup_{j>N}E_j}v\,\mathrm{d}\mu
			\right\|_Y
			\\
			&\leq
			\int_{\bigcup_{j>N}E_j}\|v(s)\|_Y\,\mathrm{d}\mu(s).
		\end{aligned}
	\end{displaymath}
	The scalar measure
	\begin{displaymath}
		\rho(F)
		:=
		\int_F\|v(s)\|_Y\,\mathrm{d}\mu(s)
	\end{displaymath}
	is finite. The sets $\bigcup_{j>N}E_j$ decrease to the empty set, so
	continuity from above gives
	\begin{displaymath}
		\int_{\bigcup_{j>N}E_j}\|v(s)\|_Y\,\mathrm{d}\mu(s)
		\longrightarrow0.
	\end{displaymath}
	Hence
	\begin{displaymath}
		\nu_v(E)
		=
		\sum_{j=1}^{\infty}\nu_v(E_j)
	\end{displaymath}
	with convergence in $Y$.
	
	\medskip
	\noindent
	\emph{Step 3: upper estimate for the total variation.}
	Let
	\begin{displaymath}
		E=\bigcup_{i=1}^{n}E_i
	\end{displaymath}
	be a finite measurable partition of $E$. Then
	\begin{displaymath}
		\begin{aligned}
			\sum_{i=1}^{n}\|\nu_v(E_i)\|_Y
			&=
			\sum_{i=1}^{n}
			\left\|
			\int_{E_i}v\,\mathrm{d}\mu
			\right\|_Y
			\\
			&\leq
			\sum_{i=1}^{n}
			\int_{E_i}\|v(s)\|_Y\,\mathrm{d}\mu(s)
			\\
			&=
			\int_E\|v(s)\|_Y\,\mathrm{d}\mu(s).
		\end{aligned}
	\end{displaymath}
	Taking the supremum over all finite measurable partitions of $E$ gives
	\begin{displaymath}
		|\nu_v|(E)
		\leq
		\int_E\|v(s)\|_Y\,\mathrm{d}\mu(s).
	\end{displaymath}
	In particular, $\nu_v$ has bounded variation.
	
	\medskip
	\noindent
	\emph{Step 4: equality for simple functions.}
	Assume that
	\begin{displaymath}
		v
		=
		\sum_{j=1}^{m}y_j\mathbf{1}_{F_j},
	\end{displaymath}
	where $y_j\in Y$ and the measurable sets $F_j$ are pairwise disjoint.
	For a fixed $E\in\Sigma$, the sets
	\begin{displaymath}
		E\cap F_1,\ldots,E\cap F_m,
		\qquad
		E\setminus\bigcup_{j=1}^{m}F_j
	\end{displaymath}
	form a finite measurable partition of $E$, after empty sets are discarded.
	For every $j$,
	\begin{displaymath}
		\nu_v(E\cap F_j)
		=
		y_j\mu(E\cap F_j),
	\end{displaymath}
	and therefore
	\begin{displaymath}
		\|\nu_v(E\cap F_j)\|_Y
		=
		\|y_j\|_Y\mu(E\cap F_j).
	\end{displaymath}
	The vector measure vanishes on
	$E\setminus\bigcup_{j=1}^{m}F_j$. Hence, by the definition of total
	variation,
	\begin{displaymath}
		\begin{aligned}
			|\nu_v|(E)
			&\geq
			\sum_{j=1}^{m}\|\nu_v(E\cap F_j)\|_Y
			\\
			&=
			\sum_{j=1}^{m}\|y_j\|_Y\mu(E\cap F_j)
			\\
			&=
			\int_E\|v(s)\|_Y\,\mathrm{d}\mu(s).
		\end{aligned}
	\end{displaymath}
	Combining this inequality with Step 3 proves
	\begin{displaymath}
		|\nu_v|(E)
		=
		\int_E\|v(s)\|_Y\,\mathrm{d}\mu(s)
	\end{displaymath}
	for every simple function $v$.
	
	\medskip
	\noindent
	\emph{Step 5: stability of total variation.}
	Let $\nu$ and $\eta$ be $Y$-valued vector measures of bounded variation.
	For every finite measurable partition $E=\bigcup_{i=1}^{n}E_i$,
	\begin{displaymath}
		\begin{aligned}
			\sum_{i=1}^{n}\|\nu(E_i)\|_Y
			&\leq
			\sum_{i=1}^{n}\|\nu(E_i)-\eta(E_i)\|_Y
			+
			\sum_{i=1}^{n}\|\eta(E_i)\|_Y
			\\
			&\leq
			|\nu-\eta|(E)+|\eta|(E).
		\end{aligned}
	\end{displaymath}
	Taking the supremum over all such partitions gives
	\begin{displaymath}
		|\nu|(E)
		\leq
		|\nu-\eta|(E)+|\eta|(E).
	\end{displaymath}
	Thus
	\begin{displaymath}
		|\nu|(E)-|\eta|(E)
		\leq
		|\nu-\eta|(E).
	\end{displaymath}
	Interchanging $\nu$ and $\eta$ yields
	\begin{displaymath}
		|\eta|(E)-|\nu|(E)
		\leq
		|\nu-\eta|(E).
	\end{displaymath}
	Therefore,
	\begin{displaymath}
		\bigl||\nu|(E)-|\eta|(E)\bigr|
		\leq
		|\nu-\eta|(E).
	\end{displaymath}
	
	\medskip
	\noindent
	\emph{Step 6: passage to a general Bochner integrable density.}
	Choose simple functions $v_n$ such that
	\begin{displaymath}
		\|v_n-v\|_{L^1(X,\mu;Y)}
		\longrightarrow0.
	\end{displaymath}
	For every $E\in\Sigma$, Step 4 gives
	\begin{displaymath}
		|\nu_{v_n}|(E)
		=
		\int_E\|v_n(s)\|_Y\,\mathrm{d}\mu(s).
	\end{displaymath}
	Moreover,
	\begin{displaymath}
		\nu_v-\nu_{v_n}
		=
		\nu_{v-v_n}.
	\end{displaymath}
	By Step 3, applied to $v-v_n$,
	\begin{displaymath}
		|\nu_v-\nu_{v_n}|(E)
		\leq
		\int_E\|v(s)-v_n(s)\|_Y\,\mathrm{d}\mu(s).
	\end{displaymath}
	Using Step 5, we obtain
	\begin{displaymath}
		\begin{aligned}
			\bigl||\nu_v|(E)-|\nu_{v_n}|(E)\bigr|
			&\leq
			|\nu_v-\nu_{v_n}|(E)
			\\
			&\leq
			\int_E\|v(s)-v_n(s)\|_Y\,\mathrm{d}\mu(s)
			\\
			&\leq
			\|v-v_n\|_{L^1(X,\mu;Y)}.
		\end{aligned}
	\end{displaymath}
	Hence
	\begin{displaymath}
		|\nu_{v_n}|(E)
		\longrightarrow
		|\nu_v|(E).
	\end{displaymath}
	On the other hand, the reverse triangle inequality gives
	\begin{displaymath}
		\bigl|\|v_n(s)\|_Y-\|v(s)\|_Y\bigr|
		\leq
		\|v_n(s)-v(s)\|_Y
	\end{displaymath}
	for almost every $s\in X$. Therefore,
	\begin{displaymath}
		\begin{aligned}
			&\left|
			\int_E\|v_n(s)\|_Y\,\mathrm{d}\mu(s)
			-
			\int_E\|v(s)\|_Y\,\mathrm{d}\mu(s)
			\right|
			\\
			&\qquad\leq
			\int_E\|v_n(s)-v(s)\|_Y\,\mathrm{d}\mu(s)
			\longrightarrow0.
		\end{aligned}
	\end{displaymath}
	Passing to the limit in
	\begin{displaymath}
		|\nu_{v_n}|(E)
		=
		\int_E\|v_n(s)\|_Y\,\mathrm{d}\mu(s)
	\end{displaymath}
	gives
	\begin{displaymath}
		|\nu_v|(E)
		=
		\int_E\|v(s)\|_Y\,\mathrm{d}\mu(s).
	\end{displaymath}
	This proves \eqref{eq:variation-Bochner-density}.
\end{proof}

A left-continuous curve $u:[a,b]\to Y$ of bounded variation defines
an additive set function on the algebra of finite disjoint unions of
half-open intervals by setting
\begin{displaymath}
	\nu_u([c,d)):=u(d)-u(c).
\end{displaymath}
The bounded variation of $u$ provides a uniform bound for the variation
of this set function and allows it to be extended to a vector measure.
The use of intervals of the form $[c,d)$ is consistent with the
left-continuous convention adopted throughout the paper. In particular,
the mass assigned to a singleton $\{t\}$ is
\begin{displaymath}
	\nu_u(\{t\})=u(t^+)-u(t)=\Delta^+u(t),
\end{displaymath}
so atoms record right jumps. Applied to the derivator $g$, this convention
gives
\begin{displaymath}
	\mu_g(\{t\})=\Delta^+g(t).
\end{displaymath}

\begin{pro}[Vector measure associated with a left-continuous BV curve]
	\label{prop:BV-vector-measure}
	Let $Y$ be a Banach space, let $u:[a,b]\to Y$ be left-continuous and of
	bounded variation, and define
	\begin{displaymath}
		V(t)=\operatorname{Var}_{[a,t]}(u),\qquad t\in[a,b].
	\end{displaymath}
	There is a unique countably additive $Y$-valued Borel measure $\nu_u$ on
	$[a,b)$ such that
	\begin{equation}\label{eq:BV-vector-intervals}
		\nu_u([c,d))=u(d)-u(c),
		\qquad a\leq c<d\leq b.
	\end{equation}
	It has bounded variation and
	\begin{equation}\label{eq:BV-vector-variation}
		|\nu_u|=\mu_V,
	\end{equation}
	where $\mu_V$ is the positive Lebesgue--Stieltjes measure determined by
	$\mu_V([c,d))=V(d)-V(c)$.
\end{pro}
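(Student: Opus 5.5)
We first define $\nu_u$ on the interval algebra $\mathcal A$ of Lemma~\ref{lem:interval-algebra-density} by $\nu_u(\bigcup_i[c_i,d_i))=\sum_i(u(d_i)-u(c_i))$. Well-definedness and finite additivity follow from telescoping over a common refinement. Next, observe that $V$ is nondecreasing and left-continuous. Left-continuity comes from left-continuity of $u$ by the standard argument: if $V(t^-)<V(t)-\eta$, a partition of $[a,t]$ almost realizing $V(t)$, with its last interior point moved close to $t$, gives a contradiction with $\|u(t)-u(s)\|\to0$ as $s\uparrow t$. Hence $\mu_V$ is a well-defined finite Borel measure with $\mu_V([c,d))=V(d)-V(c)=\operatorname{Var}_{[c,d]}(u)$, using additivity of variation. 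Consequently, for every $A\in\mathcal A$,
\begin{displaymath}
 \|\nu_u(A)\|_Y\le\mu_V(A).
\end{displaymath}

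\textbf{Countable additivity on $\mathcal A$ and extension.} The domination above gives countable additivity of $\nu_u$ on $\mathcal A$ as soon as $\mu_V$ is countably additive there, which it is. Indeed, if $A=\bigcup_j A_j$ is a disjoint union in $\mathcal A$, then
\begin{displaymath}
 \Bigl\|\nu_u(A)-\sum_{j\le N}\nu_u(A_j)\Bigr\|_Y=\Bigl\|\nu_u\Bigl(A\setminus\bigcup_{j\le N}A_j\Bigr)\Bigr\|_Y\le\mu_V\Bigl(A\setminus\bigcup_{j\le N}A_j\Bigr)\to0.
\end{displaymath}
To extend to Borel sets I will use Lemma~\ref{lem:interval-algebra-density} with $\lambda=\mu_V$. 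For a Borel set $E$, choose $A_n\in\mathcal A$ with $\mu_V(E\triangle A_n)\to0$. Then
\begin{displaymath}
 \|\nu_u(A_n)-\nu_u(A_m)\|_Y\le\mu_V(A_n\triangle A_m),
\end{displaymath}
because $\nu_u(A_n)-\nu_u(A_m)=\nu_u(A_n\setminus A_m)-\nu_u(A_m\setminus A_n)$. So $(\nu_u(A_n))$ is Cauchy, and we define $\nu_u(E)$ as its limit, which is independent of the chosen sequence. The estimate $\|\nu_u(E)\|_Y\le\mu_V(E)$ passes to the limit. Finite additivity passes to the limit by approximating disjoint sets and disjointifying inside $\mathcal A$. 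Countable additivity then follows from finite additivity together with $\|\nu_u(F)\|_Y\le\mu_V(F)$ and continuity from above of $\mu_V$, exactly as in Step~2 of Lemma~\ref{lem:variation-Bochner-density}. For uniqueness, any other countably additive $\tilde\nu$ with \eqref{eq:BV-vector-intervals} agrees with $\nu_u$ on $\mathcal A$. Then for every $y'\in Y'$ the scalar measures $y'\circ\tilde\nu$ and $y'\circ\nu_u$ agree on the $\pi$-system generating $\mathcal B([a,b))$; by Dynkin's theorem, applied to real and imaginary parts via their finite signed decompositions, they coincide, and Hahn--Banach gives $\tilde\nu=\nu_u$.

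\textbf{Variation identity.} The inequality $\|\nu_u(F)\|_Y\le\mu_V(F)$ for all Borel $F$ yields $|\nu_u|\le\mu_V$ immediately. For the reverse inequality, on $[c,d)$ take a partition $c=t_0<\dots<t_n=d$. Then
\begin{displaymath}
 |\nu_u|([c,d))\ge\sum_i\|\nu_u([t_{i-1},t_i))\|_Y=\sum_i\|u(t_i)-u(t_{i-1})\|_Y,
\end{displaymath}
and taking the supremum gives $|\nu_u|([c,d))\ge\operatorname{Var}_{[c,d]}(u)=\mu_V([c,d))$. Hence $|\nu_u|$ and $\mu_V$ agree on half-open intervals. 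Both are finite positive measures, since $|\nu_u|$ is countably additive by standard vector-measure theory \cite{DiestelUhl1977}, so they agree on $\mathcal B([a,b))$ by uniqueness of extension.

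\textbf{Main obstacle.} I expect the delicate point to be the left-continuity of $V$ and the boundary at $b$: intervals $[c,b)$ must carry $u(b)-u(c)$ while $\mu_V([c,b))=V(b)-V(c)$. This needs $V(b^-)=V(b)$, that is, left-continuity of $V$ at $b$, and that argument must be carried out carefully from left-continuity of $u$.
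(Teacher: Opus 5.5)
Your proposal is correct and follows the paper's proof essentially step for step: the definition on the interval algebra with the domination $\|\nu_u(A)\|_Y\le\mu_V(A)$, left-continuity of $V$ including at $b$ (where you should \emph{insert}, rather than move, a point near $t$, since only a refinement is guaranteed not to decrease the variation sum; the paper does exactly this), the Cauchy extension via Lemma~\ref{lem:interval-algebra-density}, uniqueness by Dynkin's theorem applied to scalarizations, and the comparison of $|\nu_u|$ with $\mu_V$ on half-open intervals. The only real difference is that you quote the countable additivity of $|\nu_u|$ from standard vector-measure theory, whereas the paper verifies it directly by splitting and combining finite partitions.
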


\begin{proof}
	The additivity of the variation on adjacent intervals gives
	\begin{displaymath}
		V(d)-V(c)=\operatorname{Var}_{[c,d]}(u).
	\end{displaymath}
	Since $u$ is left-continuous, so is $V$.  Indeed, if $t_n\uparrow t$, then
	$V(t_n)\leq V(t)$ and the increasing limit $L=\lim_nV(t_n)$ exists.  Given a
	partition $a=r_0<\cdots<r_m=t$, choose $n$ so large that $t_n>r_{m-1}$.
	The triangle inequality gives
	\begin{displaymath}
		\sum_{j=1}^{m}\|u(r_j)-u(r_{j-1})\|_Y
		\leq V(t_n)+\|u(t)-u(t_n)\|_Y.
	\end{displaymath}
	After letting $n\to\infty$ and then taking the supremum over the partitions,
	we obtain $V(t)\leq L$.  Hence $V(t_n)\to V(t)$, and the usual
	Lebesgue--Stieltjes construction with half-open intervals defines $\mu_V$.
	
	Let $\mathcal A$ be the algebra of finite disjoint unions of half-open
	subintervals of $[a,b)$.  For
	$A=\bigcup_{j=1}^m[c_j,d_j)$, define
	\begin{displaymath}
		\nu_0(A)=\sum_{j=1}^m\bigl(u(d_j)-u(c_j)\bigr).
	\end{displaymath}
	We first verify that this definition is independent of the chosen interval
	decomposition. Suppose that the same set $A$ is represented as
	\begin{displaymath}
		A=\bigcup_{j=1}^m[c_j,d_j)
		=\bigcup_{k=1}^n[\alpha_k,\beta_k),
	\end{displaymath}
	where the intervals in each family are pairwise disjoint. Collect all the
	endpoints $c_j,d_j,\alpha_k,\beta_k$ and arrange the distinct ones in
	increasing order. The consecutive half-open intervals determined by these
	points form a common refinement of both decompositions. Refining an interval
	does not change its contribution, because for
	\begin{displaymath}
		c=q_0<q_1<\cdots<q_N=d
	\end{displaymath}
	the sum telescopes:
	\begin{displaymath}
		\sum_{\ell=1}^N
		\bigl(u(q_\ell)-u(q_{\ell-1})\bigr)
		=u(d)-u(c).
	\end{displaymath}
	Thus both decompositions produce the same value, and $\nu_0$ is well
	defined.
	
	The same common-refinement argument proves finite additivity. Indeed, if
	$A,B\in\mathcal A$ are disjoint, refine interval decompositions of $A$ and
	$B$ by all their endpoints. The resulting intervals form a disjoint interval
	decomposition of $A\cup B$, and the defining sum splits into the sum over
	$A$ and the sum over $B$. Hence
	\begin{displaymath}
		\nu_0(A\cup B)=\nu_0(A)+\nu_0(B).
	\end{displaymath}
	Moreover,
	\begin{equation}\label{eq:nu0-dominated-by-V}
		\|\nu_0(A)\|_Y
		\leq\sum_{j=1}^m\operatorname{Var}_{[c_j,d_j]}(u)
		=\mu_V(A).
	\end{equation}
	If $A_n\downarrow\varnothing$ in $\mathcal A$, then
	$\mu_V(A_n)\downarrow0$, and \eqref{eq:nu0-dominated-by-V} gives
	$\nu_0(A_n)\to0$.  This continuity at the empty set proves that $\nu_0$ is a
	vector premeasure.  Indeed, if $A=\bigcup_{j\geq1}A_j$ is a disjoint union
	with $A,A_j\in\mathcal A$, then
	\begin{displaymath}
		R_N:=A\setminus\bigcup_{j=1}^N A_j\downarrow\varnothing
	\end{displaymath}
	and finite additivity gives
	$\nu_0(A)-\sum_{j=1}^N\nu_0(A_j)=\nu_0(R_N)\to0$.
	
	The extension can be described explicitly. By
	Lemma~\ref{lem:interval-algebra-density}, for every Borel set $E$ there is a
	sequence $(A_n)$ in $\mathcal A$ such that
	\begin{displaymath}
		\mu_V(E\mathbin\triangle A_n)\longrightarrow0.
	\end{displaymath}
	For $n,m\in\mathbb N$, finite additivity on the algebra gives
	\begin{displaymath}
		\nu_0(A_n)-\nu_0(A_m)
		=\nu_0(A_n\setminus A_m)-\nu_0(A_m\setminus A_n).
	\end{displaymath}
	Indeed, decompose both $A_n$ and $A_m$ into their common intersection and
	their respective differences; the contributions of $A_n\cap A_m$ cancel.
	Using \eqref{eq:nu0-dominated-by-V}, we obtain
	\begin{displaymath}
		\|\nu_0(A_n)-\nu_0(A_m)\|_Y
		\leq\mu_V(A_n\mathbin\triangle A_m).
	\end{displaymath}
	Furthermore,
	\begin{displaymath}
		A_n\mathbin\triangle A_m
		\subset
		(A_n\mathbin\triangle E)\cup(E\mathbin\triangle A_m),
	\end{displaymath}
	so the right-hand side tends to zero. Hence $(\nu_0(A_n))$ is Cauchy in
	the Banach space $Y$.
	
	Define
	\begin{displaymath}
		\nu_u(E):=\lim_{n\to\infty}\nu_0(A_n).
	\end{displaymath}
	This definition is independent of the approximating sequence. Indeed, if
	$(B_n)\subset\mathcal A$ also satisfies
	$\mu_V(E\mathbin\triangle B_n)\to0$, then
	\begin{displaymath}
		\|\nu_0(A_n)-\nu_0(B_n)\|_Y
		\leq
		\mu_V(A_n\mathbin\triangle E)
		+\mu_V(E\mathbin\triangle B_n)
		\longrightarrow0.
	\end{displaymath}
	If $E\in\mathcal A$, the constant sequence $A_n=E$ shows that
	$\nu_u(E)=\nu_0(E)$, so the extension preserves the interval formula.
	
	Moreover,
	\begin{displaymath}
		|\mu_V(A_n)-\mu_V(E)|
		\leq\mu_V(A_n\mathbin\triangle E)\longrightarrow0.
	\end{displaymath}
	Passing to the limit in \eqref{eq:nu0-dominated-by-V} gives
	\begin{equation}\label{eq:nu-dominated-by-V}
		\|\nu_u(E)\|_Y\leq\mu_V(E),
		\qquad E\in\mathcal B([a,b)).
	\end{equation}
	
	We next prove finite additivity. Let $E$ and $F$ be disjoint Borel sets and
	choose $A_n,B_n\in\mathcal A$ approximating $E$ and $F$, respectively.
	Set
	\begin{displaymath}
		C_n:=A_n\setminus B_n,
		\qquad
		D_n:=B_n\setminus A_n.
	\end{displaymath}
	Then $C_n,D_n\in\mathcal A$ and $C_n\cap D_n=\varnothing$. Since
	$E\cap F=\varnothing$, one checks directly that
	\begin{displaymath}
		E\mathbin\triangle C_n
		\subset
		(E\mathbin\triangle A_n)\cup(F\mathbin\triangle B_n)
	\end{displaymath}
	and, similarly,
	\begin{displaymath}
		F\mathbin\triangle D_n
		\subset
		(F\mathbin\triangle B_n)\cup(E\mathbin\triangle A_n).
	\end{displaymath}
	Thus $C_n$ approximates $E$ and $D_n$ approximates $F$. Their union
	approximates $E\cup F$. Finite additivity of $\nu_0$ gives
	\begin{displaymath}
		\nu_0(C_n\cup D_n)=\nu_0(C_n)+\nu_0(D_n),
	\end{displaymath}
	and passage to the limit yields
	\begin{displaymath}
		\nu_u(E\cup F)=\nu_u(E)+\nu_u(F).
	\end{displaymath}
	Hence $\nu_u$ is finitely additive.
	
	Let now $(E_j)$ be pairwise disjoint and set
	\begin{displaymath}
		E:=\bigcup_{j=1}^{\infty}E_j.
	\end{displaymath}
	For every $N$, finite additivity and \eqref{eq:nu-dominated-by-V} give
	\begin{displaymath}
		\left\|\nu_u(E)-\sum_{j=1}^{N}\nu_u(E_j)\right\|_Y
		\leq
		\mu_V\left(\bigcup_{j>N}E_j\right).
	\end{displaymath}
	The sets on the right decrease to the empty set, and $\mu_V$ is finite;
	therefore the right-hand side tends to zero. This proves countable
	additivity.
	
	To prove uniqueness, let $\widehat\nu$ be another countably additive
	$Y$-valued Borel measure satisfying
	\eqref{eq:BV-vector-intervals}. By finite additivity, $\nu_u$ and
	$\widehat\nu$ agree on the algebra $\mathcal A$ of finite disjoint
	unions of half-open intervals.
	
	Recall that a nonempty family $\mathcal P$ of subsets of a set $X$ is
	called a $\pi$-system if it is closed under finite intersections. A
	$\lambda$-system, also called a Dynkin system, is a family $\mathcal D$
	of subsets of $X$ such that $X\in\mathcal D$, such that
	$F\setminus E\in\mathcal D$ whenever $E,F\in\mathcal D$ and $E\subset F$,
	and such that $\mathcal D$ is closed under countable disjoint unions.
	Dynkin's $\pi$--$\lambda$ theorem states that every $\lambda$-system
	containing a $\pi$-system $\mathcal P$ also contains the generated
	$\sigma$-algebra $\sigma(\mathcal P)$; see
	\cite[Ch.~1]{Kallenberg2002}. Since $\mathcal A$ is an algebra, it is in
	particular a $\pi$-system, and
	\begin{displaymath}
		\sigma(\mathcal A)=\mathcal B([a,b)).
	\end{displaymath}
	
	Fix $y'\in Y'$ and define the finite scalar signed or complex measure
	\begin{displaymath}
		\rho_{y'}(E)
		:=
		\left\langle y',\nu_u(E)-\widehat\nu(E)\right\rangle,
		\qquad E\in\mathcal B([a,b)).
	\end{displaymath}
	Since $\nu_u$ and $\widehat\nu$ agree on $\mathcal A$, one has
	\begin{displaymath}
		\rho_{y'}(A)=0,
		\qquad A\in\mathcal A.
	\end{displaymath}
	Consider
	\begin{displaymath}
		\mathcal D_{y'}
		:=
		\left\{E\in\mathcal B([a,b)):\rho_{y'}(E)=0\right\}.
	\end{displaymath}
	We verify that $\mathcal D_{y'}$ is a $\lambda$-system. First,
	$[a,b)\in\mathcal A$, and hence
	\begin{displaymath}
		\rho_{y'}([a,b))=0.
	\end{displaymath}
	Thus $[a,b)\in\mathcal D_{y'}$. Second, if
	$E,F\in\mathcal D_{y'}$ and $E\subset F$, then
	\begin{displaymath}
		\rho_{y'}(F\setminus E)
		=
		\rho_{y'}(F)-\rho_{y'}(E)
		=0,
	\end{displaymath}
	so $F\setminus E\in\mathcal D_{y'}$. Third, if $(E_n)_{n\geq1}$ is a
	sequence of pairwise disjoint sets in $\mathcal D_{y'}$, countable
	additivity gives
	\begin{displaymath}
		\rho_{y'}\left(\bigcup_{n=1}^{\infty}E_n\right)
		=
		\sum_{n=1}^{\infty}\rho_{y'}(E_n)
		=0.
	\end{displaymath}
	Therefore
	\begin{displaymath}
		\bigcup_{n=1}^{\infty}E_n\in\mathcal D_{y'}.
	\end{displaymath}
	This proves that $\mathcal D_{y'}$ is a $\lambda$-system.
	
	Because $\mathcal A\subset\mathcal D_{y'}$, Dynkin's
	$\pi$--$\lambda$ theorem yields
	\begin{displaymath}
		\mathcal B([a,b))
		=
		\sigma(\mathcal A)
		\subset
		\mathcal D_{y'}.
	\end{displaymath}
	Hence, for every $E\in\mathcal B([a,b))$,
	\begin{displaymath}
		\left\langle y',\nu_u(E)-\widehat\nu(E)\right\rangle=0.
	\end{displaymath}
	Since this holds for every $y'\in Y'$ and $Y'$ separates the points of
	$Y$, it follows that
	\begin{displaymath}
		\nu_u(E)=\widehat\nu(E),
		\qquad E\in\mathcal B([a,b)).
	\end{displaymath}
	Thus $\nu_u=\widehat\nu$. This is the standard vector-measure extension
	construction; compare \cite[Ch.~I, Sects.~1--2]{DiestelUhl1977}.

	We finally identify the total variation. Recall that, for a Borel set $E$,
	\begin{displaymath}
		|\nu_u|(E)
		:=
		\sup
		\sum_{j=1}^{m}\|\nu_u(E_j)\|_Y,
	\end{displaymath}
	where the supremum is taken over all finite Borel partitions
	$E=\bigcup_{j=1}^{m}E_j$ into pairwise disjoint sets. For every such
	partition, \eqref{eq:nu-dominated-by-V} gives
	\begin{displaymath}
		\sum_{j=1}^{m}\|\nu_u(E_j)\|_Y
		\leq
		\sum_{j=1}^{m}\mu_V(E_j)
		=\mu_V(E).
	\end{displaymath}
	Taking the supremum yields
	\begin{displaymath}
		|\nu_u|(E)\leq\mu_V(E).
	\end{displaymath}
	In particular, $|\nu_u|([a,b))<\infty$, so $\nu_u$ has bounded
	variation. We now verify directly that $|\nu_u|$ is countably additive.
	Let $(G_k)_{k\geq1}$ be pairwise disjoint Borel sets and set
	\begin{displaymath}
		G:=\bigcup_{k=1}^{\infty}G_k.
	\end{displaymath}
	Fix $N\in\mathbb N$ and $\eta>0$. For each
	$k\in\{1,\ldots,N\}$, choose a finite Borel partition of $G_k$ whose
	variation sum is larger than
	\begin{displaymath}
		|\nu_u|(G_k)-\frac{\eta}{N}.
	\end{displaymath}
	Combining these finitely many partitions and adding the remainder
	\begin{displaymath}
		G\setminus\bigcup_{k=1}^{N}G_k
	\end{displaymath}
	produces a finite Borel partition of $G$. Hence
	\begin{displaymath}
		|\nu_u|(G)
		\geq
		\sum_{k=1}^{N}|\nu_u|(G_k)-\eta.
	\end{displaymath}
	Letting $\eta\downarrow0$ and then $N\to\infty$ gives
	\begin{displaymath}
		|\nu_u|(G)
		\geq
		\sum_{k=1}^{\infty}|\nu_u|(G_k).
	\end{displaymath}
	
	For the reverse inequality, let
	\begin{displaymath}
		G=\bigcup_{i=1}^{m}F_i
	\end{displaymath}
	be an arbitrary finite Borel partition. Countable additivity of $\nu_u$
	gives, for each $i$,
	\begin{displaymath}
		\nu_u(F_i)
		=
		\sum_{k=1}^{\infty}\nu_u(F_i\cap G_k)
	\end{displaymath}
	with convergence in $Y$. Applying the triangle inequality to the partial
	sums and then passing to the limit yields
	\begin{displaymath}
		\|\nu_u(F_i)\|_Y
		\leq
		\sum_{k=1}^{\infty}\|\nu_u(F_i\cap G_k)\|_Y.
	\end{displaymath}
	Summing over $i$ and interchanging the finite sum with the nonnegative
	series, we obtain
	\begin{displaymath}
		\begin{aligned}
			\sum_{i=1}^{m}\|\nu_u(F_i)\|_Y
			&\leq
			\sum_{k=1}^{\infty}
			\sum_{i=1}^{m}\|\nu_u(F_i\cap G_k)\|_Y
			\\
			&\leq
			\sum_{k=1}^{\infty}|\nu_u|(G_k).
		\end{aligned}
	\end{displaymath}
	Taking the supremum over all finite Borel partitions of $G$ gives
	\begin{displaymath}
		|\nu_u|(G)
		\leq
		\sum_{k=1}^{\infty}|\nu_u|(G_k).
	\end{displaymath}
	Thus $|\nu_u|$ is a finite positive Borel measure.
	
	Fix $a\leq c<d\leq b$ and let
	\begin{displaymath}
		c=t_0<t_1<\cdots<t_m=d
	\end{displaymath}
	be an arbitrary point partition. The intervals $[t_{j-1},t_j)$ form a
	finite Borel partition of $[c,d)$. Therefore,
	\begin{displaymath}
		\begin{aligned}
			|\nu_u|([c,d))
			&\geq
			\sum_{j=1}^{m}\|\nu_u([t_{j-1},t_j))\|_Y
			\\
			&=
			\sum_{j=1}^{m}\|u(t_j)-u(t_{j-1})\|_Y.
		\end{aligned}
	\end{displaymath}
	Taking the supremum over all point partitions gives
	\begin{displaymath}
		|\nu_u|([c,d))
		\geq
		\operatorname{Var}_{[c,d]}(u)
		=\mu_V([c,d)).
	\end{displaymath}
	Together with the previously proved upper bound, this yields
	\begin{displaymath}
		|\nu_u|([c,d))=\mu_V([c,d))
	\end{displaymath}
	for every half-open interval. The two finite positive Borel measures
	$|\nu_u|$ and $\mu_V$ therefore agree on the algebra $\mathcal A$.
	Since $\mathcal A$ generates $\mathcal B([a,b))$, the uniqueness theorem
	for finite measures gives
	\begin{displaymath}
		|\nu_u|(E)=\mu_V(E)
	\end{displaymath}
	for every Borel set $E$. This proves
	\eqref{eq:BV-vector-variation}.
\end{proof}

\begin{thm}[Fundamental theorem of Stieltjes--Bochner calculus]\label{ftc}
Let $g:\mathbb R\to\mathbb R$ be a derivator and let
$(Y,\|\cdot\|)$ be a Banach space with the Radon--Nikod\'ym property. A
function $u:[a,b]\to Y$ belongs to $AC_g([a,b];Y)$ if and only if all of the
following conditions are satisfied:
\begin{enumerate}
\item the strong $g$-derivative $u'_g(t)\in Y$ exists for $g$-almost every
$t\in[a,b)$;
\item $u'_g\in L_g^1([a,b);Y)$;
\item
\begin{displaymath}
 u(t)=u(a)+\int_{[a,t)}u'_g(s)\,d\mu_g(s),
 \qquad t\in[a,b].
\end{displaymath}
\end{enumerate}
In this case the variation function
\begin{displaymath}
 V_u(t):=\operatorname{Var}_{[a,t]}(u),\qquad t\in[a,b],
\end{displaymath}
belongs to $AC_g([a,b])$ and satisfies
\begin{equation}\label{eq:metric-g-derivative}
 (V_u)'_g(t)=\|u'_g(t)\|_Y
 \quad\text{for $g$-almost every }t\in[a,b).
\end{equation}
\end{thm}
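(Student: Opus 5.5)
The sufficiency direction is immediate from Theorem~\ref{t2.9}. If conditions 1--3 hold, then $u-u(a)$ is the indefinite Bochner integral of $u'_g\in L_g^1([a,b);Y)$, so $u\in AC_g([a,b];Y)$. The Radon--Nikod\'ym property is not needed here.

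For necessity, let $u\in AC_g([a,b];Y)$. We first gather regularity. By Proposition~\ref{prop:ACg-BPV}, $u$ has bounded variation. By Lemma~\ref{lem:ACg-Cg}, $u$ is $g$-continuous. Since $g$ is left-continuous, $g(s)\uparrow g(t)$ as $s\uparrow t$, and $g$-continuity then gives left-continuity of $u$ on $(a,b]$. Proposition~\ref{prop:BV-vector-measure} therefore applies and produces the countably additive vector measure $\nu_u$ with $\nu_u([c,d))=u(d)-u(c)$ and $|\nu_u|=\mu_V$, where $V=V_u$.

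The key step is to show $\nu_u\ll\mu_g$, and we will obtain it through $\mu_V\ll\mu_g$. We first check that $V$ inherits $g$-absolute continuity. Take the $\delta$ from Definition~\ref{defgabs} for $\varepsilon$, and disjoint intervals $(c_i,d_i)$ with $\sum(g(d_i)-g(c_i))<\delta$. Refine each $[c_i,d_i]$ by an arbitrary partition; the refined family still has total $g$-length $<\delta$. Taking suprema gives $\sum (V(d_i)-V(c_i))\le\varepsilon$, so $V\in AC_g([a,b])$.

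Next we pass from intervals to Borel sets. Let $E$ be Borel with $\mu_g(E)=0$. Using outer regularity of $\mu_g$, or Lemma~\ref{lem:interval-algebra-density} applied to $\mu_g+\mu_V$, cover $E$ by a countable disjoint union of half-open intervals $[c_i,d_i)$ of total $\mu_g$-mass $<\delta$. Truncating to finitely many intervals gives $\mu_V(\bigcup_{i\le n}[c_i,d_i))\le\varepsilon$. Letting $n\to\infty$ yields $\mu_V(E)\le\varepsilon$, and since $\varepsilon$ is arbitrary, $\mu_V(E)=0$.

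The main obstacle is the interval bookkeeping in this covering argument. Half-open intervals $[c,d)$ carry the atom at $c$, while Definition~\ref{defgabs} uses the endpoint values $g(d)-g(c)$. These match precisely because $\mu_g([c,d))=g(d)-g(c)$, and this identity is what we will rely on. It follows that $\|\nu_u(E)\|\le\mu_V(E)=0$, so $\nu_u\ll\mu_g$.

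Now the Radon--Nikod\'ym property gives a density $v\in L_g^1([a,b);Y)$ with $\nu_u(A)=\int_A v\,d\mu_g$. Evaluating at $A=[a,t)$ gives $u(t)=u(a)+\int_{[a,t)}v\,d\mu_g$ for $t\in(a,b]$; the case $t=a$ is trivial. Theorem~\ref{t2.9} then shows that $u'_g=v$ exists $g$-almost everywhere, which yields conditions 1--3.

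For the variation statement, $|\nu_u|=\mu_V$ by Proposition~\ref{prop:BV-vector-measure}, while Lemma~\ref{lem:variation-Bochner-density} gives $|\nu_u|(E)=\int_E\|v\|\,d\mu_g$. Taking $E=[a,t)$ gives
\[
V_u(t)=\int_{[a,t)}\|u'_g\|_Y\,d\mu_g,
\]
using $V_u(a)=0$. The scalar differentiation theorem (Theorem~\ref{t2.9} with $V=\mathbb R$) then yields \eqref{eq:metric-g-derivative} and $V_u\in AC_g([a,b])$.
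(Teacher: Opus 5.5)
Your proof is correct. Its skeleton matches the paper's: the vector measure $\nu_u$ with $|\nu_u|=\mu_V$, absolute continuity with respect to $\mu_g$, the RNP density $v$, then Theorem~\ref{t2.9} and Lemma~\ref{lem:variation-Bochner-density}. The genuine differences lie in the two scalar steps.

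\textbf{How the paper does them.} It applies the scalar fundamental theorem, Theorem~\ref{t5.4}, to $V$, which gives $V(t)=\int_{[a,t)}V'_g\,d\mu_g$. It extends the identity $\mu_V([c,d))=\int_{[c,d)}V'_g\,d\mu_g$ from intervals to all Borel sets by uniqueness of finite measures, and reads off $\mu_V\ll\mu_g$. At the end it identifies $\|v\|_Y=V'_g$ through uniqueness of scalar Radon--Nikod\'ym densities.

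\textbf{How you do them.} You prove $\mu_V\ll\mu_g$ directly, from the $\varepsilon$--$\delta$ property of $V$ and an interval approximation of a $\mu_g$-null set. You then obtain \eqref{eq:metric-g-derivative} by writing $V_u(t)=|\nu_u|([a,t))=\int_{[a,t)}\|v\|_Y\,d\mu_g$ and differentiating with Theorem~\ref{t2.9} for the Banach space $\mathbb R$.

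\textbf{What each buys.} Your route never invokes Theorem~\ref{t5.4}. The vector theorem then rests only on the differentiation theorem and elementary measure theory, and the scalar fundamental theorem becomes a special case ($\mathbb R$ has the RNP) rather than an input. The paper's route gets an explicit density for $\mu_V$ from the cited scalar theory, so it needs no covering argument.

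\textbf{Two points of precision in your null-set step.}
\begin{itemize}
\item Lemma~\ref{lem:interval-algebra-density} does not produce a cover of $E$. It produces a finite disjoint union $A$ of half-open intervals with $(\mu_g+\mu_V)(E\mathbin{\triangle}A)<\eta$. This still suffices: $\mu_g(A)<\eta$ and $\mu_V(E)\le\mu_V(A)+\eta$, and applying the $g$-absolute continuity of $V$ to the finitely many intervals of $A$ finishes the argument.
\item The countable half-open cover you describe comes from the outer-measure construction of $\mu_g$. If you obtain it instead from outer regularity by open sets, you must split each open interval $(c,d)$ into countably many half-open pieces $[e_{k+1},e_k)$ with $e_k\downarrow c$. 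Otherwise the atom at $c$ is charged, since $\mu_g([c,d))$ can exceed $\mu_g((c,d))$.
\end{itemize}
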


\begin{rem}[Why scalarization alone is insufficient]
	\label{rem:scalarization-insufficient}
	A natural first attempt is to fix $y'\in Y'$ and apply the scalar
	differentiation theorem to the function
	\begin{displaymath}
		t\longmapsto \langle y',u(t)\rangle.
	\end{displaymath}
	This procedure may identify the limits of the scalar difference
	quotients
	\begin{displaymath}
		\left\langle
		y',
		\frac{u(s)-u(t)}{g(s)-g(t)}
		\right\rangle.
	\end{displaymath}
	It does not, however, prove the existence of the strong
	$g$-derivative of $u$.
	
	There are several reasons for this. First, convergence of all scalar
	evaluations gives, at most, weak convergence of the vector difference
	quotients. The derivative in
	Definition~\ref{def:strong-g-derivative}, by contrast, is defined through
	convergence in the norm of $Y$. These two notions are not equivalent in
	an infinite-dimensional space. For example, the canonical basis
	$(e_n)$ of $\ell^2$ satisfies
	\begin{displaymath}
		e_n\rightharpoonup 0
	\end{displaymath}
	but
	\begin{displaymath}
		\|e_n\|_{\ell^2}=1
	\end{displaymath}
	for every $n$.
	
	Second, the exceptional null set supplied by the scalar differentiation
	theorem may depend on the functional $y'$. If the dual space is not
	separable, one cannot intersect the corresponding sets of full measure
	over all $y'\in Y'$ and still conclude that the resulting set has full
	measure. Thus the scalar identities need not hold simultaneously for
	every functional on one common set of full $\mu_g$-measure.
	
	Third, even if the scalar limits define, for each fixed $t$, a candidate
	weak limit, one must still prove that this candidate belongs to $Y$
	rather than merely to $Y''$, that it depends strongly measurably on
	$t$, and that the original difference quotients converge to it in norm.
	Extracting weakly convergent subsequences pointwise does not solve this
	problem, since the chosen subsequence may depend on $t$ and need not
	produce a single strongly measurable function.
	
	For these reasons, we first associate with $u$ the vector measure
	$\nu_u$. Under the absolute continuity condition
	\begin{displaymath}
		\nu_u\ll\mu_g,
	\end{displaymath}
	the Radon--Nikod\'ym property of $Y$ yields a Bochner integrable density
	$v$ satisfying
	\begin{displaymath}
		\nu_u(E)=\int_E v(s)\,d\mu_g(s)
	\end{displaymath}
	for every Borel set $E\subset[a,b)$. The vector-valued differentiation
	theorem then gives
	\begin{displaymath}
		u'_g(t)=v(t)
	\end{displaymath}
	in the norm of $Y$ for $\mu_g$-almost every $t$. This argument provides
	simultaneously a single vector-valued derivative, its strong
	measurability, and the norm convergence required in
	Definition~\ref{def:strong-g-derivative}.
\end{rem}

\begin{proof}
Assume first that $u\in AC_g([a,b];Y)$.  Lemma~\ref{lem:ACg-Cg} and
Proposition~\ref{prop:ACg-BPV} show that $u$ is $g$-continuous and of bounded
variation.  It is also left-continuous.  To see this, let $t_n\uparrow t$.
The left-continuity of $g$ gives $g(t_n)\to g(t)$, and $g$-continuity then
gives $u(t_n)\to u(t)$ in $Y$.

Set $V=V_u$, that is,
\begin{displaymath}
 V(t)=\operatorname{Var}_{[a,t]}(u),\qquad t\in[a,b].
\end{displaymath}
We claim that $V\in AC_g([a,b])$.  Let $\varepsilon>0$ and choose
$\delta>0$ from the $g$-absolute continuity of $u$, with
$\varepsilon/2$ in place of $\varepsilon$.  Consider pairwise disjoint
intervals $(c_i,d_i)$, $i=1,\ldots,n$, such that
\begin{displaymath}
 \sum_{i=1}^n\bigl(g(d_i)-g(c_i)\bigr)<\delta.
\end{displaymath}
For each $i$, choose an arbitrary partition
\begin{displaymath}
 c_i=t_{i,0}<t_{i,1}<\cdots<t_{i,m_i}=d_i.
\end{displaymath}
All the open intervals $(t_{i,j-1},t_{i,j})$ are pairwise disjoint and their
total $g$-increment is smaller than $\delta$.  Therefore
\begin{displaymath}
 \sum_{i=1}^n\sum_{j=1}^{m_i}
 \|u(t_{i,j})-u(t_{i,j-1})\|_Y<\frac{\varepsilon}{2}.
\end{displaymath}
Now fix $\eta>0$.  For every $i$ we may choose the partition so that its
variation sum is larger than
$\operatorname{Var}_{[c_i,d_i]}(u)-\eta/n$.  It follows that
\begin{displaymath}
 \sum_{i=1}^n\operatorname{Var}_{[c_i,d_i]}(u)
 \leq\frac{\varepsilon}{2}+\eta.
\end{displaymath}
Letting $\eta\downarrow0$ and using additivity of the variation on adjacent
intervals, we obtain
\begin{displaymath}
 \sum_{i=1}^n\bigl(V(d_i)-V(c_i)\bigr)
 =\sum_{i=1}^n\operatorname{Var}_{[c_i,d_i]}(u)
 \leq\frac{\varepsilon}{2}<\varepsilon.
\end{displaymath}
This proves the claim.

The scalar fundamental theorem, Theorem~\ref{t5.4}, now gives
\begin{equation}\label{eq:V-density}
 V(t)=\int_{[a,t)}V'_g(s)\,d\mu_g(s),
 \qquad t\in[a,b],
\end{equation}
because $V(a)=0$.  Hence, for every $a\leq c<d\leq b$,
\begin{displaymath}
	\begin{aligned}
		\mu_V([c,d))
		&=V(d)-V(c)\\
		&=\int_{[a,d)}V'_g(s)\,d\mu_g(s)
		-\int_{[a,c)}V'_g(s)\,d\mu_g(s)\\
		&=\int_{[c,d)}V'_g(s)\,d\mu_g(s).
	\end{aligned}
\end{displaymath}
The last equality follows from the disjoint decomposition
\begin{displaymath}
	[a,d)=[a,c)\cup[c,d)
\end{displaymath}
and the additivity of the Lebesgue integral.

We next verify that $V'_g$ is nonnegative at every point at which the
$g$-derivative exists. If $t\notin D_g$ and $s\to t$ through points with
$g(s)\neq g(t)$, then
\begin{displaymath}
	\frac{V(s)-V(t)}{g(s)-g(t)}\geq0.
\end{displaymath}
Indeed, when $s>t$, both the numerator and the denominator are
nonnegative, whereas when $s<t$, both are nonpositive. If $t\in D_g$,
then
\begin{displaymath}
	V'_g(t)
	=
	\frac{V(t^+)-V(t)}{g(t^+)-g(t)}
	\geq0,
\end{displaymath}
because both $V$ and $g$ are nondecreasing. Therefore, after assigning
the value zero on the exceptional $\mu_g$-null set where the derivative
is not defined, we may regard $V'_g$ as a nonnegative
$\mu_g$-measurable function.

Define, for every Borel set $E\subset[a,b)$,
\begin{displaymath}
	\lambda(E)
	:=
	\int_E V'_g(s)\,d\mu_g(s).
\end{displaymath}
Since $V'_g\in L_g^1([a,b))$ and $V'_g\geq0$
$\mu_g$-almost everywhere, $\lambda$ is a finite positive Borel
measure. Moreover, taking $t=b$ in \eqref{eq:V-density} and using
$V(a)=0$, we obtain
\begin{displaymath}
	\lambda([a,b))
	=
	\int_{[a,b)}V'_g(s)\,d\mu_g(s)
	=
	V(b)
	<
	\infty.
\end{displaymath}

The preceding interval identity shows that
\begin{displaymath}
	\lambda([c,d))=\mu_V([c,d))
\end{displaymath}
for every $a\leq c<d\leq b$. By finite additivity, the same equality
holds on the algebra $\mathcal A$ of finite disjoint unions of
half-open intervals. Since
\begin{displaymath}
	\sigma(\mathcal A)=\mathcal B([a,b)),
\end{displaymath}
and both $\lambda$ and $\mu_V$ are finite positive Borel measures,
the uniqueness theorem for finite measures on a generating algebra
(see \cite[Ch.~1]{Kallenberg2002}) yields
\begin{equation}\label{eq:variation-measure-absolute-continuity}
	\mu_V(E)
	=
	\lambda(E)
	=
	\int_E V'_g(s)\,d\mu_g(s)
\end{equation}
for every Borel set $E\subset[a,b)$.

Finally, let $E\subset[a,b)$ be Borel and suppose that
$\mu_g(E)=0$. Then the integral of the $\mu_g$-integrable function
$V'_g$ over $E$ vanishes, and \eqref{eq:variation-measure-absolute-continuity}
gives
\begin{displaymath}
	\mu_V(E)
	=
	\int_E V'_g(s)\,d\mu_g(s)
	=
	0.
\end{displaymath}
Hence
\begin{displaymath}
	\mu_V\ll\mu_g.
\end{displaymath}

Let $\nu_u$ be the vector measure supplied by
Proposition~\ref{prop:BV-vector-measure}. Since
$|\nu_u|=\mu_V$, equation
\eqref{eq:variation-measure-absolute-continuity} implies
$\nu_u\ll\mu_g$ on the Borel $\sigma$-algebra of $[a,b)$.

Apply the Radon--Nikod\'ym property of $Y$ to the finite Borel measure
space
\begin{displaymath}
 ([a,b),\mathcal B([a,b)),\mu_g).
\end{displaymath}
There exists a Bochner-integrable function $v:[a,b)\to Y$ such that
\begin{equation}\label{eq:vector-radon-nikodym-u}
 \nu_u(E)=\int_E v(s)\,d\mu_g(s)
\end{equation}
for every Borel set $E\subset[a,b)$. We may regard $v$ as an element of
$L_g^1([a,b);Y)$ and extend the identity to the $\mu_g$-completion as
follows.  If a completed-measurable set is written as $E\cup N$, where $E$
is Borel and $N$ is contained in a Borel $\mu_g$-null set, define
\begin{displaymath}
 \overline\nu_u(E\cup N):=\nu_u(E).
\end{displaymath}
This definition is independent of the chosen representation: the symmetric
difference of two possible Borel parts is $\mu_g$-null, and hence is also
$\nu_u$-null because $\nu_u\ll\mu_g$.  The completed Bochner integral is
extended in the same way, and its value is unchanged on adding a subset of a
$\mu_g$-null set.  Therefore
\begin{displaymath}
 \overline\nu_u(A)=\int_A v(s)\,d\mu_g(s)
\end{displaymath}
for every $g$-measurable set $A\subset[a,b)$.  We henceforth use the same
symbol $\nu_u$ for this unique completion.

Taking $E=[a,t)$ and using
\eqref{eq:BV-vector-intervals}, we find
\begin{displaymath}
 u(t)=u(a)+\int_{[a,t)}v(s)\,d\mu_g(s),
 \qquad t\in[a,b].
\end{displaymath}
Theorem~\ref{t2.9} applies to this indefinite Bochner integral and gives
\begin{displaymath}
 u'_g(t)=v(t)
 \quad\text{for $g$-almost every }t\in[a,b).
\end{displaymath}
Lemma~\ref{lem:variation-Bochner-density} and
$|\nu_u|=\mu_V$ further give, for every $g$-measurable
$E\subset[a,b)$,
\begin{displaymath}
 \int_E\|v(s)\|_Y\,d\mu_g(s)
 =|\nu_u|(E)
 =\mu_V(E)
 =\int_E V'_g(s)\,d\mu_g(s).
\end{displaymath}
Thus the two nonnegative functions $\|v(\cdot)\|_Y$ and $V'_g$ are
integrable and define the same finite scalar measure on the completed
$\sigma$-algebra:
\begin{displaymath}
 E\longmapsto
 \int_E\|v(s)\|_Y\,d\mu_g(s)
 =
 \int_E V'_g(s)\,d\mu_g(s).
\end{displaymath}
The uniqueness clause in the scalar Radon--Nikod\'ym theorem therefore gives
\begin{displaymath}
 \|v(t)\|_Y=V'_g(t)
 \quad\text{for $g$-almost every }t\in[a,b).
\end{displaymath}
Since $v=u'_g$ almost everywhere, this proves
\eqref{eq:metric-g-derivative}.
Thus the three conditions in the statement are satisfied.

Conversely, assume that those conditions hold and put $v=u'_g$.  We first
recall why the integral of $\|v\|_Y$ is absolutely continuous with respect to
$\mu_g$.  Given $\varepsilon>0$, choose $R>0$ such that
\begin{displaymath}
 \int_{\{\|v\|_Y>R\}}\|v(s)\|_Y\,d\mu_g(s)<\frac{\varepsilon}{2}.
\end{displaymath}
If $R>0$, take $\delta=\varepsilon/(2R)$; if $v=0$ almost everywhere, any
positive $\delta$ will do.  For every measurable $E$ with
$\mu_g(E)<\delta$,
\begin{displaymath}
 \int_E\|v(s)\|_Y\,d\mu_g(s)
 \leq R\mu_g(E)
 +\int_{\{\|v\|_Y>R\}}\|v(s)\|_Y\,d\mu_g(s)
 <\varepsilon.
\end{displaymath}

Let $(a_i,b_i)$, $i=1,\ldots,n$, be pairwise disjoint intervals with
\begin{displaymath}
 \sum_{i=1}^n\bigl(g(b_i)-g(a_i)\bigr)<\delta,
\end{displaymath}
and set $E=\bigcup_{i=1}^n[a_i,b_i)$.  The half-open intervals are disjoint,
so
\begin{displaymath}
 \mu_g(E)=\sum_{i=1}^n\bigl(g(b_i)-g(a_i)\bigr)<\delta.
\end{displaymath}
Using the integral representation in the statement and the norm estimate for
the Bochner integral, we obtain
\begin{displaymath}
 \begin{aligned}
 \sum_{i=1}^n\|u(b_i)-u(a_i)\|_Y
 &\leq\sum_{i=1}^n
 \int_{[a_i,b_i)}\|v(s)\|_Y\,d\mu_g(s)\\
 &=\int_E\|v(s)\|_Y\,d\mu_g(s)<\varepsilon.
 \end{aligned}
\end{displaymath}
This is Definition~\ref{defgabs}; hence $u\in AC_g([a,b];Y)$.
The first half of the proof, now applied to this curve, also gives
$V_u\in AC_g([a,b])$ and \eqref{eq:metric-g-derivative}.
\end{proof}

\begin{rem}[Where the range-space hypothesis enters]
	The two implications in Theorem~\ref{ftc} have a different nature. The
	implication from an integral representation to $g$-absolute continuity is
	valid in every Banach space, whereas the converse implication uses the
	Radon--Nikod\'ym property of the range space.
	
	Assume first that there exists $v\in L_g^1([a,b);Y)$ such that
	\begin{displaymath}
		u(t)=u(a)+\int_{[a,t)}v(s)\,d\mu_g(s),
		\qquad t\in[a,b].
	\end{displaymath}
	For $a\leq c<d\leq b$, additivity of the Bochner integral gives
	\begin{displaymath}
		u(d)-u(c)=\int_{[c,d)}v(s)\,d\mu_g(s).
	\end{displaymath}
	Hence
	\begin{displaymath}
		\|u(d)-u(c)\|_Y
		\leq
		\int_{[c,d)}\|v(s)\|_Y\,d\mu_g(s).
	\end{displaymath}
	Since $\|v(\cdot)\|_Y\in L_g^1([a,b))$, the scalar integral is
	absolutely continuous with respect to $\mu_g$. Thus, for every
	$\varepsilon>0$, there exists $\delta>0$ such that
	\begin{displaymath}
		\mu_g(E)<\delta
	\end{displaymath}
	implies
	\begin{displaymath}
		\int_E\|v(s)\|_Y\,d\mu_g(s)<\varepsilon
	\end{displaymath}
	for every $g$-measurable set $E\subset[a,b)$.
	
	Let $[c_j,d_j)$, $j=1,\ldots,m$, be pairwise disjoint half-open
	intervals satisfying
	\begin{displaymath}
		\sum_{j=1}^{m}\mu_g([c_j,d_j))<\delta.
	\end{displaymath}
	Set
	\begin{displaymath}
		E:=\bigcup_{j=1}^{m}[c_j,d_j).
	\end{displaymath}
	Then
	\begin{displaymath}
		\mu_g(E)=\sum_{j=1}^{m}\mu_g([c_j,d_j))<\delta,
	\end{displaymath}
	and therefore
	\begin{displaymath}
		\begin{aligned}
			\sum_{j=1}^{m}\|u(d_j)-u(c_j)\|_Y
			&\leq
			\sum_{j=1}^{m}\int_{[c_j,d_j)}\|v(s)\|_Y\,d\mu_g(s)\\
			&=
			\int_E\|v(s)\|_Y\,d\mu_g(s)
			<\varepsilon.
		\end{aligned}
	\end{displaymath}
	This is exactly the $g$-absolute continuity condition. The argument uses
	only the norm estimate for the Bochner integral and the absolute continuity
	of a scalar integral. No Radon--Nikod\'ym property of $Y$ is needed in this
	direction.
	
	For the converse implication, assume that $u$ is $g$-absolutely continuous.
	Proposition~\ref{prop:BV-vector-measure} associates with $u$ a countably
	additive $Y$-valued measure $\nu_u$ satisfying
	\begin{displaymath}
		\nu_u([c,d))=u(d)-u(c).
	\end{displaymath}
	The proof of Theorem~\ref{ftc} shows that
	\begin{displaymath}
		|\nu_u|=\mu_{V_u}
	\end{displaymath}
	and
	\begin{displaymath}
		\mu_{V_u}\ll\mu_g.
	\end{displaymath}
	Consequently,
	\begin{displaymath}
		|\nu_u|\ll\mu_g.
	\end{displaymath}
	If $E$ is $g$-measurable and $\mu_g(E)=0$, then
	\begin{displaymath}
		\|\nu_u(E)\|_Y\leq |\nu_u|(E)=0.
	\end{displaymath}
	Thus
	\begin{displaymath}
		\nu_u\ll\mu_g.
	\end{displaymath}
	
	This absolute continuity relation does not, in an arbitrary Banach space,
	ensure that $\nu_u$ has a Bochner-integrable density with values in $Y$.
	The Radon--Nikod\'ym property is used precisely here. By the vector
	Radon--Nikod\'ym theorem, there exists $v\in L_g^1([a,b);Y)$ such that
	\begin{displaymath}
		\nu_u(E)=\int_E v(s)\,d\mu_g(s)
	\end{displaymath}
	for every $g$-measurable set $E\subset[a,b)$. Taking $E=[a,t)$ gives
	\begin{displaymath}
		\begin{aligned}
			u(t)-u(a)
			&=\nu_u([a,t))\\
			&=\int_{[a,t)}v(s)\,d\mu_g(s),
		\end{aligned}
	\end{displaymath}
	and hence
	\begin{displaymath}
		u(t)=u(a)+\int_{[a,t)}v(s)\,d\mu_g(s).
	\end{displaymath}
	
	Therefore, the implication
	\begin{displaymath}
		\text{$u$ has a Bochner integral representation}
		\quad\Longrightarrow\quad
		u\in AC_g([a,b];Y)
	\end{displaymath}
	holds in every Banach space. By contrast, the implication
	\begin{displaymath}
		u\in AC_g([a,b];Y)
		\quad\Longrightarrow\quad
		\text{$u$ has a Bochner integral representation}
	\end{displaymath}
	requires the Radon--Nikod\'ym property in order to represent the vector
	measure $\nu_u$ by a $Y$-valued Bochner density. Thus the theorem separates
	the measure-theoretic statement
	\begin{displaymath}
		|\nu_u|\ll\mu_g
	\end{displaymath}
	from the geometric property of the range space that guarantees the existence
	of a density $d\nu_u/d\mu_g$ with values in $Y$.
\end{rem}

\begin{cor}[Reflexive range spaces]\label{cor:ftc-reflexive}
If $Y$ is reflexive, then the equivalence and the variation identity in
Theorem~\ref{ftc} hold for every $u:[a,b]\to Y$.
\end{cor}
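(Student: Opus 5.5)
The corollary follows by combining Theorem~\ref{ftc} with the fact that every reflexive Banach space has the Radon--Nikod\'ym property \cite[Ch.~III, Sect.~2]{DiestelUhl1977}. The plan has three steps.

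First, fix a reflexive Banach space $Y$ and record that $Y$ has the Radon--Nikod\'ym property in exactly the form used in Theorem~\ref{ftc}. That is, for every finite measure space, every countably additive $Y$-valued measure of bounded variation that is absolutely continuous with respect to the scalar measure admits a Bochner density. The only instance needed is the finite measure space $([a,b),\mathcal B([a,b)),\mu_g)$, whose total mass is $M_g<\infty$ by \eqref{eq:total-clock-mass}. So this finiteness is the one point to note explicitly.

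Second, apply Theorem~\ref{ftc} verbatim to an arbitrary $u:[a,b]\to Y$. This yields:
\begin{itemize}
\item the equivalence between $u\in AC_g([a,b];Y)$ and conditions (1)--(3);
\item in that case, $V_u\in AC_g([a,b])$ together with \eqref{eq:metric-g-derivative}.
\end{itemize}
No additional hypothesis on $u$ or on $g$ enters, since Theorem~\ref{ftc} is stated for an arbitrary derivator and an arbitrary map $u$.

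Third, to make the scope transparent, observe that reflexivity is used only at the single step in the proof of Theorem~\ref{ftc} where $\nu_u\ll\mu_g$ is converted into a density $v\in L_g^1([a,b);Y)$. All other steps hold in any Banach space: left-continuity of $u$, Proposition~\ref{prop:BV-vector-measure}, the identity $|\nu_u|=\mu_{V_u}$, and the converse implication.

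There is no genuine obstacle. The only care needed is to cite the reflexive-implies-RNP result in the vector-measure formulation that the paper adopted, rather than through martingale or dentability characterizations. For completeness, one could also mention the alternative route for separable reflexive $Y$, where $Y=Z'$ with $Z$ separable and the Dunford--Pettis theorem applies. This route is not needed, since the cited result covers the general case.
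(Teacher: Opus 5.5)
Your proposal is correct and matches the paper's proof. The paper likewise cites that every reflexive space has the Radon--Nikod\'ym property \cite[Ch.~III, Sect.~2]{DiestelUhl1977} and then applies Theorem~\ref{ftc} directly. Your extra remarks, on where the RNP enters and on the separable-dual route, are accurate but not needed.
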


\begin{proof}
Every reflexive Banach space has the Radon--Nikod\'ym property; see
\cite[Ch.~III, Sect.~2]{DiestelUhl1977}.  The conclusion is therefore an
immediate application of Theorem~\ref{ftc}.
\end{proof}

\begin{rem}\label{rem:RNP-necessary-ftc}
	The assumption that $Y$ has the Radon--Nikod\'ym property cannot, in
	general, be omitted. The obstruction already appears for the classical
	clock
	\begin{displaymath}
		g(t)=t.
	\end{displaymath}
	In this case, $\mu_g$ is Lebesgue measure on $[a,b)$ and
	$g$-absolute continuity coincides with the usual absolute continuity of
	$Y$-valued curves.
	
	Recall that a function $u:[a,b]\to Y$ is absolutely continuous if, for every
	$\varepsilon>0$, there exists $\delta>0$ such that, for every finite family
	of pairwise disjoint intervals
	\begin{displaymath}
		(c_1,d_1),\ldots,(c_m,d_m)\subset[a,b]
	\end{displaymath}
	satisfying
	\begin{displaymath}
		\sum_{j=1}^{m}(d_j-c_j)<\delta,
	\end{displaymath}
	one has
	\begin{displaymath}
		\sum_{j=1}^{m}\|u(d_j)-u(c_j)\|_Y<\varepsilon.
	\end{displaymath}
	Every Lipschitz curve is absolutely continuous. However, in a general
	infinite-dimensional Banach space, an absolutely continuous curve, and even
	a Lipschitz curve, need not be differentiable in norm almost everywhere.
	Here differentiability at $t$ means that there exists $u'(t)\in Y$ such that
	\begin{displaymath}
		\lim_{h\to0}
		\left\|
		\frac{u(t+h)-u(t)}{h}-u'(t)
		\right\|_Y
		=0,
	\end{displaymath}
	whenever $t+h\in[a,b]$.
	
	The connection with the Radon--Nikod\'ym property is best expressed in terms
	of vector measures. An absolutely continuous curve $u$ defines a
	$Y$-valued measure $\nu_u$ by
	\begin{displaymath}
		\nu_u([c,d))=u(d)-u(c).
	\end{displaymath}
	The absolute continuity of $u$ implies
	\begin{displaymath}
		\nu_u\ll\mathcal{L}^{1},
	\end{displaymath}
	where $\mathcal{L}^{1}$ denotes Lebesgue measure. If $Y$ has the
	Radon--Nikod\'ym property, the vector-valued Radon--Nikod\'ym theorem provides
	$v\in L^1(a,b;Y)$ such that
	\begin{displaymath}
		\nu_u(E)=\int_E v(s)\,\mathrm{d}s
	\end{displaymath}
	for every Borel set $E\subset[a,b)$. Applying this identity to $[a,t)$ gives
	\begin{displaymath}
		\nu_u([a,t))=u(t)-u(a)=\int_a^t v(s)\,ds,
	\end{displaymath}
	and hence
	\begin{displaymath}
		u(t)=u(a)+\int_a^t v(s)\,\mathrm{d}s.
	\end{displaymath}
	The vector-valued differentiation theorem then yields
	\begin{displaymath}
		u'(t)=v(t)
	\end{displaymath}
	in the norm of $Y$ for almost every $t\in(a,b)$.
	
	Conversely, one of the standard characterizations of the
	Radon--Nikod\'ym property states that $Y$ has this property if and only if
	every Lipschitz mapping from a real interval into $Y$ is differentiable in
	norm almost everywhere. Therefore, if $Y$ fails the Radon--Nikod\'ym
	property, there exists a Lipschitz, and hence absolutely continuous, curve
	$u:[a,b]\to Y$ that is not differentiable in norm on a set of positive
	Lebesgue measure. Such a curve cannot admit a representation
	\begin{displaymath}
		u(t)=u(a)+\int_a^t v(s)\,\mathrm{d}s
	\end{displaymath}
	with $v\in L^1(a,b;Y)$, because such a representation would imply norm
	differentiability almost everywhere.
	
	Thus the range-space assumption in the vector-valued fundamental theorem is
	not merely technical. Already for $g(t)=t$, the validity of the theorem for
	all absolutely continuous $Y$-valued curves is equivalent to the
	Radon--Nikod\'ym property of $Y$; see
	\cite[Ch.~III, Sects.~1--2 and Ch.~VII, Sect.~1]{DiestelUhl1977}.
	Accordingly, the RNP hypothesis is sharp uniformly over the full class of
	derivators treated in this paper, since that class contains the classical
	clock.  This uniform sharpness should not be read as saying that the full RNP
	is necessary for every fixed special clock; for instance, particular purely
	atomic clocks may admit simpler finite- or sequence-valued representations.
\end{rem}

\section{Aubin--Lions compactness for Stieltjes evolution graphs}
\label{sec:aubin-lions}

Let $B_0$ and $B_1$ be Banach spaces and let
$i_{01}:B_0\hookrightarrow B_1$ be a fixed continuous embedding.  Whenever
an intermediate space $B$ is present, we write $i_{0B}:B_0\to B$ and
$i_{B1}:B\to B_1$ for the corresponding embeddings, so that
$i_{01}=i_{B1}\circ i_{0B}$.  We define the evolution space as the graph of the Stieltjes differentiation
relation. The definition is related to the space introduced in
\cite[formula~(26)]{Fran2020}, but retains the density as a component. This is
necessary when the restricted Stieltjes measure has a terminal atom.

\begin{dfn}[Stieltjes evolution graph]\label{def:Stieltjes-evolution-graph}
For $1\leq p_0,p_1\leq\infty$, the \emph{Stieltjes evolution graph} is
\begin{displaymath}
 \begin{aligned}
 \mathbb W_g^{p_0,p_1}([a,b];B_0,B_1)
 :=\bigl\{(u,z):\;&
 u\in L_g^{p_0}([a,b);B_0),\\
 &z\in L_g^{p_1}([a,b);B_1),\\
 &\text{there exists }x\in B_1\text{ such that}\\
 &i_{01}u(t)=x+\int_{[a,t)}z(r)\,d\mu_g(r)\\
 &\text{for }\mu_g\text{-almost every }t\in[a,b)
 \bigr\}.
 \end{aligned}
\end{displaymath}
It is endowed with the norm
\begin{equation}\label{eq:graph-norm}
 \|(u,z)\|_{\mathbb W_g^{p_0,p_1}}
 =\|u\|_{L_g^{p_0}(B_0)}+\|z\|_{L_g^{p_1}(B_1)}.
\end{equation}
We write $\pi_0(u,z)=u$ and $D_g(u,z)=z$. The term ``graph'' refers to
the graph of the differentiation relation between the state and derivative
components.
\end{dfn}

We call $s\in D_g\cap[a,b)$ a \emph{terminal atom} of the restricted
Stieltjes measure
if
\begin{equation}\label{eq:terminal-atom}
 \mu_g((s,b))=0.
\end{equation}
Thus a terminal atom is the last point at which the restricted measure
acquires mass; the interval to its right may still be nonempty in ordinary
time.

Assume that $M:=\mu_g([a,b))>0$. For a fixed pair $(u,z)$ in the graph,
the element $x$ in Definition~\ref{def:Stieltjes-evolution-graph} is unique.
Indeed, the difference of two possible choices is a constant element of
$B_1$ that vanishes $\mu_g$-almost everywhere; since $M>0$, this constant is
zero. We define the \emph{canonical representative} associated with $(u,z)$
by
\begin{equation}\label{eq:canonical-representative-definition}
 \widetilde u(t)
 :=x+\int_{[a,t)}z(r)\,d\mu_g(r),
 \qquad t\in[a,b].
\end{equation}
By Definition~\ref{def:Stieltjes-evolution-graph},
$\widetilde u=i_{01}u$ $\mu_g$-almost everywhere. Since $\mu_g([a,b))<\infty$,
$L_g^{p_1}([a,b);B_1)\subset L_g^1([a,b);B_1)$ for every
$1\leq p_1\leq\infty$. Theorem~\ref{t2.9} therefore gives
\begin{displaymath}
 \widetilde u\in AC_g([a,b];B_1),
 \qquad
 \widetilde u'_g=z
 \quad g\text{-almost everywhere}.
\end{displaymath}
If $M=0$, each Bochner factor in the definition consists of a single
equivalence class. Hence the graph is the singleton containing the zero pair.
The element $x$ in the almost-everywhere representation is then not unique,
but this ambiguity has no effect on the graph element or on its norm; no
canonical pointwise lift is needed in this degenerate case.

\begin{exa}[A terminal atom and nonuniqueness of the derivative]
\label{ex:terminal-atom-nonuniqueness}
Let $Y=\mathbb R$, let $[a,b]=[0,2]$, and fix $\alpha>0$.  Define the
left-continuous derivator $g:[0,2]\to\mathbb R$ by
\begin{displaymath}
g(t)=
\begin{cases}
0, & 0\leq t\leq1,\\
\alpha, & 1<t\leq2.
\end{cases}
\end{displaymath}
Then
\begin{displaymath}
\mu_g=\alpha\delta_{\{1\}},
\qquad
\mu_g((1,2))=0,
\end{displaymath}
so $1$ is a terminal atom. For every $c\in\mathbb R$, choose the
representative
\begin{displaymath}
 z_c=c\,\mathbf1_{\{1\}}
 \in L_g^p([0,2);\mathbb R)
\end{displaymath}
and let the initial value be $x=0$. The associated canonical representative
is
\begin{displaymath}
\widetilde u_c(t)
=
\int_{[0,t)}z_c(r)\,d\mu_g(r)
=
\begin{cases}
0, & 0\leq t\leq1,\\
\alpha c, & 1<t\leq2.
\end{cases}
\end{displaymath}
In particular, $\widetilde u_c(1)=0$ for every $c$.  Hence all these
representatives determine the same state class, namely the zero element of
$L_g^p([0,2);\mathbb R)$.  For $1\leq p<\infty$ and $c,d\in\mathbb R$,
\begin{displaymath}
\|\widetilde u_c-\widetilde u_d\|_{L_g^p([0,2);\mathbb R)}^p
=
\alpha|\widetilde u_c(1)-\widetilde u_d(1)|^p
=
0,
\end{displaymath}
and the same conclusion holds for $p=\infty$.

The derivatives are nevertheless different.  Indeed,
\begin{displaymath}
\Delta^+g(1)=\alpha,
\qquad
\Delta^+\widetilde u_c(1)=\alpha c,
\qquad
(\widetilde u_c)'_g(1)=c.
\end{displaymath}
Thus, if $c\ne d$ and $1\leq p<\infty$,
\begin{displaymath}
\|(\widetilde u_c)'_g-(\widetilde u_d)'_g
\|_{L_g^p([0,2);\mathbb R)}^p
=
\alpha|c-d|^p
>
0;
\end{displaymath}
for $p=\infty$, the corresponding norm is $|c-d|$.  We therefore have the
same state class but distinct derivative classes.

The reason is that the Bochner state class records the left-continuous value
at the atom, whereas the derivative at that atom determines the value
immediately after the jump.  Since the interval to the right of the atom has
zero $\mu_g$-measure, this post-jump value is invisible to the state class.
Consequently, the derivative cannot in general be recovered from the state
class alone, which motivates retaining the pair $(u,z)$ in
Definition~\ref{def:Stieltjes-evolution-graph}. One could restore uniqueness
by requiring the representative to remain equal to its value at the terminal
atom on the interval to its right. In the present example, however, this would
impose $\alpha c=0$ and hence $c=0$; it would exclude every nontrivial
terminal impulse rather than represent it. When the derivative is known to be
uniquely determined by the state---as in ordinary time and in the atomic
measure of Corollary~\ref{cor:weighted-discrete}---the graph can instead be
identified with a space of state functions.
\end{exa}

The following estimate controls both the size and the temporal variation of
the canonical representative.

\begin{pro}[Canonical representative and temporal control]
\label{prop:canonical-temporal-control}
	Assume that $M:=\mu_g([a,b))>0$ and denote by $c_{01}$ the norm of the
	embedding $B_0\hookrightarrow B_1$.  If
	$\boldsymbol u=(u,z)\in
	\mathbb W_g^{p_0,p_1}([a,b];B_0,B_1)$, where
	$1\leq p_0,p_1\leq\infty$, then its canonical representative
	$\widetilde u:[a,b]\to B_1$ satisfies
	\begin{equation}\label{eq:canonical-representative-bound}
		\sup_{t\in[a,b]}\|\widetilde u(t)\|_{B_1}
		\leq
		c_{01}M^{-1/p_0}\|u\|_{L^{p_0}_g(B_0)}
		+M^{1-1/p_1}\|z\|_{L^{p_1}_g(B_1)},
	\end{equation}
	with the usual conventions when an exponent is infinite. Moreover, for
	$a\leq s<t\leq b$,
	\begin{equation}\label{eq:canonical-temporal-modulus}
		\|\widetilde u(t)-\widetilde u(s)\|_{B_1}
		\leq
		\int_{[s,t)}\|z(r)\|_{B_1}\,d\mu_g(r).
	\end{equation}
	If $p_1>1$, Hölder's inequality further bounds the right-hand side by
	\begin{displaymath}
		\|z\|_{L^{p_1}_g(B_1)}
		\mu_g([s,t))^{1-1/p_1}.
	\end{displaymath}
\end{pro}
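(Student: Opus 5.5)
The temporal modulus \eqref{eq:canonical-temporal-modulus} comes first, since the sup bound will use it. For $a\leq s<t\leq b$ the definition \eqref{eq:canonical-representative-definition} and additivity of the Bochner integral over the disjoint decomposition $[a,t)=[a,s)\cup[s,t)$ give
\begin{displaymath}
 \widetilde u(t)-\widetilde u(s)=\int_{[s,t)}z(r)\,d\mu_g(r).
\end{displaymath}
The norm estimate for Bochner integrals then yields \eqref{eq:canonical-temporal-modulus}. For $p_1>1$ we apply Hölder's inequality to $\mathbf 1_{[s,t)}\|z\|_{B_1}$ with exponents $p_1$ and $p_1'=p_1/(p_1-1)$. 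This gives the factor $\mu_g([s,t))^{1/p_1'}=\mu_g([s,t))^{1-1/p_1}$, and we bound the restricted $L^{p_1}$ norm of $z$ by the full one. The case $p_1=\infty$ reads directly as $\|z\|_\infty\mu_g([s,t))$.

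For the supremum bound \eqref{eq:canonical-representative-bound} the plan is an averaging argument. Fix $t\in[a,b]$. For every $s\in[a,b)$ we write $\widetilde u(t)=\widetilde u(s)+(\widetilde u(t)-\widetilde u(s))$, where the increment is understood with the appropriate sign if $s>t$. The previous step, applied to the interval between $s$ and $t$, together with the trivial bound by the integral over all of $[a,b)$, gives
\begin{displaymath}
 \|\widetilde u(t)\|_{B_1}\leq\|\widetilde u(s)\|_{B_1}+\int_{[a,b)}\|z\|_{B_1}\,d\mu_g
\end{displaymath}
for all $s\in[a,b)$. Hölder's inequality on the finite measure space bounds the last integral by $M^{1-1/p_1}\|z\|_{L^{p_1}_g(B_1)}$.

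Next we integrate in $s$ over $[a,b)$ with respect to $\mu_g$ and divide by $M>0$. We use $\widetilde u=i_{01}u$ $\mu_g$-almost everywhere, which is recorded after \eqref{eq:canonical-representative-definition}, and $\|i_{01}u(s)\|_{B_1}\leq c_{01}\|u(s)\|_{B_0}$. This gives
\begin{displaymath}
 \|\widetilde u(t)\|_{B_1}\leq\frac{c_{01}}{M}\int_{[a,b)}\|u(s)\|_{B_0}\,d\mu_g(s)+M^{1-1/p_1}\|z\|_{L^{p_1}_g(B_1)}.
\end{displaymath}
Hölder's inequality once more bounds the mean by $M^{-1}M^{1-1/p_0}\|u\|_{L^{p_0}_g(B_0)}=M^{-1/p_0}\|u\|_{L^{p_0}_g(B_0)}$, with the convention $M^{0}=1$ when $p_0=\infty$ (then the bound is by the essential supremum). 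Taking the supremum over $t$ gives \eqref{eq:canonical-representative-bound}.

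Measurability of $s\mapsto\|\widetilde u(s)\|_{B_1}$ on $[a,b)$ is not an issue, since it agrees almost everywhere with $\|i_{01}u\|_{B_1}$. The main point requiring care is rather that the pointwise inequality must hold for every $s$, not just almost every $s$, and must cover $s>t$ as well as $s<t$. Both are handled by the two-sided increment bound with the integral over $[\min,\max)\subset[a,b)$, which is valid at every point, including $t=b$.
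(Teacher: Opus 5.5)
Your proof is correct and has the same structure as the paper's. The modulus comes from additivity of the integral, the Bochner norm estimate and H\"older. The sup bound comes from anchoring $\widetilde u(t)$ at a second point and bounding the increment by $\int_{[a,b)}\|z\|_{B_1}\,d\mu_g$.

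The only difference is how the anchor is chosen. The paper picks a single point $\tau$ lying both in the full-measure set $\{\widetilde u=i_{01}u\}$ and in the positive-measure set where $\|u\|_{B_0}^{p_0}$ does not exceed its mean, and it treats $p_0=\infty$ separately. You instead average the pointwise inequality over all anchors $s$ and apply H\"older to the mean. This handles every $p_0$ at once, at the small cost of the measurability remark you already include.
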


\begin{proof}
	Choose a strongly measurable representative of the Bochner class $u$ and
let
\begin{displaymath}
 E_0:=\{t\in[a,b):\widetilde u(t)=i_{01}u(t)\}.
\end{displaymath}
Then $\mu_g([a,b)\setminus E_0)=0$. We show that there is a point of
$E_0$ at which the required norm bound holds.

Suppose first that $p_0<\infty$ and set
\begin{displaymath}
 c:=M^{-1}\|u\|_{L_g^{p_0}(B_0)}^{p_0}.
\end{displaymath}
The measurable set
\begin{displaymath}
 E_1:=\{t\in[a,b):\|u(t)\|_{B_0}^{p_0}\leq c\}
\end{displaymath}
cannot be $\mu_g$-null. Otherwise
$\|u(t)\|_{B_0}^{p_0}>c$ for almost every $t$, and the nonnegative
function $\|u(\cdot)\|_{B_0}^{p_0}-c$ would be positive almost
everywhere on a set of positive finite measure. Its integral would
therefore be strictly positive, contradicting
\begin{displaymath}
 \int_{[a,b)}\bigl(\|u(t)\|_{B_0}^{p_0}-c\bigr)\,d\mu_g(t)=0.
\end{displaymath}
Since $E_0$ has full measure and $E_1$ has positive measure,
$E_0\cap E_1$ is nonempty. Choose $\tau$ in this intersection.

If $p_0=\infty$, the inequality
\begin{displaymath}
 \|u(t)\|_{B_0}\leq\|u\|_{L_g^\infty(B_0)}
\end{displaymath}
holds almost everywhere. Intersecting this full-measure set with $E_0$
again gives a permissible point $\tau$. In either case,
\begin{displaymath}
 \widetilde u(\tau)=i_{01}u(\tau),
 \qquad
 \|u(\tau)\|_{B_0}
 \leq M^{-1/p_0}\|u\|_{L^{p_0}_g(B_0)},
\end{displaymath}
with the convention $M^{-1/\infty}=1$.

From the integral representation, independently of the order of $t$ and
	$\tau$,
	\begin{displaymath}
		\|\widetilde u(t)-\widetilde u(\tau)\|_{B_1}
		\leq\int_{[a,b)}\|z(r)\|_{B_1}\,d\mu_g(r).
	\end{displaymath}
	Hölder's inequality bounds the right-hand side by
	$M^{1-1/p_1}\|z\|_{L^{p_1}_g(B_1)}$.  Since
	$\|\widetilde u(\tau)\|_{B_1}\leq c_{01}\|u(\tau)\|_{B_0}$, this proves
	\eqref{eq:canonical-representative-bound}.  Applying the same argument
	to
	\begin{displaymath}
		\widetilde u(t)-\widetilde u(s)=\int_{[s,t)}z(r)\,d\mu_g(r)
	\end{displaymath}
	proves \eqref{eq:canonical-temporal-modulus}; its final assertion is
	another application of Hölder's inequality.
\end{proof}

\begin{cor}[Canonical lift and bounded evaluations]
\label{cor:canonical-lift}
Assume that $M=\mu_g([a,b))>0$. The canonical lift
\begin{displaymath}
 \mathcal J_g:\mathbb W_g^{p_0,p_1}([a,b];B_0,B_1)
 \longrightarrow\mathcal{BC}_g([a,b];B_1),
 \qquad \mathcal J_g(u,z)=\widetilde u,
\end{displaymath}
is linear and bounded, and its range is contained in
$AC_g([a,b];B_1)$. In particular, evaluation of the canonical representative
at any fixed $t\in[a,b]$ is a bounded linear map into $B_1$. If
$s\in D_g\cap[a,b)$, evaluation of the state class at $s$ is a bounded linear
map from $L_g^{p_0}([a,b);B_0)$ to $B_0$, with
\begin{equation}\label{eq:atom-evaluation-general}
 \|u(s)\|_{B_0}
 \leq \mu_g(\{s\})^{-1/p_0}
       \|u\|_{L_g^{p_0}(B_0)}.
\end{equation}
\end{cor}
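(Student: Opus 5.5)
The proof has four parts: linearity, boundedness into $\mathcal{BC}_g$, membership in $AC_g$, and the evaluation bounds.

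\emph{Linearity.} By the uniqueness of the element $x$ (valid since $M>0$), the map $(u,z)\mapsto\widetilde u$ is well defined. If $(u_1,z_1)$ and $(u_2,z_2)$ have initial elements $x_1,x_2$, then $x_1+\lambda x_2$ together with $z_1+\lambda z_2$ represents $u_1+\lambda u_2$. Uniqueness then gives $\mathcal J_g(\boldsymbol u_1+\lambda\boldsymbol u_2)=\widetilde u_1+\lambda\widetilde u_2$ pointwise on $[a,b]$.

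\emph{Boundedness and range.} Theorem~\ref{t2.9} applied to $z\in L^{p_1}_g\subset L^1_g([a,b);B_1)$ shows that $\widetilde u-x$ lies in $AC_g([a,b];B_1)$. Constants are trivially $g$-absolutely continuous, so $\widetilde u\in AC_g([a,b];B_1)$. By Proposition~\ref{prop:ACg-BPV}, this class is contained in $\mathcal{BC}_g([a,b];B_1)$. The bound \eqref{eq:canonical-representative-bound} of Proposition~\ref{prop:canonical-temporal-control} gives
\[
\|\widetilde u\|_{\infty,B_1}\le \max\{c_{01}M^{-1/p_0},M^{1-1/p_1}\}\,\|(u,z)\|_{\mathbb W_g^{p_0,p_1}},
\]
so $\mathcal J_g$ is bounded. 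Evaluation at a fixed $t$ is the composition of $\mathcal J_g$ with the contraction $f\mapsto f(t)$ on $\mathcal{BC}_g$, hence bounded and linear.

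\emph{Evaluation at atoms.} Let $s\in D_g\cap[a,b)$. By \eqref{eq:atom-mass-clock}, $\mu_g(\{s\})=\Delta^+g(s)>0$. Two representatives of a Bochner class agree $\mu_g$-a.e., hence at $s$, so $u(s)$ is well defined; linearity is clear. For $p_0<\infty$,
\[
\mu_g(\{s\})\|u(s)\|_{B_0}^{p_0}=\int_{\{s\}}\|u\|_{B_0}^{p_0}\,d\mu_g\le\|u\|^{p_0}_{L^{p_0}_g(B_0)},
\]
which is \eqref{eq:atom-evaluation-general}. For $p_0=\infty$, the singleton has positive measure, so the essential-supremum bound holds at $s$ itself, consistent with the convention $\mu_g(\{s\})^{-1/\infty}=1$.

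The only point needing care is this well-definedness of pointwise values at an atom for an equivalence class. It follows directly because $\{s\}$ is not $\mu_g$-null; the rest is assembling Proposition~\ref{prop:canonical-temporal-control}, Theorem~\ref{t2.9} and Proposition~\ref{prop:ACg-BPV}.
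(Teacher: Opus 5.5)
Your proposal is correct and follows essentially the same route as the paper. Linearity comes from the uniqueness of the initial element $x$, and boundedness from the supremum estimate \eqref{eq:canonical-representative-bound}. The range inclusion comes from Theorem~\ref{t2.9} applied to $z\in L_g^1([a,b);B_1)$, point evaluation is a contraction on $\mathcal{BC}_g$, and the atomic bound follows by integrating $\|u\|_{B_0}^{p_0}$ over the positive-mass singleton $\{s\}$. The only differences are that you explicitly derive $\widetilde u\in\mathcal{BC}_g([a,b];B_1)$ from Proposition~\ref{prop:ACg-BPV} and write out the constant $\max\{c_{01}M^{-1/p_0},M^{1-1/p_1}\}$, both of which the paper leaves implicit.
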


\begin{proof}
Linearity of $\mathcal J_g$ follows from uniqueness of the canonical
representative when $M>0$, and its boundedness is precisely
\eqref{eq:canonical-representative-bound}. The range inclusion follows from
\eqref{eq:canonical-representative-definition}, the finite-measure embedding
$L_g^{p_1}(B_1)\subset L_g^1(B_1)$ and Theorem~\ref{t2.9}. The norm of point
evaluation on $\mathcal{BC}_g([a,b];B_1)$ is at most one, which proves the
second assertion. Finally, the value of a Bochner class at an atom is well
defined,
and, if $p_0<\infty$,
\begin{displaymath}
 \mu_g(\{s\})\|u(s)\|_{B_0}^{p_0}
 \leq\int_{[a,b)}\|u(r)\|_{B_0}^{p_0}\,d\mu_g(r).
\end{displaymath}
Taking the $p_0$-th root gives \eqref{eq:atom-evaluation-general}.  If
$p_0=\infty$, the value at the positive-mass atom cannot exceed the essential
supremum, and the same formula holds with the convention $1/\infty=0$.
\end{proof}

\begin{pro}[Banach and reflexivity properties]\label{prop:W-structure}
	Let $1\leq p_0,p_1\leq\infty$. Then
	$\mathbb W_g^{p_0,p_1}([a,b];B_0,B_1)$ is a Banach space.  If $B_0$ and
	$B_1$ are reflexive and $1<p_0,p_1<\infty$, the graph is reflexive.
\end{pro}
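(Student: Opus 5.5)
The plan is to realize $\mathbb W_g^{p_0,p_1}([a,b];B_0,B_1)$ as a closed subspace of the product
\begin{displaymath}
 \mathcal X:=L_g^{p_0}([a,b);B_0)\times L_g^{p_1}([a,b);B_1),
\end{displaymath}
equipped with the sum norm \eqref{eq:graph-norm}. Both assertions then follow from standard facts about product spaces.

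\emph{Completeness.} The product $\mathcal X$ is a Banach space because each Bochner factor is complete for every $1\leq p\leq\infty$. It therefore suffices to show that the graph is closed in $\mathcal X$.

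If $M=0$, the graph is the singleton $\{(0,0)\}$, which is trivially closed. Assume $M>0$ and let $(u_n,z_n)\to(u,z)$ in $\mathcal X$ with $(u_n,z_n)$ in the graph. Let $x_n$ be the unique initial elements.

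\textbf{Step 1: convergence of the $x_n$.} Pick $\tau$ as in the proof of Proposition~\ref{prop:canonical-temporal-control}, so that it works for the differences $u_n-u_m$. Then \eqref{eq:canonical-representative-bound}, applied to $(u_n-u_m,z_n-z_m)$, gives
\begin{displaymath}
 \sup_{t}\|\widetilde u_n(t)-\widetilde u_m(t)\|_{B_1}
 \leq C\bigl(\|u_n-u_m\|_{L_g^{p_0}(B_0)}+\|z_n-z_m\|_{L_g^{p_1}(B_1)}\bigr).
\end{displaymath}
Evaluating at $t=a$ yields $\|x_n-x_m\|_{B_1}\leq C\|(u_n-u_m,z_n-z_m)\|$. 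Hence $(x_n)$ is Cauchy in $B_1$, and we let $x$ denote its limit.

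\textbf{Step 2: convergence of the integrals.} For every $t$,
\begin{displaymath}
 \Bigl\|\int_{[a,t)}(z_n-z)\,d\mu_g\Bigr\|_{B_1}
 \leq M^{1-1/p_1}\|z_n-z\|_{L_g^{p_1}(B_1)}.
\end{displaymath}
So $x_n+\int_{[a,t)}z_n\,d\mu_g\to x+\int_{[a,t)}z\,d\mu_g$ uniformly in $t$.

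\textbf{Step 3: identification of the limit.} Since $i_{01}u_n\to i_{01}u$ in $L_g^{p_0}(B_1)$ by continuity of the embedding, we may pass to a subsequence converging $\mu_g$-a.e. Passing to the limit in the representation identity $\mu_g$-a.e. gives
\begin{displaymath}
 i_{01}u(t)=x+\int_{[a,t)}z\,d\mu_g\quad\text{for }\mu_g\text{-a.e. }t,
\end{displaymath}
so $(u,z)$ belongs to the graph. Linearity of the graph is immediate by adding representations, with initial elements $x+y$.

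The only step needing care is Step 1: without control of the initial elements, the limit representation could fail. The remedy is the uniform bound from Proposition~\ref{prop:canonical-temporal-control}. An alternative that avoids the bound is to average the identity over $[a,b)$: this expresses $Mx$ as a bounded linear functional of $(u,z)$, namely
\begin{displaymath}
 Mx=\int i_{01}u\,d\mu_g-\int\!\!\int_{[a,t)}z\,d\mu_g\,d\mu_g(t).
\end{displaymath}

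\emph{Reflexivity.} Assume $B_0,B_1$ are reflexive and $1<p_0,p_1<\infty$. Since $\mu_g$ restricted to $[a,b)$ is a finite measure, the Bochner spaces $L_g^{p_i}([a,b);B_i)$ are reflexive. Indeed, reflexive spaces have the RNP, so the dual of $L^{p}(\mu;B)$ is $L^{p'}(\mu;B')$, and $B'$ is again reflexive and has the RNP; see \cite[Ch.~IV]{DiestelUhl1977}.

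A finite product of reflexive spaces is reflexive. The graph is a closed subspace of $\mathcal X$ by the first part, and closed subspaces of reflexive spaces are reflexive. This concludes the argument. The main potential obstacle is only citing the Bochner duality correctly, which is where the RNP of $B_i'$ enters.
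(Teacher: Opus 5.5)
Your proposal is correct and follows essentially the paper's route: closedness of the graph in the product of Bochner spaces via the uniform bound \eqref{eq:canonical-representative-bound} applied to difference pairs, and then reflexivity as a closed subspace of a reflexive product. The differences are cosmetic. The paper shows that the full canonical representatives $\widetilde u_n$ are Cauchy in $\mathcal{BC}_g([a,b];B_1)$ and identifies the limit in $L_g^{p_0}([a,b);B_1)$, whereas you track only the initial values $x_n$ (evaluation at $t=a$) and pass to the limit along an a.e.\ convergent subsequence. Your remark about re-picking $\tau$ is unnecessary, since the proposition applies directly to the difference pair once the linearity of the graph, which you note at the end, is stated first.
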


\begin{proof}
	Put $M=\mu_g([a,b))$.  If $M=0$, both factors in
	Definition~\ref{def:Stieltjes-evolution-graph} are zero spaces and there
	is nothing to prove.  Assume $M>0$.

	Let $\boldsymbol u_n=(u_n,z_n)$ be a Cauchy sequence in the graph.  The
	completeness of Bochner spaces gives functions
	$u\in L^{p_0}_g([a,b);B_0)$ and
	$z\in L^{p_1}_g([a,b);B_1)$ such that
	\begin{displaymath}
		u_n\longrightarrow u\quad\text{in }L^{p_0}_g(B_0),
		\qquad
		z_n\longrightarrow z\quad\text{in }L^{p_1}_g(B_1).
	\end{displaymath}
	Let $U_n$ be the canonical representative associated with
	$(u_n,z_n)$.  The difference pair
	$(u_n-u_m,z_n-z_m)$ also belongs to the graph and has representative
	$U_n-U_m$.  Applying \eqref{eq:canonical-representative-bound} to this
	pair shows that $(U_n)$ is Cauchy in
	$\mathcal{BC}_g([a,b];B_1)$ with the uniform norm.  The uniform limit of
	$g$-continuous functions is $g$-continuous.  Denote this limit by
	$U:[a,b]\to B_1$.

	Since $i_{01}$ is continuous, $i_{01}u_n\to i_{01}u$ in
	$L^{p_0}_g(B_1)$.  On the other hand,
	$U_n=i_{01}u_n$ almost everywhere and uniform convergence gives
	$U_n\to U$ in $L^{p_0}_g(B_1)$.  Uniqueness of the limit in this Bochner
	space yields
	\begin{equation}\label{eq:identify-W-limit}
		U=i_{01}u\qquad\mu_g\text{-almost everywhere}.
	\end{equation}

	Because $M<\infty$, Hölder's inequality implies $z_n\to z$ in
	$L^1_g(B_1)$.  Hence
	\begin{displaymath}
	 \sup_{t\in[a,b]}
	 \left\|\int_{[a,t)}(z_n(r)-z(r))\,d\mu_g(r)\right\|_{B_1}
	 \leq\|z_n-z\|_{L^1_g(B_1)}\longrightarrow0.
	\end{displaymath}
	Passing to the limit in
	\begin{displaymath}
		U_n(t)=U_n(a)+\int_{[a,t)}z_n(r)\,d\mu_g(r)
	\end{displaymath}
	therefore gives, for every $t\in[a,b]$,
	\begin{displaymath}
		U(t)=U(a)+\int_{[a,t)}z(r)\,d\mu_g(r).
	\end{displaymath}
	Together with \eqref{eq:identify-W-limit}, this proves that $(u,z)$ belongs
	to the graph.  Convergence in the two Bochner factors is the same as
	convergence in the norm \eqref{eq:graph-norm}.  The graph is therefore
	complete.

	Now suppose that $B_0$ and $B_1$ are reflexive and that both exponents
	belong to $(1,\infty)$.  Then the Bochner spaces
	$L_g^{p_0}(B_0)$ and $L_g^{p_1}(B_1)$ are reflexive.  Their finite product
	is reflexive as well.  The first part of the proof shows that
	$\mathbb W_g^{p_0,p_1}$ is a closed linear subspace of that product, so it
	is reflexive.
\end{proof}

\begin{lem}[Compact maps improve weak convergence]\label{lem:compact-weak-strong}
Let $X$ and $Y$ be Banach spaces and let $T:X\to Y$ be a compact linear operator.  If
$x_n\rightharpoonup x$ in $X$, then $Tx_n\to Tx$ in $Y$.
\end{lem}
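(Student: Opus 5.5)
The plan is to prove the lemma by the standard subsequence argument, after checking that $T$ maps weakly convergent sequences to weakly convergent ones. I would carry out the following steps in order.

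First, I will show that $Tx_n\rightharpoonup Tx$ in $Y$. For every $y'\in Y'$, the functional $y'\circ T$ belongs to $X'$, because $T$ is bounded (compact linear operators are bounded). Hence
\begin{displaymath}
 \langle y',Tx_n\rangle=\langle y'\circ T,x_n\rangle
 \longrightarrow\langle y'\circ T,x\rangle=\langle y',Tx\rangle .
\end{displaymath}

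Second, I will obtain boundedness. A weakly convergent sequence is bounded by the uniform boundedness principle, applied to $(x_n)$ viewed as elements of $X''$. So $\sup_n\|x_n\|_X<\infty$, and by compactness of $T$ the set $\{Tx_n\}$ is relatively compact in $Y$.

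Third, I will argue by contradiction. Suppose $Tx_n\not\to Tx$. Then there are $\varepsilon>0$ and a subsequence with
\begin{displaymath}
 \|Tx_{n_k}-Tx\|_Y\geq\varepsilon
 \qquad\text{for all } k .
\end{displaymath}
By relative compactness, a further subsequence $Tx_{n_{k_j}}$ converges in norm to some $y\in Y$. Norm convergence implies weak convergence, so $Tx_{n_{k_j}}\rightharpoonup y$. By the first step, the same subsequence also converges weakly to $Tx$. Weak limits are unique because $Y'$ separates points (Hahn--Banach), so $y=Tx$. Then $\|Tx_{n_{k_j}}-Tx\|_Y\to0$, which contradicts the lower bound $\varepsilon$.

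There is no real obstacle. The only points that need stating explicitly are the boundedness of weakly convergent sequences (uniform boundedness) and the uniqueness of weak limits, since these are what let compactness be applied and the two limits be identified.
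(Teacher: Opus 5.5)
Your proof is correct and follows essentially the same route as the paper. Both use weak continuity of $T$, relative compactness of $(Tx_n)$, identification of every norm subsequential limit with $Tx$ via uniqueness of weak limits, and the subsequence principle (which you phrase as a contradiction argument); you also make explicit the uniform boundedness step showing $(x_n)$ is bounded, which the paper uses tacitly when it asserts that $(Tx_n)$ is relatively compact.
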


\begin{proof}
The sequence $(Tx_n)$ is relatively compact.  Every norm-convergent
subsequence has a limit $y\in Y$, while bounded linearity gives
$Tx_n\rightharpoonup Tx$ in $Y$.  Hence $y=Tx$.  Since every subsequence has
a further subsequence converging in norm to $Tx$, the whole sequence converges
to $Tx$ in $Y$.
\end{proof}

\begin{thm}[Aubin--Lions compactness for Stieltjes evolution graphs]
\label{thm:AubinLions}
	Let $g:\mathbb R\to\mathbb R$ be a derivator and let $B_0$, $B$ and
	$B_1$ be Banach spaces such that
	\begin{itemize}
		\item $B_0$ and $B_1$ are reflexive;
		\item $B_0\Subset B$;
		\item $B\hookrightarrow B_1$ continuously.
	\end{itemize}
	If $1<p_0,p_1<\infty$, then
	the state projection
	\begin{displaymath}
	 \pi_0:\mathbb W_g^{p_0,p_1}([a,b];B_0,B_1)
	 \longrightarrow L^{p_0}_g([a,b);B),
	 \qquad \pi_0(u,z)=u,
	\end{displaymath}
	is compact.  Equivalently, from every bounded sequence of admissible
	state--derivative pairs one can extract a subsequence whose state
	components converge strongly in $L_g^{p_0}([a,b);B)$.
\end{thm}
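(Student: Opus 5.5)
We reduce the compactness of $\pi_0$ to sequential convergence. Since the graph is reflexive by Proposition~\ref{prop:W-structure}, a bounded sequence $\boldsymbol u_n=(u_n,z_n)$ has a weakly convergent subsequence, $\boldsymbol u_n\rightharpoonup\boldsymbol u=(u,z)$. Subtracting the limit, we may assume $\boldsymbol u=0$: $u_n\rightharpoonup0$ in $L_g^{p_0}(B_0)$, $z_n\rightharpoonup0$ in $L_g^{p_1}(B_1)$, and $\|u_n\|_{L_g^{p_0}(B_0)}+\|z_n\|_{L_g^{p_1}(B_1)}\le K$. The degenerate case $M=0$ is trivial, so we take $M>0$. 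The goal is $u_n\to0$ in $L_g^{p_0}(B)$. By Lemma~\ref{l7.6} applied to $B_0\Subset B\hookrightarrow B_1$, for each $\varepsilon>0$ we have
\[
\|u_n\|_{L_g^{p_0}(B)}^{p_0}\le\varepsilon K^{p_0}+C_\varepsilon\|i_{01}u_n\|_{L_g^{p_0}(B_1)}^{p_0}.
\]
It therefore suffices to prove $\widetilde u_n\to0$ in $L_g^{p_0}(B_1)$, where $\widetilde u_n$ are the canonical representatives. These are uniformly bounded in $B_1$ by \eqref{eq:canonical-representative-bound}. By dominated convergence on the finite measure space, it is then enough to show $\widetilde u_n(t)\to0$ in $B_1$ for $\mu_g$-almost every $t$.

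At an atom $s\in D_g\cap[a,b)$, evaluation $u\mapsto u(s)$ is bounded from $L_g^{p_0}(B_0)$ to $B_0$ by Corollary~\ref{cor:canonical-lift}. Hence $u_n(s)\rightharpoonup0$ in $B_0$, and since $B_0\Subset B_1$ is compact, Lemma~\ref{lem:compact-weak-strong} gives $\widetilde u_n(s)=i_{01}u_n(s)\to0$ in $B_1$.

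For nonatomic points we use local averages. Take $t\in[a,b)\setminus(D_g\cup C_g\cup N_g)$ outside the (null) set where some $\widetilde u_n\neq i_{01}u_n$. For $h>0$, set $I_h=[t,t+h)$. Since $t\notin C_g\cup N_g$, we have $\mu_g(I_h)>0$ for all small $h$ (if not, $t$ would be a left endpoint of a plateau; after removing a further countable null set of such points, this holds). We also have $\mu_g(I_h)\to\mu_g(\{t\})=0$ as $h\downarrow0$. Decompose
\[
\widetilde u_n(t)=\frac{1}{\mu_g(I_h)}\int_{I_h}\bigl(\widetilde u_n(t)-\widetilde u_n(r)\bigr)\,d\mu_g(r)+i_{01}\Bigl(\frac{1}{\mu_g(I_h)}\int_{I_h}u_n\,d\mu_g\Bigr).
\]
By \eqref{eq:canonical-temporal-modulus} and Hölder, the first term is bounded by $K\mu_g(I_h)^{1-1/p_1}$ uniformly in $n$, so it is small once $h$ is fixed small enough. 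For that fixed $h$, the average $u\mapsto\mu_g(I_h)^{-1}\int_{I_h}u\,d\mu_g$ is a bounded linear map $L_g^{p_0}(B_0)\to B_0$. Therefore the averages converge weakly to $0$ in $B_0$, and compactness of $i_{01}$ (Lemma~\ref{lem:compact-weak-strong}) makes the second term tend to $0$ in $B_1$. Thus $\limsup_n\|\widetilde u_n(t)\|_{B_1}$ is as small as we like, hence zero. The remaining points form a $\mu_g$-null set.

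The main obstacle is the nonatomic step. One must check that the set of $t$ for which $\mu_g([t,t+h))$ vanishes for some small $h$ is $\mu_g$-null; these are essentially left endpoints of plateaus, countably many and nonatomic. One must also check that $t$ has room to the right, which holds for $t<b$, with at most the single point near $b$ excluded. Once this is settled, weak-to-strong conversion and dominated convergence combined with Ehrling's inequality complete the argument. The reformulation \emph{bounded sequence} $\Leftrightarrow$ \emph{compact operator} is standard: since $\pi_0$ maps bounded sequences to sequences with convergent subsequences, it is compact.
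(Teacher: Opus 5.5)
Your proposal is correct and follows the paper's proof essentially step by step: weak limit in the reflexive graph, bounded evaluation plus compactness of $i_{01}$ at atoms, local $\mu_g$-averages compared with the canonical representative through the H\"older modulus at nonatomic points, dominated convergence in $L_g^{p_0}(B_1)$, and Ehrling's inequality. The only variation is in the nonatomic step. You localize with right-sided intervals $[t,t+h)$ at points outside $C_g\cup N_g\cup D_g$; positivity of $\mu_g([t,t+h))$ does hold there, since a point with $g(t+h)=g(t)$ lying outside $C_g\cup D_g$ is a left plateau endpoint and so already belongs to $N_g^-$. The paper instead uses two-sided half-open intervals around nonatomic points of $\operatorname{supp}\mu_g$ and discards the null complement of the support.
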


\begin{proof}
	Write $\mu=\mu_g$ and $M=\mu([a,b))$.  If $M=0$, the target space is the
	zero space, and the result is immediate.  We assume from now on that
	$M>0$.

	Let $\boldsymbol u_n=(u_n,z_n)$ be bounded in
	$\mathbb W:=\mathbb W_g^{p_0,p_1}([a,b];B_0,B_1)$.  By
	Proposition~\ref{prop:W-structure}, this graph is reflexive.  Passing to a
	subsequence, there is $\boldsymbol u=(u,z)\in\mathbb W$ such that
	\begin{displaymath}
	 \boldsymbol u_n\rightharpoonup\boldsymbol u
	 \quad\text{weakly in }\mathbb W.
	\end{displaymath}
	Set
	\begin{displaymath}
	 w_n=u_n-u,
	 \qquad q_n=z_n-z,
	 \qquad\boldsymbol w_n=(w_n,q_n).
	\end{displaymath}
	The sequence $\boldsymbol w_n$ converges weakly to zero in the graph.
	The two coordinate projections of the graph into its Bochner factors are
	bounded linear maps; consequently, $w_n$ and $q_n$ converge weakly to zero
	in their respective Bochner spaces.  There is also a constant $K>0$ such that
	\begin{equation}\label{eq:W-bounds-Aubin-Lions}
	 \|w_n\|_{L_g^{p_0}(B_0)}+\|q_n\|_{L_g^{p_1}(B_1)}\leq K
	 \qquad(n\in\mathbb N).
	\end{equation}

	\smallskip
	\noindent\emph{Uniform control of the canonical representatives.}
	Let $W_n:[a,b]\to B_1$ be the canonical representative associated with
	$\boldsymbol w_n$.  Proposition~\ref{prop:canonical-temporal-control}
	and \eqref{eq:W-bounds-Aubin-Lions} give a constant $K_1$, independent of
	$n$, such that
	\begin{equation}\label{eq:uniform-B1-Aubin-Lions}
	 \sup_{n\in\mathbb N}\sup_{t\in[a,b]}\|W_n(t)\|_{B_1}\leq K_1.
	\end{equation}
	If $a\leq r<t\leq b$ and $\alpha=1-1/p_1>0$, the same proposition and
	Hölder's inequality give
	\begin{equation}\label{eq:time-control-Aubin-Lions}
	 \|W_n(t)-W_n(r)\|_{B_1}
	 \leq K\mu([r,t))^\alpha.
	\end{equation}

	\smallskip
	\noindent\emph{Pointwise convergence at the atoms.}
	We now prove pointwise convergence of these representatives on a set of
	full $\mu$-measure.  Let first $s\in D_g\cap[a,b)$ and put
	$\delta_s=\mu(\{s\})>0$.  Evaluation at $s$ is a bounded linear map from
	$L_g^{p_0}(B_0)$ to $B_0$, because
	\begin{equation}\label{eq:atomic-evaluation-bound}
	 \|v(s)\|_{B_0}^{p_0}\delta_s
	 \leq\int_{[a,b)}\|v(r)\|_{B_0}^{p_0}\,d\mu(r)
	\end{equation}
	for every $v\in L_g^{p_0}(B_0)$.  The value at an atom is well defined for
	a Bochner equivalence class.  Moreover,
	$W_n(s)=i_{01}w_n(s)$, since the canonical representative and the state
	component agree almost everywhere and $\{s\}$ has positive measure.
	Thus $w_n(s)\rightharpoonup0$ in $B_0$.  The composition
	$i_{01}=i_{B1}\circ i_{0B}:B_0\to B_1$ is compact.
	Lemma~\ref{lem:compact-weak-strong} therefore gives
	$w_n(s)\to0$ in $B_1$.
	Consequently,
	\begin{equation}\label{eq:atomic-pointwise-convergence}
	 W_n(s)\longrightarrow0\quad\text{in }B_1,
	 \qquad s\in D_g\cap[a,b).
	\end{equation}

	\smallskip
	\noindent\emph{Pointwise convergence at nonatomic support points.}
	Let now $s\in\operatorname{supp}\mu\cap[a,b)$ and suppose that
	$\mu(\{s\})=0$, where the support is taken in the relative topology of
	$[a,b)$.  Choose a decreasing sequence $\rho_k\downarrow0$ and put
	\begin{displaymath}
	 I_k=[\max\{a,s-\rho_k\},\min\{b,s+\rho_k\}).
	\end{displaymath}
	The intervals are nested, contain $s$, and have intersection $\{s\}$.
	Since $s$ belongs to
	the support of $\mu$, $m_k:=\mu(I_k)>0$.  Continuity from above for the
	finite measure $\mu$ gives
	\begin{displaymath}
	 m_k\longrightarrow\mu(\{s\})=0.
	\end{displaymath}
	For fixed $k$, consider the Bochner average
	\begin{displaymath}
	 A_{n,k}=\frac1{m_k}\int_{I_k}w_n(r)\,d\mu(r)\in B_0.
	\end{displaymath}
	This integral is well defined for a strongly measurable representative
	of the Bochner class; changing the representative does not change the
	average.  The averaging operator is bounded, since Hölder's inequality gives
	\begin{displaymath}
	 \|A_{n,k}\|_{B_0}
	 \leq m_k^{-1/p_0}\|w_n\|_{L_g^{p_0}(I_k;B_0)}.
	\end{displaymath}
	The averaging map
	$L_g^{p_0}([a,b);B_0)\to B_0$ is therefore bounded.  Since
	$w_n\rightharpoonup0$ in the domain, it follows that
	$A_{n,k}\rightharpoonup0$ in $B_0$ as $n\to\infty$.  Applying
	Lemma~\ref{lem:compact-weak-strong} to $i_{01}:B_0\to B_1$ yields
	\begin{equation}\label{eq:average-convergence}
	 i_{01}A_{n,k}\longrightarrow0\quad\text{in }B_1
	 \quad\text{for every fixed }k.
	\end{equation}

	The equality $W_n=i_{01}w_n$ holds $\mu$-almost everywhere.  Hence, as an
	identity in $B_1$,
	\begin{displaymath}
	 W_n(s)-i_{01}A_{n,k}
	 =\frac1{m_k}\int_{I_k}\bigl(W_n(s)-W_n(r)\bigr)\,d\mu(r).
	\end{displaymath}
	For $r\in I_k$, the half-open interval with endpoints $r$ and $s$ is
	contained in $I_k$.  Estimate \eqref{eq:time-control-Aubin-Lions}, used in
	the appropriate order of the endpoints, therefore gives
	\begin{equation}\label{eq:value-average-estimate}
	 \|W_n(s)-i_{01}A_{n,k}\|_{B_1}
	 \leq\frac1{m_k}\int_{I_k}K m_k^\alpha\,d\mu
	 =K m_k^\alpha.
	\end{equation}
	Combining \eqref{eq:average-convergence} and
	\eqref{eq:value-average-estimate}, we obtain
	\begin{displaymath}
	 \limsup_{n\to\infty}\|W_n(s)\|_{B_1}\leq K m_k^\alpha.
	\end{displaymath}
	Letting $k\to\infty$ shows that $W_n(s)\to0$ in $B_1$.  The complement
	of $\operatorname{supp}\mu$ is $\mu$-null.  To recall the short argument,
	every point of that complement has an open neighbourhood of zero measure;
	a countable base of the real line reduces the union of all these
	neighbourhoods to a countable union of null sets.  Together with the atomic
	case \eqref{eq:atomic-pointwise-convergence}, this proves
	\begin{equation}\label{eq:pointwise-B1-Aubin-Lions}
	 W_n(s)\longrightarrow0\quad\text{in }B_1
	 \quad\text{for $\mu$-almost every }s\in[a,b).
	\end{equation}

	\smallskip
	\noindent\emph{Strong convergence in the weak ambient space.}
	By \eqref{eq:uniform-B1-Aubin-Lions},
	$\|W_n(s)\|_{B_1}^{p_0}\leq K_1^{p_0}$, and this constant is integrable
	because $M<\infty$.  The Bochner state $w_n$ agrees almost everywhere with $W_n$. Hence
	$\|W_n(\cdot)\|_{B_1}^{p_0}$ agrees almost everywhere with the measurable
	function $\|w_n(\cdot)\|_{B_1}^{p_0}$ and is measurable with respect to
	the completed measure. The scalar dominated convergence theorem now gives
	\begin{equation}\label{eq:strong-B1-Aubin-Lions}
	 w_n\longrightarrow0
	 \quad\text{strongly in }L_g^{p_0}([a,b);B_1).
	\end{equation}

	\smallskip
	\noindent\emph{Ehrling's interpolation step.}
	It remains to pass from $B_1$ to $B$.  By Ehrling's lemma,
	Lemma~\ref{l7.6}, for every $\varepsilon>0$ there is
	$C_\varepsilon>0$ such that
	\begin{displaymath}
	 \|x\|_B^{p_0}
	 \leq\varepsilon\|x\|_{B_0}^{p_0}
	 +C_\varepsilon\|x\|_{B_1}^{p_0},
	 \qquad x\in B_0.
	\end{displaymath}
	This inequality holds for $w_n(s)$ at almost every $s$.  Integrating with
	respect to $\mu$ gives
	\begin{displaymath}
	 \|w_n\|_{L_g^{p_0}(B)}^{p_0}
	 \leq\varepsilon\|w_n\|_{L_g^{p_0}(B_0)}^{p_0}
	 +C_\varepsilon\|w_n\|_{L_g^{p_0}(B_1)}^{p_0}.
	\end{displaymath}
	Taking the upper limit as $n\to\infty$ and using
	\eqref{eq:W-bounds-Aubin-Lions} and
	\eqref{eq:strong-B1-Aubin-Lions}, we find
	\begin{displaymath}
	 \limsup_{n\to\infty}\|w_n\|_{L_g^{p_0}(B)}^{p_0}
	 \leq\varepsilon K^{p_0}.
	\end{displaymath}
	Since $\varepsilon$ is arbitrary, $w_n\to0$ in $L_g^{p_0}(B)$.  Thus the
	state components of the selected subsequence converge strongly to $u$,
	which proves compactness of $\pi_0$.
\end{proof}

\begin{proof}[Alternative proof via finite partitions of the $g$-range]
Write $\mu=\mu_g|_{[a,b)}$ and $M=\mu([a,b))=g(b)-g(a)$. If $M=0$, the target space is the zero space and there is nothing to prove. Assume $M>0$, and let $\mathcal F$ be a bounded subset of the evolution graph. The empty family requires no argument, so assume that $\mathcal F$ is nonempty. Choose finite constants $K_0,K_z\ge0$ such that

\begin{displaymath}
\|u\|_{L_g^{p_0}(B_0)}\le K_0,
\qquad
\|z\|_{L_g^{p_1}(B_1)}\le K_z,
\qquad (u,z)\in\mathcal F.
\end{displaymath}

For each pair, let $U:[a,b]\to B_1$ be its canonical representative. Put $\beta=1-1/p_1>0$. The integral representation and Hölder's inequality give, whenever $a\le r<t\le b$,

\begin{displaymath}
\|U(t)-U(r)\|_{B_1}
\le \int_{[r,t)}\|z(s)\|_{B_1}\,d\mu(s)
\le K_z\mu([r,t))^\beta
=K_z\bigl(g(t)-g(r)\bigr)^\beta.
\end{displaymath}

Fix $\delta>0$. Let $N=\lceil M/\delta\rceil$, $h=M/N\le\delta$, and $y_j=g(a)+jh$ for $j=0,\ldots,N$. Partition the whole interval of clock values by

\begin{displaymath}
J_j=[y_{j-1},y_j)\quad(1\le j<N),
\qquad J_N=[y_{N-1},y_N].
\end{displaymath}

When $N=1$, only the last interval is used. Define

\begin{displaymath}
E_j=\{t\in[a,b):g(t)\in J_j\}.
\end{displaymath}

The monotone function $g$ is Borel measurable, so each $E_j$ is Borel. These sets are pairwise disjoint and cover $[a,b)$. The last interval includes $g(b)$, which may already be attained before $b$. Each nonempty cell is order-convex: if two of its points enclose a third point, monotonicity places the clock value of that third point in the same interval $J_j$.

For any $r,t\in E_j$, after ordering the endpoints,

\begin{displaymath}
\mu([\min(r,t),\max(r,t)))=|g(t)-g(r)|\le h\le\delta.
\end{displaymath}

Consequently,

\begin{displaymath}
\|U(t)-U(r)\|_{B_1}\le K_z\delta^\beta,
\qquad r,t\in E_j.
\end{displaymath}

This estimate does not assert that $\mu(E_j)\le\delta$. In particular, an atom in a cell may have mass larger than $\delta$. Its right jump then prevents a later point lying beyond that jump from belonging to the same cell; when the atom is the upper endpoint of an increment, its mass is excluded by the convention $[r,t)$. Thus the preceding estimate remains valid. Plateaux and points on boundaries of clock intervals also cause no double counting, because every clock value is assigned to exactly one $J_j$.

Let $S=\{j:\mu(E_j)>0\}$ and write $m_j=\mu(E_j)$ for $j\in S$. The discarded cells form a null set, and $\sum_{j\in S}m_j=M$. Define the Bochner averages in $B_0$ by

\begin{displaymath}
A_j(u)=\frac1{m_j}\int_{E_j}u(r)\,d\mu(r),
\qquad j\in S.
\end{displaymath}

These integrals exist because the states are strongly measurable and belong to $L_g^{p_0}(B_0)$ on a finite measure space. Their values are independent of the representative of the state class. Hölder's inequality yields

\begin{displaymath}
\|A_j(u)\|_{B_0}
\le m_j^{-1/p_0}\|u\|_{L_g^{p_0}(E_j;B_0)}
\le m_j^{-1/p_0}K_0.
\end{displaymath}

Thus, for each fixed partition and each fixed retained cell, the family of averages is bounded in $B_0$. Set

\begin{displaymath}
P_\delta u=\sum_{j\in S}A_j(u)\mathbf1_{E_j},
\end{displaymath}

with value zero on the discarded cells. This is a $B_0$-valued simple function. Since $i_{01}$ is bounded linear, it commutes with the Bochner integral. Using $U=i_{01}u$ almost everywhere, for $t\in E_j$ we obtain in $B_1$

\begin{displaymath}
U(t)-i_{01}A_j(u)
=\frac1{m_j}\int_{E_j}\bigl(U(t)-U(r)\bigr)\,d\mu(r).
\end{displaymath}

The oscillation estimate therefore gives

\begin{displaymath}
\|U(t)-i_{01}A_j(u)\|_{B_1}\le K_z\delta^\beta.
\end{displaymath}

Passing to the state classes, raising to the power $p_0$, and summing over the retained cells, we find

\begin{displaymath}
\|i_{01}(u-P_\delta u)\|_{L_g^{p_0}(B_1)}^{p_0}
\le\sum_{j\in S}m_jK_z^{p_0}\delta^{\beta p_0}
=M K_z^{p_0}\delta^{\beta p_0}.
\end{displaymath}

In particular,

\begin{displaymath}
\|i_{01}(u-P_\delta u)\|_{L_g^{p_0}(B_1)}
\le K_zM^{1/p_0}\delta^\beta
\qquad ((u,z)\in\mathcal F).
\end{displaymath}

For a fixed partition, the inclusion $i_{01}:B_0\to B_1$ is compact, as the composition of the compact inclusion into $B$ and the continuous inclusion into $B_1$. Hence

\begin{displaymath}
C_j=\overline{\{i_{01}A_j(u):(u,z)\in\mathcal F\}}^{B_1}
\end{displaymath}

is compact for each $j\in S$. There are only finitely many retained cells, so $\prod_{j\in S}C_j$ is compact. Explicitly, successive subsequence extractions in the finitely many coordinates give a subsequence convergent in every coordinate, and hence in the finite product.

The map

\begin{displaymath}
\Phi:\prod_{j\in S}C_j\longrightarrow L_g^{p_0}(B_1),
\qquad \Phi((v_j))=\sum_{j\in S}v_j\mathbf1_{E_j},
\end{displaymath}

is continuous, since

\begin{displaymath}
\|\Phi((v_j))-\Phi((w_j))\|_{L_g^{p_0}(B_1)}^{p_0}
=\sum_{j\in S}m_j\|v_j-w_j\|_{B_1}^{p_0}.
\end{displaymath}

Its image is compact and contains every $i_{01}P_\delta u$. Thus the approximants form a relatively compact family in $L_g^{p_0}(B_1)$.

To prove total boundedness of the original state family, let $\eta>0$. Choose $\delta$ so that $K_zM^{1/p_0}\delta^\beta<\eta/3$; if $K_z=0$, any positive $\delta$ is suitable. The compact set $\Phi(\prod_{j\in S}C_j)$ has a finite $\eta/3$-net. Every state is within $\eta/3$ of its approximant, which is within $\eta/3$ of a point of that net. Hence the same finite set is an $\eta$-net for the states in $L_g^{p_0}(B_1)$. This proves total boundedness. Since $B_1$ is Banach, $L_g^{p_0}(B_1)$ is complete, and the state family is relatively compact there.

Take any sequence of pairs in $\mathcal F$ and extract a subsequence whose states are Cauchy in $L_g^{p_0}(B_1)$. The integrated form of Ehrling's inequality gives, for every $\varepsilon>0$,

\begin{displaymath}
\begin{aligned}
\|u_n-u_m\|_{L_g^{p_0}(B)}^{p_0}
&\le\varepsilon\|u_n-u_m\|_{L_g^{p_0}(B_0)}^{p_0}
+C_\varepsilon\|i_{01}(u_n-u_m)\|_{L_g^{p_0}(B_1)}^{p_0}\\
&\le\varepsilon(2K_0)^{p_0}
+C_\varepsilon\|i_{01}(u_n-u_m)\|_{L_g^{p_0}(B_1)}^{p_0}.
\end{aligned}
\end{displaymath}

First keep $\varepsilon$ fixed and let $n,m\to\infty$; then let $\varepsilon\downarrow0$. The subsequence is Cauchy in $L_g^{p_0}(B)$, which is complete because $B$ is Banach. Its states therefore converge strongly in that space. Since the sequence was arbitrary, the state projection is compact.
\end{proof}

The alternative argument appears to use fewer structural hypotheses; endpoint and generalized formulations are not pursued here.

\begin{rem}[Relation with the classical proof]
	\label{rem:no-reparametrization}
	When
	\begin{displaymath}
		g(t)=t,
	\end{displaymath}
	the Lebesgue--Stieltjes measure associated with $g$ is ordinary
	Lebesgue measure on $[a,b)$. In particular,
	\begin{displaymath}
		\mu_g([s,t))=t-s
	\end{displaymath}
	for every $a\leq s<t\leq b$, the support of $\mu_g$ is the whole
	interval, and $\mu_g$ has no atoms. The estimate
	\eqref{eq:time-control-Aubin-Lions} then becomes
	\begin{displaymath}
		\|W_n(t)-W_n(s)\|_{B_1}
		\leq
		K(t-s)^{1-1/p_1}.
	\end{displaymath}
	Thus small intervals in the ordinary time variable produce uniformly
	small increments in $B_1$. Moreover, the local averages appearing in
	the proof are the usual averages with respect to Lebesgue measure. The
	argument therefore reduces to the standard temporal compactness
	mechanism used in the classical Aubin--Lions theorem.
	
	For a general nondecreasing left-continuous derivator $g$, the effective
	size of a time interval is not its Euclidean length but its clock mass
	\begin{displaymath}
		\mu_g([s,t))=g(t)-g(s).
	\end{displaymath}
	Accordingly, estimate \eqref{eq:time-control-Aubin-Lions} controls the
	increment of a trajectory through the amount of clock mass accumulated
	between $s$ and $t$, rather than through $t-s$.
	
	It is tempting to introduce a new time variable
	\begin{displaymath}
		\tau=g(t)
	\end{displaymath}
	and try to reduce the problem to the classical one. Such a direct
	reparametrization is available under stronger assumptions, for example
	when $g$ is continuous and strictly increasing. In the present generality,
	however, it does not provide a uniform reduction of the proof.
	
	Indeed, if $g$ is constant on a nontrivial interval, then all points of
	that interval have the same clock value. Hence $g$ is not injective and
	has no ordinary inverse. The plateau has zero $\mu_g$-mass in its
	interior, so it is invisible to the corresponding Bochner spaces, but it
	cannot be represented by an invertible change of the physical-time
	variable.
	
	If $g$ has a right jump at $t$, then
	\begin{displaymath}
		\mu_g(\{t\})
		=
		\Delta^+g(t)
		=
		g(t^+)-g(t)
		>0.
	\end{displaymath}
	Thus a single physical-time point carries a positive amount of clock
	mass. The map $t\mapsto g(t)$ skips the clock interval between $g(t)$
	and $g(t^+)$. A generalized inverse may encode this mass, but only by
	collapsing an interval of the new time variable onto the single point
	$t$. This is not an ordinary one-to-one reparametrization and it does
	not eliminate the need to treat atoms separately. In the proof of
	Theorem~\ref{thm:AubinLions}, this atomic contribution is handled
	directly by bounded point evaluation and the compact embedding from
	$B_0$ into $B_1$.
	
	A further obstruction is produced by a singular continuous component of
	$\mu_g$. In general, there need not exist a function
	\begin{displaymath}
		\gamma\in L^1(a,b)
	\end{displaymath}
	such that
	\begin{displaymath}
		d\mu_g(t)=\gamma(t)\,dt.
	\end{displaymath}
	Consequently, one cannot in general rewrite every Stieltjes integral as
	an ordinary weighted Lebesgue integral. In particular, replacing
	$d\mu_g$ by $g'(t)\,dt$ would lose the singular continuous part: it may
	happen that $g'(t)=0$ for Lebesgue-almost every $t$ while the associated
	Lebesgue--Stieltjes measure is nonzero.
	
	For these reasons, the proof is formulated directly on the finite
	measure space
	\begin{displaymath}
		([a,b),\mathcal B([a,b)),\mu_g).
	\end{displaymath}
	At an atom, compactness is obtained from bounded point evaluation. At a
	nonatomic point $s$ in the support of $\mu_g$, one chooses shrinking
	half-open intervals $I_k$ containing $s$ such that
	\begin{displaymath}
		0<\mu_g(I_k)\longrightarrow0
	\end{displaymath}
	and considers the corresponding Bochner averages
	\begin{displaymath}
		A_{n,k}
		=
		\frac{1}{\mu_g(I_k)}
		\int_{I_k}w_n(r)\,d\mu_g(r).
	\end{displaymath}
	The support condition guarantees that the denominator is positive, while
	nonatomicity and continuity from above imply that the clock masses of the
	intervals tend to zero. Estimate \eqref{eq:time-control-Aubin-Lions}
	then compares the point value of the canonical representative with its
	local average.
	
	Hence the argument does not require $g$ to be injective, strictly
	increasing or absolutely continuous. The absolutely continuous,
	singular continuous and atomic parts of the clock are treated within one
	measure-theoretic framework. When $g(t)=t$, this framework becomes the
	usual Lebesgue-time proof. For a general derivator, it is $\mu_g$, rather
	than a classical reparametrization through $g$, that supplies the correct
	notions of temporal size, localization and compactness.
\end{rem}

\section{Consequences, time scales and model clocks}\label{sec:consequences}

This section first records convergence consequences that are useful in
nonlinear problems and recovers the classical and Gelfand-triple formulations.
It then identifies bounded time scales with canonical Stieltjes clocks and
specializes the compactness theorem to purely atomic and mixed clocks.  The
section ends with a separate discussion of the endpoint mechanisms and the
additional hypotheses that would be needed beyond the reflexive
$1<p_0,p_1<\infty$ setting.

\subsection{Functional-analytic consequences}\label{subsec:functional-consequences}

The state projection is automatically bounded.  If $c_{0B}$ denotes the
norm of the embedding $B_0\hookrightarrow B$, then
\begin{displaymath}
 \|\pi_0(u,z)\|_{L_g^{p_0}(B)}
 \leq c_{0B}\|u\|_{L_g^{p_0}(B_0)}
 \leq c_{0B}\|(u,z)\|_{\mathbb W_g^{p_0,p_1}}.
\end{displaymath}
The content of Theorem~\ref{thm:AubinLions} is therefore not continuity but
compactness: bounded subsets of the graph are mapped to relatively compact
subsets of $L_g^{p_0}(B)$.

\begin{cor}[Strong and almost-everywhere convergence]\label{cor:strong-ae}
Under the hypotheses of Theorem~\ref{thm:AubinLions}, every bounded sequence
$(u_n,z_n)$ in the evolution graph has a subsequence and a state
$u\in L_g^{p_0}([a,b);B)$ such that
\begin{displaymath}
 u_n\to u\quad\text{in }L_g^{p_0}([a,b);B)
\end{displaymath}
and, after a further subsequence,
\begin{displaymath}
 u_n(t)\to u(t)\quad\text{in }B
 \quad\text{for $g$-almost every }t\in[a,b).
\end{displaymath}
\end{cor}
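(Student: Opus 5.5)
The plan is to read the corollary directly off Theorem~\ref{thm:AubinLions}, adding only the standard extraction of an almost-everywhere convergent subsequence from a strongly convergent one in a Bochner space.

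First, let $(u_n,z_n)$ be bounded in $\mathbb W_g^{p_0,p_1}([a,b];B_0,B_1)$. Since $\pi_0$ is compact, the sequence $(u_n)$ is relatively compact in $L_g^{p_0}([a,b);B)$, so some subsequence converges there to a limit $u$. The proof of Theorem~\ref{thm:AubinLions} identifies this limit more precisely: it is the state component of the weak limit in the reflexive graph (Proposition~\ref{prop:W-structure}), so $u\in L_g^{p_0}([a,b);B_0)$ as well. Only membership in $L_g^{p_0}([a,b);B)$ is needed, however.

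Second, I will extract a pointwise convergent subsequence. After relabeling, choose $n_k$ with $\|u_{n_k}-u\|_{L_g^{p_0}(B)}\le 2^{-k}$. Put
\[
 \phi_k(t)=\|u_{n_k}(t)-u(t)\|_B .
\]
These are nonnegative measurable scalar functions, because the differences are strongly measurable. By Minkowski's inequality and monotone convergence, the function $\Phi:=\sum_k\phi_k$ satisfies $\|\Phi\|_{L_g^{p_0}}\le\sum_k 2^{-k}<\infty$. Hence $\Phi<\infty$ $\mu_g$-almost everywhere, so $\phi_k(t)\to0$ for $g$-almost every $t\in[a,b)$. Equivalently, one may apply the scalar Riesz--Fischer subsequence theorem to the functions $\phi_k$, which converge to $0$ in $L_g^{p_0}([a,b))$.

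There is no real obstacle here. The only point requiring care is to fix strongly measurable representatives of the Bochner classes, so that the pointwise statement makes sense. The resulting exceptional set is $\mu_g$-null, and changing representatives alters it only by another null set. Atoms cause no difficulty, since values at atoms are determined by the class.
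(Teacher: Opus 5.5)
Your proposal is correct and follows essentially the paper's argument: compactness of $\pi_0$ gives the strong limit, and a Riesz--Fischer extraction (Minkowski plus monotone convergence) gives the almost-everywhere subsequence. The only difference is that you sum the distances $\|u_{n_k}(t)-u(t)\|_B$ to the already known limit $u$, whereas the paper sums consecutive increments and must then identify the resulting pointwise limit with $u$ through convergence in measure; your version spares you that identification step.
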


\begin{proof}
The strong convergence follows from Theorem~\ref{thm:AubinLions}.  Choose
representatives of the state classes and then a further subsequence, not
relabelled, such that
\begin{displaymath}
 \sum_{n=1}^{\infty}
 \|u_{n+1}-u_n\|_{L_g^{p_0}(B)}<\infty.
\end{displaymath}
For $N\geq1$, Minkowski's inequality gives
\begin{displaymath}
 \left\|\sum_{n=1}^{N}
 \|u_{n+1}(\cdot)-u_n(\cdot)\|_B\right\|_{L_g^{p_0}}
 \leq
 \sum_{n=1}^{N}
 \|u_{n+1}-u_n\|_{L_g^{p_0}(B)}.
\end{displaymath}
The partial sums on the left are nonnegative and increase pointwise. Passing
to the limit by the monotone convergence theorem, applied to their
$p_0$-th powers, yields
\begin{displaymath}
 \left\|\sum_{n=1}^{\infty}
 \|u_{n+1}(\cdot)-u_n(\cdot)\|_B\right\|_{L_g^{p_0}}
 \leq
 \sum_{n=1}^{\infty}
 \|u_{n+1}-u_n\|_{L_g^{p_0}(B)}<\infty.
\end{displaymath}
Hence the series of pointwise increments is finite for $g$-almost every
$t$, and $(u_n(t))$ is Cauchy in $B$ there.  Denote its pointwise limit by
$v(t)$.  The subsequence converges to $v$ in measure, while it also converges
to $u$ in $L_g^{p_0}(B)$ and therefore in measure.  Uniqueness of limits in
measure yields $v=u$ almost everywhere.
\end{proof}

\begin{cor}[Passage through continuous nonlinearities]\label{cor:nonlinear-passage}
Assume the hypotheses of Theorem~\ref{thm:AubinLions}, and let
$F:B\to Z$ be continuous, where $Z$ is a Banach space.  If $(u_n,z_n)$ is
bounded in the evolution graph, then a subsequence satisfies
\begin{displaymath}
 u_n\to u\quad\text{in }L_g^{p_0}(B),
 \qquad
 F(u_n)\to F(u)\quad\text{in measure on }([a,b),\mu_g).
\end{displaymath}
If, for some $r\in[1,\infty)$, the family
$(\|F(u_n)\|_Z^r)$ is uniformly integrable, then
$F(u_n)\to F(u)$ strongly in $L_g^r(Z)$.
\end{cor}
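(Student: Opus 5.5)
The first assertion follows from Theorem~\ref{thm:AubinLions} and Corollary~\ref{cor:strong-ae}. Given a bounded sequence $(u_n,z_n)$ in the evolution graph, I extract a subsequence with $u_n\to u$ in $L_g^{p_0}([a,b);B)$. Then I extract a further subsequence, not relabelled, such that $u_n(t)\to u(t)$ in $B$ for all $t$ outside a $\mu_g$-null set $N$, after fixing strongly measurable representatives.

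Continuity of $F:B\to Z$ then gives $F(u_n(t))\to F(u(t))$ in $Z$ for every $t\notin N$. Each $F\circ u_n$ and $F\circ u$ is strongly measurable, since it is the pointwise limit of $F\circ s_k$ for simple $s_k\to u_n$ almost everywhere, and each $F\circ s_k$ is simple. Hence $\|F(u_n)-F(u)\|_Z$ is measurable. Because $\mu_g([a,b))=M<\infty$, Egorov's theorem turns almost-everywhere convergence into convergence in measure. This proves the second assertion. If one prefers convergence in measure for the whole original subsequence, the subsequence principle (every subsequence has a further a.e.-convergent subsequence) gives it directly; but the statement only asks for some subsequence.

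For the $L_g^r$ claim I apply Vitali's convergence theorem on the finite measure space $([a,b),\mu_g)$ to the scalar functions
\[
\varphi_n=\|F(u_n)-F(u)\|_Z^r .
\]
These tend to zero almost everywhere. The main obstacle is controlling $\|F(u)\|_Z^r$, which must be integrable and uniformly integrable together with the $u_n$-terms. By Fatou's lemma along the a.e.-convergent subsequence,
\[
\int_E\|F(u)\|_Z^r\,d\mu_g\le\liminf_n\int_E\|F(u_n)\|_Z^r\,d\mu_g
\]
for every measurable $E$. Uniform integrability on a finite measure space gives $\sup_n\int\|F(u_n)\|^r<\infty$, so $\|F(u)\|_Z^r$ is integrable. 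Therefore $F(u)\in L_g^r(Z)$.

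Next, $\varphi_n\le 2^{r-1}\bigl(\|F(u_n)\|_Z^r+\|F(u)\|_Z^r\bigr)$. The family $(\varphi_n)$ is uniformly integrable, being dominated by the sum of a uniformly integrable family and a single integrable function. Vitali's theorem, equivalently the finite-measure uniform-integrability version of dominated convergence, gives $\int\varphi_n\,d\mu_g\to0$, that is, $F(u_n)\to F(u)$ in $L_g^r([a,b);Z)$.
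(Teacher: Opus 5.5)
Your proposal is correct and follows essentially the same route as the paper's proof. Both arguments extract a subsequence via Theorem~\ref{thm:AubinLions} and Corollary~\ref{cor:strong-ae}, get strong measurability of $F(u_n)$ by composing simple approximants with $F$, and pass from almost-everywhere to in-measure convergence on the finite measure space; both then use Fatou's lemma for the integrability of $\|F(u)\|_Z^r$, the bound $\|F(u_n)-F(u)\|_Z^r\le 2^{r-1}(\|F(u_n)\|_Z^r+\|F(u)\|_Z^r)$, and Vitali's theorem. Your step $\sup_n\int\|F(u_n)\|_Z^r\,d\mu_g<\infty$ is justified because uniform integrability in the paper's sense includes $L^1$-boundedness (or, equivalently, the tail condition). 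This point matters because $\mu_g$ may have atoms, and there equi-absolute continuity alone would not give boundedness.
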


\begin{proof}
Corollary~\ref{cor:strong-ae} gives, after extraction and a choice of
representatives, $u_n(t)\to u(t)$ in $B$ for $g$-almost every $t$.
Since each $u_n$ and $u$ is strongly measurable, it is the almost-everywhere
limit of simple functions with values in an essentially separable subset of
$B$. Composing those simple approximations with the continuous map $F$
shows that $F(u_n)$ and $F(u)$ are strongly measurable. Pointwise
continuity gives
$F(u_n(t))\to F(u(t))$ in $Z$ almost everywhere, hence in measure.

Assume now that $(\|F(u_n)\|_Z^r)$ is uniformly integrable. Fatou's lemma,
applied after the almost-everywhere extraction, shows that
$\|F(u)\|_Z^r$ is integrable. Moreover,
\begin{displaymath}
 \|F(u_n)-F(u)\|_Z^r
 \leq 2^{r-1}
 \bigl(\|F(u_n)\|_Z^r+\|F(u)\|_Z^r\bigr).
\end{displaymath}
The first family on the right is uniformly integrable by hypothesis, while
the singleton containing the integrable function $\|F(u)\|_Z^r$ is
uniformly integrable by the absolute continuity of its integral. The family
$(\|F(u_n)-F(u)\|_Z^r)$ is therefore uniformly integrable. Vitali's theorem
and convergence in measure imply
\begin{displaymath}
 \int_{[a,b)}\|F(u_n(t))-F(u(t))\|_Z^r\,d\mu_g(t)
 \longrightarrow0,
\end{displaymath}
which is exactly strong convergence in $L_g^r(Z)$.
\end{proof}

\begin{cor}[Classical Aubin--Lions compactness]\label{cor:classical-aubin-lions}
	Let $B_0$ and $B_1$ be reflexive Banach spaces,
	$B_0\Subset B\hookrightarrow B_1$, and $1<p_0,p_1<\infty$.  Then the
	space
	\begin{displaymath}
	 \mathcal W^{p_0,p_1}(a,b;B_0,B_1)
	 :=\{u\in L^{p_0}(a,b;B_0):
	       \partial_tu\in L^{p_1}(a,b;B_1)\},
	\end{displaymath}
	endowed with the graph norm
	\begin{displaymath}
	 \|u\|_{L^{p_0}(B_0)}+\|\partial_tu\|_{L^{p_1}(B_1)},
	\end{displaymath}
	is compactly embedded in $L^{p_0}(a,b;B)$.
\end{cor}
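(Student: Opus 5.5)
The plan is to obtain this as the special case $g(t)=t$ of Theorem~\ref{thm:AubinLions}, after identifying the classical space $\mathcal W^{p_0,p_1}(a,b;B_0,B_1)$ with the Stieltjes evolution graph $\mathbb W_g^{p_0,p_1}([a,b];B_0,B_1)$. For $g(t)=t$ the measure $\mu_g$ is Lebesgue measure on $[a,b)$. Hence $L_g^{p}([a,b);X)=L^p(a,b;X)$ isometrically for every Banach space $X$, because the single endpoint $\{a\}$ and any further null set are irrelevant. The derivator has no atoms, so there is no terminal atom. This is exactly the situation described at the end of Example~\ref{ex:terminal-atom-nonuniqueness}, where the graph should reduce to a space of states.

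First I define $\Theta:\mathcal W^{p_0,p_1}\to\mathbb W_g^{p_0,p_1}$ by $\Theta u=(u,\partial_tu)$, where $\partial_t u$ is the distributional derivative of $i_{01}u$ in $B_1$. I use the standard characterization of vector-valued distributional derivatives: for $u\in L^1(a,b;B_1)$ with $\partial_tu\in L^1(a,b;B_1)$, there is $x\in B_1$ with $u(t)=x+\int_a^t\partial_tu(r)\,dr$ for a.e.\ $t$. This follows by testing with scalar test functions and applying the scalar du Bois-Reymond lemma through functionals $y'\in B_1'$. Since $\int_a^t=\int_{[a,t)}\,d\mu_g$, the pair $(u,\partial_tu)$ satisfies Definition~\ref{def:Stieltjes-evolution-graph}. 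By construction, $\Theta$ is an isometry for the graph norms \eqref{eq:graph-norm}.

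Next I check that the embedding factors through $\Theta$. The embedding $\mathcal W^{p_0,p_1}\hookrightarrow L^{p_0}(a,b;B)$ is $u\mapsto i_{0B}u$, and this equals $\pi_0\circ\Theta$ once $L^{p_0}_g([a,b);B)$ is identified with $L^{p_0}(a,b;B)$. Theorem~\ref{thm:AubinLions} applies because all of its hypotheses are assumed: $B_0,B_1$ reflexive, $B_0\Subset B\hookrightarrow B_1$, and $1<p_0,p_1<\infty$. So $\pi_0$ is compact. Composing a compact operator with the bounded operator $\Theta$ on the right gives a compact operator, which proves the corollary. Surjectivity of $\Theta$ is not needed for compactness, though it holds by Theorem~\ref{t2.9}: the representative is absolutely continuous with classical a.e.\ derivative $z$, and that equals the distributional derivative.

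The only step that needs real care is the identification of the distributional derivative with the integral representation in $B_1$. The delicate point is that the integrals are Bochner integrals, not merely weak integrals. I would deal with it by mollification, or more simply by scalarization: set $U(t)=\int_a^t\partial_tu$. This $U$ is a Bochner integral, and $i_{01}u-U$ has zero distributional derivative in every scalar direction $\langle y',\cdot\rangle$. So each scalar function $\langle y',i_{01}u-U\rangle$ is a.e.\ constant. Because $i_{01}u-U$ is strongly measurable, it is essentially separably valued, so countably many functionals from a norming set suffice to conclude that it equals a constant $x$ a.e.
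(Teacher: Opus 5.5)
Your proposal is correct and follows the paper's route: set $g(t)=t$, use the Sobolev--Bochner fundamental theorem to place $(u,\partial_t u)$ isometrically in $\mathbb W_g^{p_0,p_1}([a,b];B_0,B_1)$, and conclude from the compactness of $\pi_0$ in Theorem~\ref{thm:AubinLions}. The differences are minor. You note that the isometric embedding $\Theta$ is enough, whereas the paper also proves the reverse inclusion to obtain a full identification of the two spaces. You also prove the vector-valued fundamental theorem directly, by scalarization and essential separability, instead of citing the standard Sobolev--Bochner result.
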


\begin{proof}
	Take $g(t)=t$.  Then $\mu_g$ is Lebesgue measure and the canonical
	representation of a graph pair is
	\begin{displaymath}
	 U(t)=x+\int_a^t z(r)\,dr.
	\end{displaymath}
	For every scalar test function $\varphi\in C_c^\infty(a,b)$, Bochner
	integration by parts gives
	\begin{displaymath}
	 \int_a^b U(t)\varphi'(t)\,dt
	 =-\int_a^b z(t)\varphi(t)\,dt
	 \quad\text{in }B_1.
	\end{displaymath}
	Thus $z$ is the distributional derivative of the state.  Conversely, if
	$u\in L^{p_0}(a,b;B_0)$ has a distributional derivative
	$z\in L^{p_1}(a,b;B_1)$, then both functions belong to
	$L^1(a,b;B_1)$ because the interval is finite.  The standard fundamental
	theorem for Sobolev--Bochner functions provides a representative
	$U(t)=x+\int_a^t z(r)\,dr$; see
	\cite[Sect.~7.1]{Roubicek2013}.  Hence $(u,z)$ belongs to the graph.

	The distributional derivative is unique, so the map
	$(u,z)\mapsto u$ identifies the graph isometrically with
	$\mathcal W^{p_0,p_1}(a,b;B_0,B_1)$ equipped with the graph norm stated
	above.  Applying
	Theorem~\ref{thm:AubinLions} proves the compact inclusion.  We retain the
	reflexivity and exponent assumptions of that theorem; sharper classical
	versions may be found in \cite{Aubin1963,Lions1969,Simon1987}.
\end{proof}

\begin{cor}[A Gelfand-triple form]\label{cor:gelfand-triple}
	Let $V\Subset H\hookrightarrow V'$ be a Gelfand triple, with $V$
	reflexive and $H$ a Hilbert space identified with its dual.  For every
	derivator $g$, the state projection
	\begin{displaymath}
	 \pi_0:\mathbb W_g^{2,2}([a,b];V,V')
	 \longrightarrow L^2_g([a,b);H)
	\end{displaymath}
	is compact.
\end{cor}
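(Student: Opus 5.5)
The plan is to obtain this corollary as a direct specialization of Theorem~\ref{thm:AubinLions} with $B_0=V$, $B=H$, $B_1=V'$ and $p_0=p_1=2$. Three hypotheses have to be checked: reflexivity of $B_0$ and $B_1$, the compact embedding $B_0\Subset B$, and the continuous embedding $B\hookrightarrow B_1$. The exponent condition $1<2<\infty$ is immediate.

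First, $V$ is reflexive by assumption. Its dual $V'$ is then reflexive as well, since the dual of a reflexive Banach space is reflexive. So both outer spaces qualify. The compact embedding $V\Subset H$ is part of the hypothesis.

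The only step needing a short argument is the continuous embedding $H\hookrightarrow V'$ that comes with the Gelfand-triple structure. I would make it explicit in the standard way. Identify $H$ with $H'$ via Riesz, and send $h\in H$ to the functional $v\mapsto(h,v)_H$ on $V$. Its norm is at most $c_{VH}\|h\|_H$, where $c_{VH}$ is the norm of $V\hookrightarrow H$, so the map is bounded.

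Injectivity of $H\to V'$ requires $V$ to be dense in $H$. I would treat density as part of the definition of a Gelfand triple and note this explicitly. Under that convention the map is a continuous embedding, and the composite $V\hookrightarrow H\hookrightarrow V'$ is the map $i_{01}$ used in Definition~\ref{def:Stieltjes-evolution-graph}. This identification is the only place where any care is needed. I do not expect a real obstacle, only the bookkeeping of making sure the graph space $\mathbb W_g^{2,2}([a,b];V,V')$ is formed with this particular embedding.

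With all hypotheses verified, Theorem~\ref{thm:AubinLions} yields compactness of $\pi_0:\mathbb W_g^{2,2}([a,b];V,V')\to L^2_g([a,b);H)$ for every derivator $g$. The degenerate case $\mu_g([a,b))=0$ is already covered inside the theorem.
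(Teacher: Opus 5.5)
Your proposal is correct and matches the paper's proof: the paper also notes that $V'$ is reflexive because $V$ is, and then applies Theorem~\ref{thm:AubinLions} with $(B_0,B,B_1)=(V,H,V')$ and $p_0=p_1=2$. Your remark that injectivity of $H\hookrightarrow V'$ relies on the density of $V$ in $H$, which is part of the Gelfand-triple hypothesis, is a worthwhile clarification, since the Ehrling step (Lemma~\ref{l7.6}) needs that embedding to be injective.
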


\begin{proof}
	The dual $V'$ is reflexive because $V$ is reflexive.  The assertion is
	Theorem~\ref{thm:AubinLions} with $(B_0,B,B_1)=(V,H,V')$ and
	$p_0=p_1=2$.
\end{proof}

\subsection{Time scales as Stieltjes clocks}\label{subsec:time-scales}

The relation between time-scale calculus and Stieltjes-type calculi has
already been established in the literature.  Slav\'\i k introduced the
canonical ceiling map of a time scale in order to embed dynamic equations
into the framework of generalized ordinary differential equations
\cite[Lemma~4]{Slavik2012}.  L\'opez Pouso and Rodr\'\i guez subsequently
showed that, after extending a function from the time scale by composition
with this map, Hilger differentiability is equivalent to Stieltjes
differentiability at points of the time scale
\cite[Theorem~3.1]{POUSO2015}.  Measure-theoretic representations of the
delta integral were developed in \cite{Guseinov2003,CabadaVivero2006}.

The purpose of this subsection is therefore not to claim the time-scale--
Stieltjes correspondence as new.  We formulate it in the present
Lebesgue--Stieltjes and Bochner-valued notation, identify the corresponding
evolution graphs, and use Theorem~\ref{thm:AubinLions} to derive a compactness
result on an arbitrary bounded time scale.  Let
$\mathbb T\subset\mathbb R$ be a nonempty closed set and let
$a,b\in\mathbb T$ with $a<b$.  We use the notation
\begin{displaymath}
	[a,b)_{\mathbb T}:=[a,b)\cap\mathbb T.
\end{displaymath}
For $t\in[a,b)_{\mathbb T}$, let
\begin{displaymath}
	\sigma_{\mathbb T}(t)
	:=\inf\{s\in\mathbb T:s>t\}
\end{displaymath}
be the forward jump operator and let
\begin{displaymath}
	\gamma_{\mathbb T}(t)
	:=\sigma_{\mathbb T}(t)-t
\end{displaymath}
be the graininess.  Since $b\in\mathbb T$, the set in the definition of
$\sigma_{\mathbb T}(t)$ is nonempty for every $t<b$.

\begin{pro}[A time scale as a Stieltjes clock]
	\label{prop:time-scale-clock}
	Define
	\begin{equation}
		\label{eq:time-scale-ceiling-clock}
		g_{\mathbb T}(r)
		:=\min\bigl(\mathbb T\cap[r,b]\bigr),
		\qquad r\in[a,b],
	\end{equation}
	and extend $g_{\mathbb T}$ constantly to the two components of
	$\mathbb R\setminus[a,b]$.  Then $g_{\mathbb T}$ is nondecreasing and
	left-continuous.  Moreover,
	\begin{displaymath}
		g_{\mathbb T}(t)=t
	\end{displaymath}
	for every $t\in[a,b]\cap\mathbb T$, and, at every right-scattered point
	$t\in[a,b)_{\mathbb T}$,
	\begin{equation}
		\label{eq:time-scale-graininess-jump}
		\Delta^+g_{\mathbb T}(t)
		=\sigma_{\mathbb T}(t)-t
		=\gamma_{\mathbb T}(t).
	\end{equation}
	
	Let $\mu_{\Delta}$ denote the Hilger measure on $[a,b)_{\mathbb T}$.  Then
	$\mu_{g_{\mathbb T}}$ is concentrated on $[a,b)_{\mathbb T}$ and
	\begin{equation}
		\label{eq:Hilger-Stieltjes-identification}
		\mu_{g_{\mathbb T}}(E)=\mu_{\Delta}(E)
	\end{equation}
	for every Borel set $E\subset[a,b)_{\mathbb T}$.  Consequently, for every
	Banach space $X$ and every Bochner integrable function
	$f:[a,b)_{\mathbb T}\to X$,
	\begin{equation}
		\label{eq:delta-Stieltjes-integral-identification}
		\int_{[a,b)_{\mathbb T}} f(t)\,\Delta t
		=
		\int_{[a,b)} f^{0}(t)\,\mathrm d\mu_{g_{\mathbb T}}(t),
	\end{equation}
	where $f^{0}$ denotes the extension of $f$ by zero outside the time
scale.
\end{pro}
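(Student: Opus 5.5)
The proof splits into four parts: the pointwise properties of $g_{\mathbb T}$, its jumps, the identification of the two measures, and the integral identity.

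\emph{Monotonicity, left-continuity and the identity on $\mathbb T$.} First I will note that $\mathbb T\cap[r,b]$ is compact and nonempty, because $b\in\mathbb T$. Hence the minimum exists. As $r$ increases the set shrinks, so $g_{\mathbb T}$ is nondecreasing. If $t\in\mathbb T$, then $t$ is the minimum, so $g_{\mathbb T}(t)=t$.

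For left-continuity at $r$, take $r_n\uparrow r$ and put $m_n=g_{\mathbb T}(r_n)\leq g_{\mathbb T}(r)$. Each $m_n$ lies in $\mathbb T$ and satisfies $m_n\ge r_n$. By monotonicity $m_n$ increases to some limit $m$. Closedness of $\mathbb T$ gives $m\in\mathbb T$, and $m\geq r$. Therefore $m\geq g_{\mathbb T}(r)$, which forces equality. The constant extensions outside $[a,b]$ preserve both properties, since $g_{\mathbb T}(a)=a$ and $g_{\mathbb T}(b)=b$.

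\emph{Jumps at right-scattered points.} Let $t$ be right-scattered. For $r\in(t,\sigma_{\mathbb T}(t)]$ we have $g_{\mathbb T}(r)=\sigma_{\mathbb T}(t)$, because $\mathbb T\cap(t,\sigma_{\mathbb T}(t))=\varnothing$ and $\sigma_{\mathbb T}(t)\in\mathbb T$ by closedness. Consequently $g_{\mathbb T}(t^+)=\sigma_{\mathbb T}(t)$, and this gives \eqref{eq:time-scale-graininess-jump}.

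\emph{Identification of the measures.} The complement $[a,b)\setminus\mathbb T$ is relatively open, so it is a countable union of gaps $(\alpha,\beta)$ with $\alpha,\beta\in\mathbb T$. On each gap $g_{\mathbb T}\equiv\beta$, so the open gap carries no $\mu_{g_{\mathbb T}}$-mass. Hence $\mu_{g_{\mathbb T}}$ is concentrated on $[a,b)_{\mathbb T}$, and every $\alpha$ with such a gap is an atom of mass $\beta-\alpha$.

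For the Hilger measure I will use the standard description from \cite{Guseinov2003,CabadaVivero2006}. It is generated by $\mu_\Delta([c,d)_{\mathbb T})=d-c$ for $c<d$ in $\mathbb T$, equivalently $\mu_\Delta=\mathcal L^1|_{\mathbb T}+\sum\gamma_{\mathbb T}(t)\delta_t$. On the other side,
\[
\mu_{g_{\mathbb T}}([c,d))=g_{\mathbb T}(d)-g_{\mathbb T}(c)=d-c .
\]
So the two measures agree on the intervals $[c,d)\cap\mathbb T$ with $c,d\in\mathbb T$.

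To pass to all Borel sets, I will show that these trace intervals, together with the empty set, generate the Borel sets of $[a,b)_{\mathbb T}$ and form a $\pi$-system. An interval with arbitrary endpoints can be reduced to one with endpoints in $\mathbb T$ by applying $g_{\mathbb T}$: for arbitrary $c<d$ in $[a,b]$,
\[
[c,d)\cap\mathbb T=[g_{\mathbb T}(c),g_{\mathbb T}(d))\cap\mathbb T .
\]
Both measures are finite, with total mass $b-a$. The uniqueness theorem for finite measures on a generating $\pi$-system, the same Dynkin argument used in Proposition~\ref{prop:BV-vector-measure}, then gives \eqref{eq:Hilger-Stieltjes-identification}.

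\emph{The integral identity.} Once the measures coincide, \eqref{eq:delta-Stieltjes-integral-identification} follows by checking it on simple functions, using that $\mu_{g_{\mathbb T}}$ vanishes off $\mathbb T$. It then extends to all Bochner integrable $f$ by $L^1$-approximation, with the norm estimate for the Bochner integral controlling the error.

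\emph{Main obstacle.} The hardest step is matching the paper's notion of the Hilger measure, whose precise definition the paper does not give. It must be shown that the Hilger measure agrees with Lebesgue measure on the right-dense part and with graininess atoms on the scattered part. The generating-class argument is where I expect the care to be needed.
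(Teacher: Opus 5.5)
Your proposal is correct and follows essentially the same route as the paper: left-continuity via closedness of $\mathbb T$, the computation $g_{\mathbb T}(t^+)=\sigma_{\mathbb T}(t)$, nullity of the open gaps, and the uniqueness theorem for two finite measures agreeing on the trace intervals $[c,d)\cap\mathbb T$ with $c,d\in\mathbb T$, which are exactly the sets on which the paper characterizes the Hilger measure. Your identity $[c,d)\cap\mathbb T=[g_{\mathbb T}(c),g_{\mathbb T}(d))\cap\mathbb T$ justifies the generation claim that the paper only asserts, and since only this interval characterization of $\mu_\Delta$ is used, the Lebesgue-plus-graininess decomposition you flag as the main obstacle is not actually needed.
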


\begin{proof}
	For each $r\in[a,b]$, the set $\mathbb T\cap[r,b]$ is nonempty and compact,
	so the minimum in \eqref{eq:time-scale-ceiling-clock} is well defined.  The
	map $g_{\mathbb T}$ is plainly nondecreasing.  To prove left continuity, let
	$r_n\uparrow r$.  Monotonicity gives the existence of the limit
	\begin{displaymath}
		\ell:=\lim_{n\to\infty}g_{\mathbb T}(r_n)
		\leq g_{\mathbb T}(r).
	\end{displaymath}
	Since $g_{\mathbb T}(r_n)\in\mathbb T$ and $\mathbb T$ is closed, we have
	$\ell\in\mathbb T$.  Moreover, $g_{\mathbb T}(r_n)\geq r_n$, and hence
	$\ell\geq r$.  Therefore
	\begin{displaymath}
		\ell\in\mathbb T\cap[r,b].
	\end{displaymath}
	By the minimality of $g_{\mathbb T}(r)$,
	\begin{displaymath}
		g_{\mathbb T}(r)\leq\ell.
	\end{displaymath}
	Thus $\ell=g_{\mathbb T}(r)$, proving left continuity.
	
	If $t\in\mathbb T$, then $t$ is the minimum of
	$\mathbb T\cap[t,b]$, and hence $g_{\mathbb T}(t)=t$.  If $t$ is
	right-scattered, then
	\begin{displaymath}
		g_{\mathbb T}(r)=\sigma_{\mathbb T}(t)
	\end{displaymath}
	for every $r\in(t,\sigma_{\mathbb T}(t)]$.  It follows that
	\begin{displaymath}
		g_{\mathbb T}(t^+)=\sigma_{\mathbb T}(t),
	\end{displaymath}
	which proves \eqref{eq:time-scale-graininess-jump}.
	
	The complement $[a,b]\setminus\mathbb T$ is an open subset of $\mathbb R$
	and is therefore a countable union of pairwise disjoint open intervals.  If
	$(\alpha,\beta)$ is one of these intervals, then
	\begin{displaymath}
		g_{\mathbb T}(r)=\beta
	\end{displaymath}
	for every $r\in(\alpha,\beta]$.  Hence the interior of every gap has zero
	$\mu_{g_{\mathbb T}}$-measure.  Therefore
	\begin{displaymath}
		\mu_{g_{\mathbb T}}([a,b)\setminus\mathbb T)=0.
	\end{displaymath}
	
	If $c,d\in\mathbb T$ and $a\leq c<d\leq b$, then
	\begin{displaymath}
		\begin{aligned}
			\mu_{g_{\mathbb T}}([c,d))
			&=g_{\mathbb T}(d)-g_{\mathbb T}(c)
			\\
			&=d-c.
		\end{aligned}
	\end{displaymath}
	Since the measure is concentrated on $\mathbb T$, this gives
	\begin{displaymath}
		\mu_{g_{\mathbb T}}([c,d)\cap\mathbb T)=d-c.
	\end{displaymath}
	Let
\begin{displaymath}
 \mathcal S_{\mathbb T}
 :=\{[c,d)\cap\mathbb T:a\leq c<d\leq b,\ c,d\in\mathbb T\}
 \cup\{\varnothing\}.
\end{displaymath}
This is a semiring that generates the relative Borel $\sigma$-algebra of
$[a,b)_{\mathbb T}$. The Hilger measure is characterized by
\begin{displaymath}
 \mu_{\Delta}([c,d)\cap\mathbb T)=d-c
\end{displaymath}
on $\mathcal S_{\mathbb T}$. Hence the restrictions of
$\mu_{g_{\mathbb T}}$ and $\mu_{\Delta}$ are two finite measures that
agree on a generating semiring. The uniqueness theorem for finite measure
extensions therefore yields
\eqref{eq:Hilger-Stieltjes-identification}; see
\cite{Guseinov2003,CabadaVivero2006}.
The integral identity
	\eqref{eq:delta-Stieltjes-integral-identification} follows from the equality
	of the measures and from the fact that the values of $f^{0}$ outside
	$\mathbb T$ are irrelevant.
\end{proof}

\begin{rem}[Relation with the existing time-scale correspondence]
\label{rem:known-time-scale-Stieltjes-correspondence}
The clock in \eqref{eq:time-scale-ceiling-clock} is the restriction to
$[a,b]$ of the ceiling map used by Slav\'\i k in the generalized-ODE
representation of dynamic equations on time scales
\cite[Lemma~4]{Slavik2012}.  If $f:\mathbb T\to X$ and
\begin{displaymath}
 f^{\sharp}(r):=f(g_{\mathbb T}(r)),
\end{displaymath}
then, in the scalar case, the equivalence between the Hilger derivative of
$f$ at a point of $\mathbb T$ and the Stieltjes derivative of
$f^{\sharp}$ with respect to $g_{\mathbb T}$ is the content of
\cite[Theorem~3.1]{POUSO2015}, under the natural left-continuity assumption
at right-scattered points.  This also explains the two local forms of the
derivative: at a right-scattered point one obtains the quotient
\begin{displaymath}
 \frac{f(\sigma_{\mathbb T}(t))-f(t)}
 {\sigma_{\mathbb T}(t)-t},
\end{displaymath}
whereas at a right-dense point one obtains the usual limit along the time
scale.

Proposition~\ref{prop:time-scale-clock} records the corresponding
measure-theoretic statement in the notation needed here.  In particular,
it identifies the Hilger measure with $\mu_{g_{\mathbb T}}$ and extends the
delta-integral identity to Bochner integrable functions.  The additional
step in Corollary~\ref{cor:Aubin-Lions-time-scale} is functional analytic:
the known scalar correspondence is lifted to the Stieltjes--Bochner
evolution graph, after which Theorem~\ref{thm:AubinLions} yields strong
compactness in the intermediate Banach space.
\end{rem}

For a Banach space $X$ and $1\leq p<\infty$, we write
\begin{displaymath}
	L_{\Delta}^p([a,b)_{\mathbb T};X)
	:=L^p([a,b)_{\mathbb T},\mu_{\Delta};X).
\end{displaymath}
Define the delta evolution graph
\begin{displaymath}
	\mathbb W_{\Delta}^{p_0,p_1}
	([a,b]_{\mathbb T};B_0,B_1)
\end{displaymath}
to be the set of pairs
\begin{displaymath}
	(u,z)\in
	L_{\Delta}^{p_0}([a,b)_{\mathbb T};B_0)
	\times
	L_{\Delta}^{p_1}([a,b)_{\mathbb T};B_1)
\end{displaymath}
for which there exist $x\in B_1$ and a representative
$U:[a,b]\cap\mathbb T\to B_1$ satisfying
\begin{displaymath}
 U(t)=i_{01}u(t)
 \quad\text{for $\mu_\Delta$-almost every }t\in[a,b)_{\mathbb T}
\end{displaymath}
and
\begin{equation}
	\label{eq:time-scale-evolution-graph}
	U(t)
	=x+
	\int_{[a,t)_{\mathbb T}}z(s)\,\Delta s,
	\qquad t\in[a,b]\cap\mathbb T.
\end{equation}
We endow this graph with the norm
\begin{displaymath}
	\|(u,z)\|_{\mathbb W_{\Delta}^{p_0,p_1}}
	:=
	\|u\|_{L_{\Delta}^{p_0}(B_0)}
	+
	\|z\|_{L_{\Delta}^{p_1}(B_1)}.
\end{displaymath}

\begin{cor}[Aubin--Lions compactness on a bounded time scale]
	\label{cor:Aubin-Lions-time-scale}
	Let $\mathbb T\subset\mathbb R$ be a time scale and let
	$a,b\in\mathbb T$ with $a<b$.  Suppose that $B_0$ and $B_1$ are reflexive
	Banach spaces,
	\begin{displaymath}
		B_0\Subset B\hookrightarrow B_1,
	\end{displaymath}
	and
	\begin{displaymath}
		1<p_0,p_1<\infty.
	\end{displaymath}
	Then the state projection
	\begin{displaymath}
		\pi_0:
		\mathbb W_{\Delta}^{p_0,p_1}
		([a,b]_{\mathbb T};B_0,B_1)
		\longrightarrow
		L_{\Delta}^{p_0}([a,b)_{\mathbb T};B),
		\qquad
		\pi_0(u,z)=u,
	\end{displaymath}
	is compact.
\end{cor}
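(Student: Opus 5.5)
The plan is to reduce Corollary~\ref{cor:Aubin-Lions-time-scale} to Theorem~\ref{thm:AubinLions} with the clock $g=g_{\mathbb T}$ of Proposition~\ref{prop:time-scale-clock}. Concretely, I will construct isometric linear isomorphisms
\begin{displaymath}
 \Theta:\mathbb W_{\Delta}^{p_0,p_1}([a,b]_{\mathbb T};B_0,B_1)\longrightarrow\mathbb W_{g_{\mathbb T}}^{p_0,p_1}([a,b];B_0,B_1),
 \qquad
 \theta:L_{\Delta}^{p_0}([a,b)_{\mathbb T};B)\longrightarrow L_{g_{\mathbb T}}^{p_0}([a,b);B).
\end{displaymath}
These will be arranged so that $\theta\circ\pi_0^{\Delta}=\pi_0^{g}\circ\Theta$. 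Compactness of $\pi_0^{\Delta}=\theta^{-1}\circ\pi_0^g\circ\Theta$ then follows, because a compact operator composed with bounded operators is compact.

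\emph{The Bochner spaces.} By Proposition~\ref{prop:time-scale-clock}, $\mu_{g_{\mathbb T}}$ is concentrated on $[a,b)_{\mathbb T}$ and agrees there with $\mu_\Delta$ on Borel sets, hence also on the completions. For any Banach space $X$, define $\theta f=f^0$, the extension by zero. Extension by zero preserves strong measurability, and restriction to $[a,b)_{\mathbb T}$ preserves it as well. Since the complement of $\mathbb T$ in $[a,b)$ is $\mu_{g_{\mathbb T}}$-null, restriction is a two-sided inverse on equivalence classes, and
\begin{displaymath}
 \|f^0\|_{L^p_{g_{\mathbb T}}(X)}=\|f\|_{L^p_\Delta(X)}
\end{displaymath}
holds by \eqref{eq:Hilger-Stieltjes-identification}. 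Apply this with $X=B_0,B_1,B$ and define $\Theta(u,z)=(u^0,z^0)$. The norms match because both graph norms are sums of the factor norms.

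\emph{Membership in the graphs (main step).} Suppose $(u,z)$ lies in the delta graph, with $x$ and $U$ as in \eqref{eq:time-scale-evolution-graph}. By \eqref{eq:delta-Stieltjes-integral-identification} applied to $\mathbf 1_{[a,t)}z$, we have
\begin{displaymath}
 x+\int_{[a,t)}z^0\,d\mu_{g_{\mathbb T}}=U(t)=i_{01}u(t)=i_{01}u^0(t)
\end{displaymath}
for $\mu_\Delta$-almost every $t\in[a,b)_{\mathbb T}$, and hence for $\mu_{g_{\mathbb T}}$-almost every $t\in[a,b)$. So $\Theta(u,z)$ satisfies Definition~\ref{def:Stieltjes-evolution-graph}.

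Conversely, let $(u^0,z^0)$ be in the Stieltjes graph with element $x$. Set $U(t):=x+\int_{[a,t)_{\mathbb T}}z\,\Delta s$ for $t\in[a,b]\cap\mathbb T$. Then $U$ coincides with the canonical representative $\widetilde u$ on $\mathbb T$, and $\widetilde u=i_{01}u^0$ holds $\mu_{g_{\mathbb T}}$-a.e., hence $\mu_\Delta$-a.e. on $[a,b)_{\mathbb T}$. The one point to watch is that the delta graph asks for $U$ on all of $[a,b]\cap\mathbb T$, including $t=b$. This is harmless, since the definition only fixes $U$ by the integral formula and compares it with $u$ almost everywhere on $[a,b)_{\mathbb T}$. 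Right-scattered points carry positive mass in both measures, so pointwise equality there is automatic. Thus $\Theta$ is onto, and the intertwining relation $\theta\circ\pi_0^\Delta=\pi_0^g\circ\Theta$ is immediate from the definitions.

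\emph{Conclusion.} Apply Theorem~\ref{thm:AubinLions} with the derivator $g_{\mathbb T}$, which is extended constantly outside $[a,b]$ and is nondecreasing and left-continuous by Proposition~\ref{prop:time-scale-clock}. The hypotheses on $B_0$, $B$, $B_1$, $p_0$ and $p_1$ are exactly those assumed in the corollary, so $\pi_0^g$ is compact and therefore so is $\pi_0^{\Delta}$. The degenerate case $M=0$ cannot occur, since $\mu_{g_{\mathbb T}}([a,b))=b-a>0$.

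I expect the main obstacle to be the careful measure-theoretic matching of completed null sets and of the integrals over $[a,t)$ versus $[a,t)_{\mathbb T}$ in the graph-membership step. Once that is done, the rest is formal.
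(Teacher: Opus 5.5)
Your proposal is correct and follows essentially the paper's route. Extension by zero gives isometric identifications of the Bochner factors and of the two evolution graphs, and compactness is then transferred from Theorem~\ref{thm:AubinLions} for the clock $g_{\mathbb T}$. The only difference is cosmetic: the paper lifts the representative pointwise via $\widehat U=U\circ g_{\mathbb T}$ on all of $[a,b]$, whereas you check membership directly through the almost-everywhere condition in Definition~\ref{def:Stieltjes-evolution-graph}, which is slightly more economical.
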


\begin{proof}
	Let $g_{\mathbb T}$ be the clock defined in
	\eqref{eq:time-scale-ceiling-clock}.  For every Banach space $X$, extension by zero from
	$[a,b)_{\mathbb T}$ to $[a,b)$ gives an isometric
	identification
	\begin{displaymath}
		L_{\Delta}^p([a,b)_{\mathbb T};X)
		\cong
		L_{g_{\mathbb T}}^p([a,b);X),
	\end{displaymath}
	because of \eqref{eq:Hilger-Stieltjes-identification} and because
	$\mu_{g_{\mathbb T}}$ is concentrated on the time scale.
	
	Let $(u,z)$ belong to the delta evolution graph and let $U$ be the
	representative in \eqref{eq:time-scale-evolution-graph}.  Define
	\begin{displaymath}
		\widehat U(r):=U(g_{\mathbb T}(r)),
		\qquad r\in[a,b].
	\end{displaymath}
	Since $g_{\mathbb T}(t)=t$ on $\mathbb T$, the restriction of
	$\widehat U$ to the time scale is $U$.  Moreover, by
\eqref{eq:delta-Stieltjes-integral-identification},
\begin{displaymath}
 \widehat U(r)
 =x+
 \int_{[a,r)}z^{0}(s)\,
 \mathrm d\mu_{g_{\mathbb T}}(s),
 \qquad r\in[a,b],
\end{displaymath}
where $z^{0}$ is the extension of $z$ by zero outside the time scale.
Thus extension by zero maps the delta graph isometrically into the
Stieltjes evolution graph associated with $g_{\mathbb T}$.

Conversely, let $(v,w)$ belong to that Stieltjes evolution graph and let
$V:[a,b]\to B_1$ be its canonical representative. Because
$\mu_{g_{\mathbb T}}$ is concentrated on the time scale, restriction gives
well-defined classes
\begin{displaymath}
 u:=v|_{[a,b)_{\mathbb T}},
 \qquad
 z:=w|_{[a,b)_{\mathbb T}}
\end{displaymath}
in the corresponding delta Bochner spaces, without changing either norm.
For $t\in[a,b]\cap\mathbb T$, the integral identity in
Proposition~\ref{prop:time-scale-clock} gives
\begin{displaymath}
 V(t)
 =x+
 \int_{[a,t)}w(s)\,\mathrm d\mu_{g_{\mathbb T}}(s)
 =x+
 \int_{[a,t)_{\mathbb T}}z(s)\,\Delta s.
\end{displaymath}
Hence $(u,z)$ belongs to the delta evolution graph, with representative
$V|_{[a,b]\cap\mathbb T}$.  At the level of the corresponding Bochner
quotient spaces, extension and restriction induce mutually inverse isometries
on both graph components: their compositions are the identity on equivalence
classes, rather than a claim of pointwise identity for arbitrary representatives
off the support of the measure.  The two evolution graphs are therefore
isometrically identified as spaces of equivalence classes, and compactness of
the state projection follows from Theorem~\ref{thm:AubinLions}.
\end{proof}

\begin{rem}[Continuous, discrete and hybrid parts of a time scale]
	\label{rem:time-scale-decomposition}
	The preceding corollary contains the standard continuous and discrete cases.
	If $\mathbb T=[a,b]$, then $g_{\mathbb T}(t)=t$ and
	Corollary~\ref{cor:Aubin-Lions-time-scale} reduces to
	Corollary~\ref{cor:classical-aubin-lions}.  If every point of
	$[a,b)_{\mathbb T}$ is right-scattered, the Hilger measure is purely atomic
	and
	\begin{displaymath}
		\mu_{\Delta}(\{t\})=\gamma_{\mathbb T}(t).
	\end{displaymath}
	For a general time scale, the delta integral can be written as
	\begin{equation}
		\label{eq:time-scale-integral-decomposition}
		\int_{[a,b)_{\mathbb T}}\varphi(t)\,\Delta t
		=
		\int_{[a,b)\cap\mathbb T}\varphi(t)\,\mathrm dt
		+
		\sum_{\substack{t\in[a,b)_{\mathbb T}\\
				\sigma_{\mathbb T}(t)>t}}
		\gamma_{\mathbb T}(t)\varphi(t)
	\end{equation}
	for every nonnegative Borel function $\varphi$; the set of
	right-scattered points occurring in the sum is at most countable.  See
	\cite{Guseinov2003,CabadaVivero2006}.  Thus the same compactness statement controls both the diffuse
	Lebesgue contribution carried by the time scale and the atomic contribution
	generated by its right-scattered points.  No decomposition of the evolution
	argument into separate continuous and discrete proofs is required.
	
	For the canonical representative in
	\eqref{eq:time-scale-evolution-graph}, the derivative component agrees
	$\mu_{\Delta}$-almost everywhere with the strong delta derivative.  At a
	right-scattered point this identity reads
	\begin{displaymath}
		U^{\Delta}(t)
		=
		\frac{U(\sigma_{\mathbb T}(t))-U(t)}
		{\gamma_{\mathbb T}(t)}
		=z(t),
	\end{displaymath}
	whereas at a right-dense point it is the norm limit along the time scale.
	Consequently, Corollary~\ref{cor:Aubin-Lions-time-scale} is a genuine
	time-scale Aubin--Lions theorem, rather than only a reformulation of the
	purely atomic example below.
\end{rem}

\subsection{Atomic and mixed clocks}\label{subsec:model-clocks}

The next result is the exact graph-space counterpart of a purely atomic clock.  It is a compactness theorem for weighted sequences, not an asymptotic
statement about a family of time discretizations.  The weights determine both
the state norm and the discrete derivative norm.

\begin{cor}[Weighted discrete compactness]\label{cor:weighted-discrete}
	Let $B_0$ and $B_1$ be reflexive Banach spaces with
	$B_0\Subset B\hookrightarrow B_1$, let $1<p_0,p_1<\infty$, and let
	$\alpha=(\alpha_n)_{n\geq1}$ satisfy
	$\alpha_n>0$ and $\sum_{n=1}^{\infty}\alpha_n<\infty$.  Define
	\begin{displaymath}
		\begin{split}
		\mathcal X_\alpha(B_0,B_1):=\bigg\{x=(x_n)_{n\geq1}:{}&
		\sum_{n=1}^{\infty}\alpha_n\|x_n\|_{B_0}^{p_0}<\infty,\\
		&\sum_{n=1}^{\infty}\alpha_n^{1-p_1}
		\|x_{n+1}-x_n\|_{B_1}^{p_1}<\infty\bigg\}.
		\end{split}
	\end{displaymath}
	Endow this space with the norm
	\begin{displaymath}
		\|x\|_{\mathcal X_\alpha}
		:=\left(\sum_{n=1}^{\infty}\alpha_n
		\|x_n\|_{B_0}^{p_0}\right)^{1/p_0}
		+\left(\sum_{n=1}^{\infty}\alpha_n^{1-p_1}
		\|x_{n+1}-x_n\|_{B_1}^{p_1}\right)^{1/p_1}.
	\end{displaymath}
	Then $\mathcal X_\alpha(B_0,B_1)$ is a Banach space and the inclusion
	\begin{displaymath}
		\mathcal X_\alpha(B_0,B_1)
		\Subset \ell^{p_0}(\alpha;B),
		\qquad
		\|x\|_{\ell^{p_0}(\alpha;B)}^{p_0}
		=\sum_{n=1}^{\infty}\alpha_n\|x_n\|_B^{p_0},
	\end{displaymath}
	is compact.
\end{cor}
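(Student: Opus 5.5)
We realize the weighted sequence space as the evolution graph of a purely atomic clock, and then apply Theorem~\ref{thm:AubinLions} and Proposition~\ref{prop:W-structure}. Take $[a,b]=[0,1]$ and atoms $t_n:=1-1/n$, $n\geq1$. Define
\begin{displaymath}
 g(t):=\sum_{n:\,t_n<t}\alpha_n\quad(t\in[0,1]),
\end{displaymath}
extended constantly outside $[0,1]$. This $g$ is nondecreasing and left-continuous, since the strict inequality $t_n<t$ makes every jump a right jump. Its measure is $\mu_g=\sum_n\alpha_n\delta_{t_n}$ on $[0,1)$, with total mass $M=\sum_n\alpha_n\in(0,\infty)$. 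I will verify directly that $\mu_g([c,d))=g(d)-g(c)$ and that $\mu_g$ carries no mass off $\{t_n\}$. Since every $t_n$ is an atom of positive mass, point evaluation is well defined on Bochner classes (Corollary~\ref{cor:canonical-lift}). Hence $u\mapsto(u(t_n))_n$ is an isometry of $L_g^{p}([0,1);X)$ onto $\ell^p(\alpha;X)$ for every Banach space $X$.

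Next I build a map $\Psi:\mathcal X_\alpha(B_0,B_1)\to\mathbb W_g^{p_0,p_1}([0,1];B_0,B_1)$. Given $x\in\mathcal X_\alpha$, let $u$ be the class with $u(t_n)=x_n$ and $z$ the class with $z(t_n)=(x_{n+1}-x_n)/\alpha_n$. Then
\begin{displaymath}
 \|z\|_{L_g^{p_1}(B_1)}^{p_1}=\sum_n\alpha_n\alpha_n^{-p_1}\|x_{n+1}-x_n\|_{B_1}^{p_1},
\end{displaymath}
which is exactly the second term of the norm. Since $z\in L_g^{p_1}\subset L_g^1$, I take $x=i_{01}x_1$ as the initial value. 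The sum telescopes:
\begin{displaymath}
 x+\int_{[0,t_n)}z\,d\mu_g=i_{01}x_1+\sum_{k<n}(x_{k+1}-x_k)=i_{01}x_n .
\end{displaymath}
So the graph identity holds at every atom, hence $\mu_g$-a.e., and $\Psi$ is a linear isometry.

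For surjectivity, take $(u,z)$ in the graph with canonical representative $\widetilde u$. Because each $\{t_n\}$ has positive mass, $\widetilde u(t_n)=i_{01}u(t_n)$ for every $n$. Setting $x_n=u(t_n)$ gives
\begin{displaymath}
 \alpha_n z(t_n)=\widetilde u(t_{n+1})-\widetilde u(t_n)=i_{01}(x_{n+1}-x_n).
\end{displaymath}
Thus $z$ is determined by the state, $x\in\mathcal X_\alpha$, and $\Psi(x)=(u,z)$. There is no terminal atom, because the atoms accumulate only at $b=1$, so the phenomenon of Example~\ref{ex:terminal-atom-nonuniqueness} does not arise. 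I expect this step to be the only delicate one: it needs every point of mass to be an atom with mass only at the $t_n$, and $\mu_g((t_n,t_{n+1}))=0$.

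It follows that $\Psi$ is an isometric isomorphism. Completeness of $\mathcal X_\alpha$ then follows from Proposition~\ref{prop:W-structure}. Composing $\Psi$ with the state projection $\pi_0$ and the isometry $L_g^{p_0}([0,1);B)\cong\ell^{p_0}(\alpha;B)$ recovers exactly the inclusion $\mathcal X_\alpha\hookrightarrow\ell^{p_0}(\alpha;B)$. That inclusion is therefore compact by Theorem~\ref{thm:AubinLions}.
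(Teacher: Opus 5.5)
Your proposal is correct and follows essentially the same route as the paper. Both realize $\mathcal X_\alpha(B_0,B_1)$ isometrically as the evolution graph of the purely atomic clock $\mu_g=\sum_n\alpha_n\delta_{t_n}$, whose atoms accumulate only at $b$. Both recover $z(t_n)=i_{01}(x_{n+1}-x_n)/\alpha_n$ from the canonical representative to get surjectivity, and then invoke Proposition~\ref{prop:W-structure} for completeness and Theorem~\ref{thm:AubinLions} for compactness. The differences are cosmetic. You place the first atom at $a$, which is harmless since it still carries mass $\alpha_1$ in $[a,b)$. You also use only the almost-everywhere graph identity, so you skip the paper's $B_1$-Cauchy construction of the terminal value at $b$; this is legitimate because $x+\int_{[a,t)}z\,d\mu_g$ is automatically defined on all of $[a,b]$.
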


\begin{proof}
	Choose $a<s_1<s_2<\cdots<b$ with $s_n\uparrow b$ and define the
	left-continuous derivator
	\begin{displaymath}
		g(t)=\sum_{n=1}^{\infty}\alpha_n
		\mathbf 1_{(s_n,\infty)}(t).
	\end{displaymath}
	Thus $\mu_g=\sum_{n\geq1}\alpha_n\delta_{s_n}$ on $[a,b)$.
	Given $x\in\mathcal X_\alpha(B_0,B_1)$, set
	\begin{displaymath}
		u_x(t)=x_1\quad(a\leq t\leq s_1),
		\qquad
		u_x(t)=x_{n+1}\quad(s_n<t\leq s_{n+1}).
	\end{displaymath}
	The terminal value at $b$ is well defined in $B_1$.  Indeed, Hölder's
	inequality yields
	\begin{displaymath}
		\sum_{n=1}^{\infty}\|x_{n+1}-x_n\|_{B_1}
		\leq
		\left(\sum_{n=1}^{\infty}\alpha_n\right)^{1-1/p_1}
		\left(\sum_{n=1}^{\infty}\alpha_n^{1-p_1}
		\|x_{n+1}-x_n\|_{B_1}^{p_1}\right)^{1/p_1},
	\end{displaymath}
	so $(x_n)$ is Cauchy in $B_1$.  We complete the definition above by setting
	$u_x(b)=\lim_nx_n$ in $B_1$.  At the atom $s_n$ set
	\begin{displaymath}
		z_x(s_n)=\frac{x_{n+1}-x_n}{\alpha_n}.
	\end{displaymath}
	Set $z_x=0$ off the atoms.  The identity
	\begin{displaymath}
	 u_x(t)=x_1+\int_{[a,t)}z_x(r)\,d\mu_g(r),
	 \qquad t\in[a,b],
	\end{displaymath}
	follows by telescoping the jumps, with convergence in $B_1$ at $t=b$.
	Hence $(u_x,z_x)$ belongs to the evolution graph.  Direct calculation gives
	\begin{displaymath}
		\|u_x\|_{L^{p_0}_g(B_0)}^{p_0}
		=\sum_{n=1}^{\infty}\alpha_n\|x_n\|_{B_0}^{p_0},
		\qquad
		\|z_x\|_{L^{p_1}_g(B_1)}^{p_1}
		=\sum_{n=1}^{\infty}\alpha_n^{1-p_1}
		\|x_{n+1}-x_n\|_{B_1}^{p_1}.
	\end{displaymath}
	Conversely, let $(u,z)$ belong to the graph for this clock, and let $U$ be
its canonical representative. Theorem~\ref{t2.9} gives
$U\in AC_g([a,b];B_1)$. Since $g$ is constant between consecutive atoms,
Lemma~\ref{lem:ACg-Cg} shows that $U$ is constant there. Each singleton
$\{s_n\}$ has positive $\mu_g$-measure, so the value $u(s_n)\in B_0$ is
well defined for the Bochner class. Moreover,
\begin{displaymath}
 U(s_n)=i_{01}u(s_n),
\end{displaymath}
because $U=i_{01}u$ almost everywhere and equality must hold at every
positive-mass atom. Define
\begin{displaymath}
 x_n:=u(s_n)\in B_0.
\end{displaymath}
The atomic integral representation gives, in $B_1$,
\begin{displaymath}
 U(s_n^+)-U(s_n)=\alpha_nz(s_n),
 \qquad U(s_n^+)=U(s_{n+1})=i_{01}x_{n+1}.
\end{displaymath}
Consequently,
\begin{displaymath}
 z(s_n)=\frac{i_{01}(x_{n+1}-x_n)}{\alpha_n}.
\end{displaymath}
Identifying $B_0$ with its image in $B_1$, this is precisely the discrete
derivative used in the definition of $\mathcal X_\alpha(B_0,B_1)$. The
norm identities above now show that
$x\in\mathcal X_\alpha(B_0,B_1)$.
The map
	$x\mapsto(u_x,z_x)$ is therefore a linear bijection from
	$\mathcal X_\alpha(B_0,B_1)$ onto the graph associated with this atomic
	clock, and the two norm identities show that it is an isometry for the
	sum norms used here.  Proposition~\ref{prop:W-structure} proves that
	$\mathcal X_\alpha$ is complete.  Finally,
	\begin{displaymath}
	 \|u_x\|_{L_g^{p_0}(B)}^{p_0}
	 =\sum_{n=1}^\infty\alpha_n\|x_n\|_B^{p_0}.
	\end{displaymath}
	The compactness of the inclusion follows from
	Theorem~\ref{thm:AubinLions}.
\end{proof}

\begin{rem}[Finite atomic clocks and the last increment]
\label{rem:finite-atomic-clock}
If the preceding clock is replaced by
\begin{displaymath}
 \mu_g=\sum_{n=1}^N\alpha_n\delta_{s_n},
 \qquad s_1<\cdots<s_N<b,
\end{displaymath}
then $s_N$ is a terminal atom.  The state class records
$x_1,\ldots,x_N$, whereas the value
\begin{displaymath}
 x_{N+1}=x_N+\alpha_Nz(s_N)
\end{displaymath}
lies to the right of the last atom and is invisible to the
$L_g^{p_0}$ norm.  Accordingly, the finite analogue of
$\mathcal X_\alpha$ must either retain the last discrete derivative as a
separate variable or prescribe $x_{N+1}$ as endpoint data.  The graph
formulation does the former automatically.  The compactness theorem still
applies to the observed state $(x_1,\ldots,x_N)$; it does not falsely identify
the unobserved last increment from that state alone.
\end{rem}

\begin{exa}[A continuous--singular--atomic clock]\label{exa:mixed-clock}
	Let $C:[0,1]\to[0,1]$ be the Cantor staircase, extended constantly to
	$\mathbb R\setminus[0,1]$, let $\kappa\geq0$, and
	let $\{s_n\}\subset(0,1)$ be pairwise distinct with
	$\alpha_n>0$ and $\sum_n\alpha_n<\infty$.  The jump series converges
	uniformly, so the function
	\begin{displaymath}
		g(t)=t+\kappa C(t)+
		\sum_{n=1}^{\infty}\alpha_n\mathbf 1_{(s_n,\infty)}(t)
	\end{displaymath}
	is nondecreasing and left-continuous.  Its measure decomposition is
	\begin{displaymath}
		\mu_g=\mathcal L^1+\kappa\mu_C+
		\sum_{n=1}^{\infty}\alpha_n\delta_{s_n},
	\end{displaymath}
	where $\mu_C$ is the singular continuous Cantor measure.  Hence, for
	every nonnegative Borel function $\varphi$,
	\begin{displaymath}
		\int_{[0,1)}\varphi\,d\mu_g
		=\int_0^1\varphi(t)\,dt
		 +\kappa\int_{[0,1)}\varphi\,d\mu_C
		 +\sum_{n=1}^{\infty}\alpha_n\varphi(s_n).
	\end{displaymath}
	Theorem~\ref{thm:AubinLions} applies to the corresponding evolution graph.
	It gives strong compactness of the state when the derivative is controlled
	with respect to ordinary time, the Cantor measure and the atomic masses.
	The three components are handled in the same extraction.
\end{exa}

\begin{rem}[Plateaus of the clock]
	If $g$ is constant on an interval, every canonical $g$-absolutely
	continuous representative is constant there. The interiors of such
	intervals carry no $\mu_g$-mass. A right jump at an endpoint is an atomic
	contribution and must still be included whenever that endpoint belongs
	to $[a,b)$. Zero-mass plateau interiors create no obstruction to compactness.
	Thus Theorem~\ref{thm:AubinLions} does not require strict monotonicity
	of the clock.
\end{rem}

\subsection{Endpoint limitations and possible extensions}
\label{subsec:endpoint-mechanisms}

The following discussion concerns the mechanisms of the original proof of
Theorem~\ref{thm:AubinLions}. The derivative exponent controls its temporal
modulus, whereas the state exponent controls its weak compactness step.
The Dunford--Pettis type replacements below address those proof steps;
no necessity claim for the compactness conclusion is intended, and no
endpoint theorem is asserted here.

The restrictions on $p_0$ and $p_1$ enter the proof of
	Theorem~\ref{thm:AubinLions} at two different points. The condition
	$p_1>1$ provides a uniform modulus of continuity for the canonical
	representatives, whereas $p_0>1$, together with the reflexivity of $B_0$,
	provides weak compactness of the state component in its Bochner space.
	
	We first discuss the derivative exponent. If
	\begin{displaymath}
		1<p_1<\infty,
	\end{displaymath}
	H\"older's inequality gives
	\begin{displaymath}
		\int_{[r,t)}\|q_n(s)\|_{B_1}\,\mathrm{d}\mu_g(s)
		\leq
		\|q_n\|_{L_g^{p_1}(B_1)}
		\mu_g([r,t))^{1-1/p_1}.
	\end{displaymath}
	Since
	\begin{displaymath}
		\alpha:=1-\frac{1}{p_1}>0,
	\end{displaymath}
	the right-hand side tends to zero uniformly in $n$ whenever the local
	clock mass $\mu_g([r,t))$ tends to zero. This is precisely the mechanism
	used in \eqref{eq:time-control-Aubin-Lions}.
	
	At the endpoint $p_1=1$, the same computation yields only
	\begin{displaymath}
		\|W_n(t)-W_n(r)\|_{B_1}
		\leq
		\int_{[r,t)}\|q_n(s)\|_{B_1}\,\mathrm{d}\mu_g(s),
	\end{displaymath}
	and a uniform bound of the form
	\begin{displaymath}
		\sup_n\|q_n\|_{L_g^1(B_1)}<\infty
	\end{displaymath}
	does not force the last integral to be small on sets of small
	$\mu_g$-measure. This failure already occurs for ordinary Lebesgue time.
	For example, on $(0,1)$ the scalar functions
	\begin{displaymath}
		h_n(s):=n\,\mathbf{1}_{(0,1/n)}(s)
	\end{displaymath}
	satisfy
	\begin{displaymath}
		\|h_n\|_{L^1(0,1)}=1,
	\end{displaymath}
	but
	\begin{displaymath}
		\int_{(0,1/n)}|h_n(s)|\,\mathrm{d}s=1,
	\end{displaymath}
	even though the length of $(0,1/n)$ tends to zero. Thus boundedness in
	$L^1$ permits concentration of mass and does not produce a temporal
	modulus that is uniform over the family.
	
	The natural replacement is uniform integrability. Let $X$ be a Banach
	space and let
	\begin{displaymath}
		\mathcal F\subset L_g^1([a,b);X).
	\end{displaymath}
	The family $\mathcal F$ is called uniformly integrable if it is bounded
	in $L_g^1([a,b);X)$ and, for every $\varepsilon>0$, there exists
	$\delta>0$ such that
	\begin{displaymath}
		\mu_g(E)<\delta
	\end{displaymath}
	implies
	\begin{displaymath}
		\sup_{f\in\mathcal F}
		\int_E\|f(s)\|_X\,\mathrm{d}\mu_g(s)
		<\varepsilon
	\end{displaymath}
	for every Borel set $E\subset[a,b)$. Since $\mu_g$ is finite, this is
	equivalent to the tail condition
	\begin{displaymath}
		\lim_{R\to\infty}
		\sup_{f\in\mathcal F}
		\int_{\{\|f(s)\|_X>R\}}
		\|f(s)\|_X\,\mathrm{d}\mu_g(s)
		=0.
	\end{displaymath}
	For a family bounded in $L_g^1([a,b);X)$, define
	\begin{displaymath}
		\omega_{\mathcal F}(\eta)
		:=
		\sup_{f\in\mathcal F}
		\sup_{\substack{E\in\mathcal B([a,b))\\
				\mu_g(E)\leq\eta}}
		\int_E\|f(s)\|_X\,\mathrm{d}\mu_g(s),
	\end{displaymath}
	Then the family is uniformly integrable if and only if
	\begin{displaymath}
		\omega_{\mathcal F}(\eta)\longrightarrow0
		\qquad\text{as }\eta\downarrow0.
	\end{displaymath}
	Consequently, if the derivative family is uniformly integrable in
	$L_g^1(B_1)$, then the power modulus in
	\eqref{eq:time-control-Aubin-Lions} can be replaced by
	\begin{displaymath}
		\|W_n(t)-W_n(r)\|_{B_1}
		\leq
		\omega_{\mathcal F}\bigl(\mu_g([r,t))\bigr).
	\end{displaymath}
	This again tends to zero with the local clock mass. Such a hypothesis is
	therefore a natural candidate for treating the endpoint $p_1=1$.
	Alternative endpoint arguments may instead be based on a suitable
	translation criterion; compare \cite{Simon1987}.
	
	We next consider the state exponent. For
	\begin{displaymath}
		1<p_0<\infty,
	\end{displaymath}
	the reflexivity of $B_0$ implies the reflexivity of
	$L_g^{p_0}([a,b);B_0)$. Hence a bounded sequence of state components has
	a weakly convergent subsequence. At $p_0=1$, the space
	$L_g^1([a,b);B_0)$ is generally not reflexive, even if $B_0$ is
	reflexive, and boundedness alone no longer implies relative weak
	compactness. The concentration sequence displayed above already shows
	this obstruction in the scalar case.
	
	A Dunford--Pettis type hypothesis is a condition designed to replace
	reflexivity at this endpoint. In the scalar space $L^1$, the classical
	Dunford--Pettis theorem states that a bounded family is relatively weakly
	compact if and only if it is uniformly integrable; see
	\cite[Ch.~III, Sect.~2]{DiestelUhl1977}. For Bochner-valued spaces
	$L^1(\mu;X)$, uniform integrability is always necessary, but for a general
	Banach space $X$ it is not by itself a complete characterization of weak
	compactness. A precise vector-valued criterion is given in
	\cite[Theorem~2.1]{DiestelRuessSchachermayer1993}: in addition to uniform
	integrability, it requires an appropriate almost-everywhere weak
	compactness condition for convex combinations of sequences from the
	family.
	
	In the present theorem the range spaces $B_0$ and $B_1$ are assumed to be
	reflexive. In this reflexive setting, uniform integrability of a bounded
	family in $L_g^1(B_i)$ is a standard sufficient condition for its relative
	weak compactness. Thus, at $p_0=1$, a natural Dunford--Pettis type
	replacement for the reflexive $L^{p_0}$ argument would be to assume that
	the state family $\mathcal U\subset L_g^1(B_0)$ satisfies
	\begin{displaymath}
		\sup_{u\in\mathcal U}\|u\|_{L_g^1(B_0)}<\infty
	\end{displaymath}
	and
	\begin{displaymath}
		\lim_{\delta\downarrow0}
		\sup_{u\in\mathcal U}
		\sup_{\substack{E\in\mathcal B([a,b))\\
				\mu_g(E)\leq\delta}}
		\int_E\|u(s)\|_{B_0}\,\mathrm{d}\mu_g(s)
		=0.
	\end{displaymath}
	Under such a condition, one can recover the weakly convergent subsequence
	that boundedness in $L_g^{p_0}(B_0)$ provides automatically when
	$p_0>1$.
	
	The same distinction applies to the derivative component. If $p_1>1$ and
	$B_1$ is reflexive, boundedness in $L_g^{p_1}(B_1)$ gives weak compactness
	automatically. If $p_1=1$, uniform integrability of the derivatives would
	serve two purposes: it would provide the vanishing temporal modulus above
	and, because $B_1$ is reflexive, it would also yield relative weak
	compactness in $L_g^1(B_1)$.
	
	These observations indicate plausible hypotheses for endpoint variants,
	but they do not constitute an endpoint theorem. A complete result would
	still have to verify that the graph is stable under the resulting weak
	convergence and that the final compactness argument remains valid with the
	modified modulus. The assumptions
	\begin{displaymath}
		1<p_0,p_1<\infty
	\end{displaymath}
	and the reflexivity of $B_0$ and $B_1$ avoid these additional issues and
	lead to a transparent statement. No claim that these assumptions are
	optimal is intended.

\section{Conclusions}\label{sec:conclusions}

The fundamental theorem obtained here places Stieltjes absolute continuity
on the same footing as ordinary Banach-valued absolute continuity, provided
the range has the Radon--Nikod\'ym property.  The vector-measure proof also
shows that the density of the variation is exactly the norm of the strong
$g$-derivative.  This identity is useful when an a priori estimate is first
available at the level of total variation.

The compactness theorem requires one structural adjustment.  A terminal
atom may hide the final increment of a left-continuous trajectory from its
Bochner state class; the natural object is therefore the graph of
state--derivative pairs.  Its state projection is compact when
$B_0\Subset B\hookrightarrow B_1$, $1<p_0,p_1<\infty$, and the two outer
spaces are reflexive.  The proof reflects the measure itself: atoms are dealt
with by bounded evaluation, nonatomic points by local averages, and
Ehrling's inequality joins the resulting $B_1$ convergence to the required
$B$ convergence.

Ordinary time, arbitrary bounded time scales, summable weighted atomic
clocks, and clocks combining Lebesgue, Cantor and atomic measures all fit the
same statement.  The resulting strong convergence is precisely the form
needed to pass to many continuous nonlinear terms in weak evolution
problems, as recorded in Corollary~\ref{cor:nonlinear-passage}.  The
measure-theoretic formulation treats all components of the clock in one
extraction and avoids an inverse-time construction, which would be
unavailable in the presence of plateaus or jumps.

In the original proof, an $L_g^1$ bound on the derivative alone does not
yield a vanishing temporal modulus; uniform integrability is one sufficient
replacement. At $p_0=1$, uniform integrability of the states is a sufficient
way to recover that proof's weak compactness step under the stated reflexivity
assumptions. These observations concern those proof mechanisms and do not
assert that such additional hypotheses are necessary for compactness.
Endpoint and generalized formulations are not developed here.

\section*{CRediT authorship contribution statement}

Francisco J. Fern\'andez: Conceptualization, Methodology, Formal analysis,
Investigation, Writing--original draft, Writing--review and editing.

\section*{Funding}

The author was supported by the Xunta de Galicia through the project
``Consolidaci\'on e Estruturaci\'on 2023 GRC GI-1561---Ecuaci\'ons
diferenciais non lineais (EDNL).''

\section*{Declaration of competing interest}

The author declares that he has no known competing financial interests
or personal relationships that could have appeared to influence the work
reported in this paper.

\section*{Data availability}

No data were used or generated for the research described in this article.

\section*{Declaration of generative AI and AI-assisted technologies in the
writing process}

During the preparation of this manuscript, the author used OpenAI ChatGPT to
improve readability and language and to assist with manuscript organization.
After using this tool, the author reviewed and edited the content as needed,
independently checked the mathematical arguments and references, and takes
full responsibility for the content of the article.

\end{document}